\newcommand{\e}{\varepsilon}
\newcommand{\NN}{\mathbb{N}}
\newcommand{\ZZ}{\mathbb{Z}}
\renewcommand{\S}{\mathcal S}
\renewcommand{\wr}{\operatorname{wr}}
\renewcommand{\d}{{\rm d}}
\renewcommand{\ll }{\langle\hspace{-.7mm}\langle }
\newcommand{\rr }{\rangle\hspace{-.7mm}\rangle }
\newcommand{\lab}{{\bf Lab}}
\renewcommand{\d}{{\rm d}}
\newcommand{\G}{\mathcal G}
\renewcommand{\phi}{\varphi}
\DeclareMathOperator{\Area}{Area}
\DeclareMathOperator{\Rad}{rad}
\DeclareMathOperator{\Stab}{Stab}
\DeclareMathOperator{\Cay}{Cay}
\newcommand\blfootnote[1]{%
  \begingroup
  \renewcommand\thefootnote{}\footnote{#1}%
  \addtocounter{footnote}{-1}%
  \endgroup
}
\newtheorem{thm}{Theorem}[section]
\newtheorem{cor}[thm]{Corollary}
\newtheorem{lem}[thm]{Lemma}
\newtheorem{prop}[thm]{Proposition}
\newtheorem{prob}[thm]{Problem}
\theoremstyle{definition}
\newtheorem{defn}[thm]{Definition}
\theoremstyle{remark}
\newtheorem{rem}[thm]{Remark}
\newtheorem{ex}[thm]{Example}
\newcommand{\la}{\langle}
\newcommand{\ra}{\rangle}
\newcommand{\Lab}{{\bf Lab}}
\newfont{\eufm}{eufm10}
\renewcommand{\tilde}{\widetilde}
\begin{document}

\title{\vspace*{-10mm} Isoperimetric inequalities in finitely generated groups}

\author{D. Osin \, E. Rybak}

\date{}
\maketitle
\vspace{-10mm}
\begin{abstract}\blfootnote{\textbf{MSC} Primary: 20F65, 20F69. Secondary: 20F67, 20F50}
To each finitely generated group $G$, we associate a quasi-isometric invariant called the \emph{Dehn spectrum} of $G$. If $G$ is finitely presented, our invariant is closely related to the Dehn function of $G$, yet provides more
information by encoding the isoperimetric behaviour of $G$ at various scales. The main goal of this paper is to initiate the study of the Dehn spectrum of finitely generated (but not necessarily finitely presented) groups.  In particular, we compute the Dehn spectrum of small cancellation groups, certain wreath products, and free Burnside groups of sufficiently large odd exponent. We also address several natural questions concerning the structure of the poset of Dehn spectra. As an application, we show that there exist $2^{\aleph_0}$ pairwise non-quasi-isometric finitely generated groups of finite exponent.
\end{abstract}

\tableofcontents

\section{Introduction}
We begin by recalling the definition of the Dehn function of a finitely presented group. Let
\begin{equation}\label{Eq:Gpr}
G=\langle X\mid \mathcal R\rangle
\end{equation}
be a finite presentation of a group $G$. Thus, $G=F(X)/\ll \mathcal R\rr$, where $F(X)$ is the free group with the basis $X$ and $\ll \mathcal R\rr $ denotes the minimal normal subgroup containing the subset $\mathcal R \subseteq F(X)$. Every element $w\in \ll \mathcal R\rr$ admits a decomposition of the form
\begin{equation}\label{wprod}
w=\prod\limits_{i=1}^\ell f_i^{-1}R_i^{\e_i} f_i,
\end{equation}
where $f_i \in F(X)$, $R_i\in \mathcal R$, and $\e_i=\pm 1$ for all $i$. The minimal number $\ell$ over all such decompositions is denoted by $\Area_{\mathcal R}(w)$. The \emph{Dehn function of the presentation (\ref{Eq:Gpr})} is defined by the formula
$$
\delta_{G}(n)=\max\{ \Area_{\mathcal R}(w) \mid w\in \ll \mathcal R\rr, \; \| w\|\le n\},
$$
where $\| w\| $ denotes the length of the word $w$. In particular, every $w\in \ll \mathcal R\rr$ satisfies the \emph{isoperimetric inequality} $\Area_{\mathcal R}(w)\le \delta_{G}(\| w\|)$.

In the 1980s, Gromov \cite{Gro} characterized hyperbolic groups in terms of linear isoperimetric inequality. Ever since, the study of the Dehn function of finitely presented groups has been one of the central themes in geometric group theory. Research in this direction is strongly motivated by connections to geometric and algorithmic properties of groups. In particular, for every finitely presented group $G$, the following hold.

\begin{enumerate}
\item[(a)] Up to a natural equivalence relation, $\delta_{G}$ is independent of the choice of a particular finite presentation of $G$. Moreover, the equivalence class of the Dehn function is a quasi-isometric invariant of $G$.

\item[(b)] The equivalence class of $\delta_{G}$ can be used to detect the decidability of the word problem. Namely, the word problem in $G$ is decidable if and only if $\delta_{G}$ is equivalent to a recursive function.
\end{enumerate}

The Dehn function can be defined for any group presentation with a finite set of generators. However, properties (a) and (b) only hold for finitely presented groups. Moreover, the decidability of the word problem is not invariant under quasi-isometries of finitely generated groups \cite{Ers}. Thus, any invariant of finitely generated groups will unavoidably lack one of the above-mentioned properties.

In \cite{GI}, Grigorchuk and Ivanov suggested several analogs of the Dehn function whose computability for a (possibly infinite) recursive group presentation is equivalent to the decidability of the word problem. However, the functions considered in \cite{GI} are not quasi-isometric invariants; moreover, they depend on the choice of a particular presentation.

In this paper, we take a different approach and introduce the concept of a \emph{Dehn spectrum} of a finitely generated group, which enjoys quasi-isometric invariance. The Dehn spectrum is a function of three variables that generalizes the notion of the ordinary Dehn function to infinitely presented groups and refines it in the finitely presented case. In fact, we will work in the more general category of graphs but restrict our discussion to finitely generated groups in the introduction.

\begin{defn}\label{Def:main}
Let $G$ be a group generated by a finite set $X$. In particular, we have $G=F(X)/N_X$, for some normal subgroup $N_X$ of $F(X)$. For every $k\in \NN$, let $\mathcal S_k$ denote the set of all words of length at most $k$ in the alphabet $X\cup X^{-1}$ representing $1$ in $G$; that is,
\begin{equation}\label{Sk}
\S_k=\{w\in N_X \mid \| w\|\le k\}.
\end{equation}
The \emph{Dehn spectrum of $G$ with respect to $X$} is the function defined on the set of all triples $(k,m,n)\in \NN^3$ satisfying the inequality $m\ge k$ by the formula
\begin{equation}\label{fkmn}
f_{G,X}(k,m,n)=\max\{ \Area_{\S_m}(w) \mid w\in \ll \S_{k} \rr,\; \|w\|\le n\}.
\end{equation}
Note that $\Area_{\S_m}(w)$ is well-defined since $m\ge k$ and $w\in \ll \S_{k}\rr \le \ll \S_m\rr$.
\end{defn}

We consider an important particular case.

\begin{rem}\label{Ex:fp}
Suppose that a group $G$ is finitely presented, that is, $N_X=\ll \mathcal{R} \rr$ for some finite subset $\mathcal{R}$ of $N_X$. Let $M=\max\{ \| R\| \mid R\in \mathcal R\}$. Then $\ll \S_M\rr =\ll \S_{M+1}\rr =\ldots $ and for every $k\ge M$, we have $f_{G,X}(k,m,n)=\delta _{\langle X\mid \mathcal S_m\rangle }(n)$, where $\delta _{\langle X\mid \mathcal S_m\rangle }$ is the Dehn function of $G$ computed using the finite presentation $\langle X\mid \mathcal S_m\rangle $. Thus, the Dehn spectrum provides more information about $G$ than any single Dehn function; in fact, it encodes the whole spectrum of the isoperimetric behaviour of $G$ at various scales, hence the name.
\end{rem}

We denote by $\mathcal F$ the set of all functions $\{ (k,m,n)\in \NN^3\mid m\ge k\} \to \mathbb R$ that are non-decreasing in the first and third arguments, and non-increasing in the second argument. Clearly, $f_{G,X}\in \mathcal F$ for any group $G$ generated by a finite set $X$.

\begin{defn}\label{Def:ord}
Given two functions $f, g\colon \NN^3\to \mathbb R$, we write $f\preccurlyeq g$ if there exists $C\in \mathbb N$ such that
$$
f(k, Cm, n)\le Cg(Ck, m, Cn) + Cn/m + C
$$
for all $(k, m, n)\in \NN^3$ such that $m\ge Ck$. Further, we write $f\sim g$ and say that $f$ and $g$ are \emph{equivalent} if $f\preccurlyeq g$ and $g\preccurlyeq f$.
\end{defn}

It is straightforward to verify that $\sim$ is indeed an equivalence relation on the set $\mathcal F$. Our first result establishes the quasi-isometric invariance of the Dehn spectrum.

\begin{thm}\label{Thm:QI}
Let $G$ and $H$ be groups generated by finite sets $X$ and $Y$, respectively. If $G$ and $H$ are quasi-isometric, then $f_{G,X} \sim f_{H,Y}$. In particular, the equivalence class of the Dehn spectrum of a finitely generated group is independent of the choice of a particular generating set.
\end{thm}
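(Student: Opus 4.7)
The plan is to prove the theorem in two stages. First, I would establish that the isoperimetric spectrum is independent of the choice of finite generating set within a single group; this isolates the combinatorial core of the argument and reveals why Definition \ref{Def:ord} must take exactly the form stated. Second, I would extend the argument to quasi-isometric groups by replacing letter-wise substitutions with path-substitutions induced by the quasi-isometry.

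For the first stage, I would fix finite generating sets $X, X'$ of $G$ and a constant $C\ge 1$ such that every element of $X\cup X^{-1}$ is represented in $G$ by a word of length at most $C$ over $X'\cup X'^{-1}$ and vice versa. The resulting homomorphisms $\sigma\colon F(X)\to F(X')$ and $\tau\colon F(X')\to F(X)$ preserve $G$-values and inflate word length by at most a factor of $C$. If $w \in \ll \S_k\rr$ over $X$ has length at most $n$, then $\sigma(w)$ lies in $\ll \S_{Ck}\rr$ over $X'$ and has length at most $Cn$; moreover, any $\S_m$-decomposition of $\sigma(w)$ pushes forward under $\tau$ to an $\S_{Cm}$-decomposition of $\tau(\sigma(w))$ with the same number of conjugates. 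What remains is to bound the cost of correcting from $\tau(\sigma(w))$ back to $w$ in $F(X)$.

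The key idea---and the origin of the $Cn/m$ summand in Definition \ref{Def:ord}---is to fill the correction loop $w\cdot \tau(\sigma(w))^{-1}$ efficiently by grouping letters into blocks. Writing $w = B_1\cdots B_t$ with each $B_j$ of length $\lfloor m/(C^2+1)\rfloor$, so $t = O(n/m)$, each loop $B_j \cdot \tau(\sigma(B_j))^{-1}$ has length at most $m$ and therefore lies in $\S_m$. A standard telescoping identity in $F(X)$ then expresses $w\cdot \tau(\sigma(w))^{-1}$ as a product of exactly $t$ conjugates of these block-relators. Combining with the pushed-forward decomposition of $\tau(\sigma(w))$ yields $Area_{\S_{Cm}}(w) \le Area_{\S_m}(\sigma(w)) + O(n/m) + O(1)$, and passing to the supremum over admissible $w$ gives $f_{G,X}\preccurlyeq f_{G,X'}$ in the precise sense of Definition \ref{Def:ord}; the symmetric inequality delivers $\sim$.

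For the quasi-isometry stage, given a $(\lambda,c)$-quasi-isometry $\phi\colon G\to H$ with $\phi(1_G)=1_H$ and a quasi-inverse $\bar\phi$ with $\bar\phi(1_H)=1_G$, I would choose for each pair $(g, x)\in G\times (X\cup X^{-1})$ a word $s_{g,x}$ over $Y\cup Y^{-1}$ of length at most $\lambda+c$ representing $\phi(g)^{-1}\phi(gx)$, and similarly for $\bar\phi$. These data assemble into a path-substitution $\Phi$ sending a loop $w$ at $1_G$ to a loop $\Phi(w)$ at $1_H$ of length at most $(\lambda+c)\|w\|$, together with a pull-back $\Psi$ in the other direction. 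The preceding two paragraphs then go through with $\Phi,\Psi$ in place of $\sigma,\tau$, once one inserts short correction paths (of length bounded by the QI constants) at each block boundary to absorb the shift $\Psi\circ\Phi\neq \mathrm{id}$. The main obstacle here is precisely this path-dependency: $s_{g,x}$ depends on the prefix $g$, not only on the letter $x$. One must verify that the blocking argument still produces block-relators of length $O(m)$ and only $O(n/m)$ of them; this works because $\Psi\circ\Phi$ is at bounded $G$-distance from the identity on vertices, so each block can be closed off by appending two short correction paths at its endpoints, leaving the counting in the previous paragraph essentially unchanged.
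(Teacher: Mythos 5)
Your proposal is correct in substance and rests on the same two pillars as the paper's proof: a round trip through a quasi-isometry and its coarse inverse, and the observation that the correction loop between $w$ and its round-trip image can be filled by $O(n/m)$ relators of length $O(m)$ obtained by cutting $w$ into blocks --- this blocking is exactly where the $Cn/m$ term of Definition \ref{Def:ord} comes from in the paper as well. The difference is one of framework: the paper defines combinatorial $k$-fillings of loops in arbitrary graphs (Definition \ref{Def:k-fil}), proves quasi-isometry invariance at that level (Theorem \ref{Thm:QIG}) via coarse images of paths, and then identifies $f_{Cay(G,X)}$ with $f_{G,X}$ (Proposition \ref{Prop:f=f}); your block loops $B_j\,\tau(\sigma(B_j))^{-1}$ and the telescoping identity are the word-level counterparts of the faces $K_i$ bounded by $t_is_it_{i+1}^{-1}d_i^{-1}$ glued in between $\tilde D$ and the outer loop in that proof. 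Your version buys a self-contained, purely algebraic treatment of the change-of-generators case; the paper's version buys a statement valid for all connected graphs (hence for quasi-geodesic spaces).

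The one step you should not wave through is the push-forward of decompositions in the quasi-isometry stage. Since $\Phi$ and $\Psi$ are path-substitutions rather than homomorphisms, you cannot substitute them into a product of conjugates the way you substitute $\sigma$ and $\tau$; this affects both the claim that $\Phi(w)\in\langle\langle\S_{O(k)}\rangle\rangle$ over $Y$ and the claim that an $\S_m$-decomposition transports with the same number of factors. Your remarks only address the path-dependency in the blocking step, not here. The standard repair is diagrammatic: take a van Kampen diagram over $\langle X\mid \S_m\rangle$ with boundary label $w$, map its $1$-skeleton into $Cay(H,Y)$ by sending each vertex to its $\phi$-image and each edge to the chosen path $s_{g,x}$; every face of perimeter at most $m$ then bounds a loop of length at most $(\lambda+c)m$, i.e.\ a relator in $\S_{(\lambda+c)m}$ over $Y$, and the number of faces is unchanged. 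This is precisely the content of the paper's Lemma \ref{Lem:indf}. With that inserted, your argument is complete.
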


\begin{rem}
A thoughtful reader may wonder why we defined the Dehn spectrum of a group as a function of three variables. Indeed, it might be tempting to consider the function $g_{G,X}(k,n)=f_{G,X}(k,k,n)$. The reason behind our choice of the more complicated definition is that the function $g_{G,X}$ may not satisfy any reasonable analog of Theorem~\ref{Thm:QI}.
\end{rem}

In this paper, we often consider Dehn spectra up to the equivalence relation introduced above. In these cases, we omit the generating set from the notation and simply denote the Dehn spectrum of a finitely generated group $G$ by $f_G$.

As we have already seen in Remark \ref{Ex:fp}, if $G$ is a finitely presented group, the Dehn spectrum of $G$ essentially reduces to a function of two variables closely related to the Dehn function of $G$. More precisely, we say that a function $f\in \mathcal F$ is \emph{essentially independent} of $k$ if $f(k, m,n)$ is equivalent to a function that depends on $m$ and $n$ only. We prove the following.

\begin{thm}\label{Thm:FP}
For every finitely presented group $G$, the function $f_G(k,m,n)$ is essentially independent of $k$ and we have
\begin{equation}\label{fpspectrum}
\frac{\delta_G(n)}{\delta_G(m)}\preccurlyeq f_G(k,m,n)\preccurlyeq\frac{\delta_G(n)}m,
\end{equation}
where $\delta_G(n)$ is the Dehn function of some finite presentation of $G$.
\end{thm}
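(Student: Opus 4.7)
My plan is to fix a finite presentation $G = \langle X \mid \mathcal R\rangle$, set $M := \max\{\|R\| : R \in \mathcal R\}$ and $\delta := \delta_G$, and introduce the auxiliary function $g(k,m,n) := \delta_{\langle X\mid\S_m\rangle}(n)$, which is manifestly independent of $k$. By Example~\ref{Ex:fp}, $f_G(k,m,n) = g(k,m,n)$ for all $k \geq M$. To verify essential independence of $k$, I will check $f_G \sim g$ with the constant $C := M$ in Definition~\ref{Def:ord}: the hypothesis $m \geq Ck$ forces $Ck,Cm \geq M$, so by Example~\ref{Ex:fp} every evaluation of $f_G$ or $g$ appearing in the two required inequalities reduces to a Dehn function $\delta_{\langle X\mid \S_{\ast}\rangle}$, and both directions then follow from elementary monotonicity of these Dehn functions (non-increase in the relator set, non-decrease in $n$) together with the inclusion $\ll \S_k\rr \subseteq \ll \mathcal R\rr$ that handles the case $k < M$.

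For the upper bound $f_G \preccurlyeq \delta(n)/m$, after the above reduction it suffices to establish
\[
\delta_{\langle X \mid \S_m\rangle}(n) \;\leq\; C_0\,\frac{\delta(n)}{m} + C_0 \qquad (m \geq M)
\]
for some $C_0 = C_0(M)$. Given a word $w$ of length $\leq n$ representing $1$ in $G$, I will take a van Kampen diagram $D$ over $\mathcal R$ with $A := Area_{\mathcal R}(w) \leq \delta(n)$ cells (each of perimeter $\leq M$) and tile $D$ by at most $\lceil AM/m\rceil$ simply connected subdiagrams containing at most $\lfloor m/M\rfloor$ cells each, hence of perimeter $\leq m$. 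Each subdiagram's boundary label then lies in $\S_m$, and the whole collection assembles into a $\S_m$-van Kampen diagram for $w$ of area $O(A/m) = O(\delta(n)/m)$. The tiling itself will come from choosing a spanning tree of the $1$-skeleton of $D$, passing to its planar dual cotree (a spanning tree of the dual graph of $D$), and greedily partitioning this cotree into subtrees of size $\lfloor m/M\rfloor$ by a rooted DFS, absorbing any cells enclosed by a subtree so as to keep every cluster simply connected.

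For the lower bound $\delta(n)/\delta(m) \preccurlyeq f_G$, I will use that every $S \in \S_m$ satisfies $Area_{\mathcal R}(S) \leq \delta(m)$ by definition of the Dehn function. Expanding an optimal decomposition $w = \prod_{i=1}^{\ell} f_i S_i^{\e_i} f_i^{-1}$ with $\ell = Area_{\S_m}(w)$ by replacing each $S_i$ with its optimal $\mathcal R$-decomposition yields $Area_{\mathcal R}(w) \leq \ell\,\delta(m)$, so $Area_{\S_m}(w) \geq Area_{\mathcal R}(w)/\delta(m)$. Specializing to $w$ of length $\leq n$ realizing $Area_{\mathcal R}(w) = \delta(n)$, and noting $w \in \ll\S_k\rr = \ll\mathcal R\rr$ for $k \geq M$, gives $f_G(k,m,n) \geq \delta(n)/\delta(m)$ whenever $k \geq M$; essential independence then promotes this to the required $\preccurlyeq$-inequality. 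I expect the main obstacle to be the planar tiling step in the upper bound: a naive subtree of the dual cotree can correspond to an annular union of cells enclosing cells outside the cluster, so the cluster's boundary is disconnected and fails to define a single element of $\S_m$. The remedy is to absorb all enclosed cells, and the care lies in verifying that this absorption keeps the total cell count at $O(A/m)$, with any additive $O(n/m)$ overhead comfortably absorbed by the slack $Cn/m + C$ in Definition~\ref{Def:ord}.
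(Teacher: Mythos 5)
Your lower bound and the reduction to $k$-independence are correct and coincide with what the paper does: the paper also expands an optimal $\S_m$-decomposition relator by relator to get $\delta_G(n)\le f_{G,X}(\cdot,m,n)\,\delta_G(m)$, and the identification $f_G(k,m,n)=\delta_{\langle X\mid\S_m\rangle}(n)$ for $k\ge M$ is exactly Example~\ref{Ex:fp}. (One small point you elide: for $\delta_G(n)/\delta_G(m)$ to make sense one needs $\delta_G(m)>0$ for all $m$, which the paper arranges by adding a redundant relator; see condition $(\ast\ast)$ and Corollary~\ref{Cor:fp}.)

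For the upper bound your route is genuinely different from the paper's, and the obstacle you flag is the real one, but your proposed remedy does not yet address it. The paper never tiles a single fixed diagram: it triangulates the presentation, runs an induction on the potential $\theta(w)=15\,Area_{\mathcal S_3}(w)+\|w\|$, invokes Papasoglu's cutting lemma only when $m\ge r(\Delta)$, and otherwise cuts off one deep subdiagram of area $\ge m/3$ and perimeter $\le 4m$ and re-chooses a minimal diagram for the shorter word. Your dual-tree clustering can be pushed through, but two points must be supplied. First, the bare claim that a spanning tree with $A$ vertices splits into $O(A/s)$ connected subtrees of size $\le s$ is false without a degree bound (a star needs $A-1$ singletons); you must use that the dual graph has degree at most $M$ because every face has at most $M$ sides. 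Second, and more seriously, ``absorbing enclosed cells'' is not a matter of controlling the \emph{cell count} (the perimeter of a filled cluster is still bounded by the perimeters of the original cluster's cells, so that is not where the danger lies); the issue is that after filling holes the clusters may swallow one another, and you must show the filled regions form a laminar (nested or interior-disjoint) family so that, after discarding non-maximal ones, the survivors tile $\Delta$ and define a genuine van Kampen diagram over $\S_m$. This follows from the fact that a dual-graph-connected cluster disjoint from $U_i$ lies in a single complementary component of $U_i$, but that argument is the actual content of the step and is absent from your sketch; you should also account for the $1$-dimensional parts of $\Delta$ joining face-components, which is where the additive $O(n/m)$ term genuinely enters. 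As written, the upper bound is a plausible plan with its central step unproved.
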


\begin{rem}\label{Rem:pres}
Here we think of the lower and upper bounds in (\ref{fpspectrum}) as functions of triples $(k,m,n)\in \NN^3$ independent of $k$. We need to be careful with the choice of a finite presentation of $G$ to make sure that $\delta_G(n)/\delta_G(m)$ is well-defined. However, both inequalities in (\ref{fpspectrum}) hold for any finite presentation of $G$ such that the corresponding Dehn function satisfies $\delta_G(m)>0$ for all $m\in \NN$  (see Corollary \ref{Cor:fp}.)
\end{rem}

Despite being closely related to the Dehn function, the Dehn spectrum of a finitely presented group $G$ is a finer invariant and is not determined by $\delta_G$ up to the $\sim$ relation as is shown in Proposition \ref{Prop:AC}. 

We discuss several particular examples illustrating various behaviours of the Dehn spectrum of finitely presented groups.
If $G$ is hyperbolic, the lower and upper bounds in (\ref{fpspectrum}) coincide. Conversely, the equivalence $f_G(k,m,n)\sim n/m$ for a finitely presented group $G$ implies hyperbolicity of $G$ (see Remark \ref{Ex:fp}).

\begin{defn}
We say that a finitely generated group $G$ has \emph{linear Dehn spectrum} if $f_G(k,m,n)\sim n/m$.
\end{defn}

\begin{cor}\label{Cor:Hyp}
A finitely presented group is hyperbolic if and only if its Dehn spectrum is linear.
\end{cor}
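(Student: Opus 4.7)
The plan is to derive both directions of this corollary from Theorem~\ref{Thm:FP} together with Gromov's classical characterization of hyperbolicity: a finitely presented group is hyperbolic if and only if its Dehn function is linear, i.e.\ $\delta_G(n)\sim n$. Both implications then reduce to inspecting the sandwich estimate (\ref{fpspectrum}).

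For the forward direction, I would start with a finite presentation of a hyperbolic $G$ for which $\delta_G(n)\sim n$. Regarded as functions of $(k,m,n)$ with $m\ge k$ and constant in $k$, both the lower bound $\delta_G(n)/\delta_G(m)$ and the upper bound $\delta_G(n)/m$ in (\ref{fpspectrum}) are equivalent to $n/m$. Applying Theorem~\ref{Thm:FP} then squeezes $f_G(k,m,n)$ into the same equivalence class, so the isoperimetric spectrum is linear.

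The backward direction needs a little more care. Assuming $f_G(k,m,n)\sim n/m$, the lower bound in (\ref{fpspectrum}) yields $\delta_G(n)/\delta_G(m)\preccurlyeq n/m$. Unwinding Definition~\ref{Def:ord}, I obtain a constant $C\in\NN$ such that
$$
\frac{\delta_G(n)}{\delta_G(Cm)} \;\le\; C\cdot\frac{Cn}{m} + C\cdot\frac{n}{m} + C
$$
whenever $m\ge Ck$, for a finite presentation chosen so that $\delta_G(m)>0$ for every $m\in\NN$ (available by the remark following Theorem~\ref{Thm:FP}). The decisive step is to specialize $k=1$ and freeze $m$ at a single constant value $m_0\ge C$; the inequality then collapses to $\delta_G(n)\le An+B$ for explicit constants $A,B$ depending only on $C$, $m_0$, and $\delta_G(Cm_0)$. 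Hence $\delta_G$ is at most linear, and therefore equivalent to $n$, so Gromov's theorem forces $G$ to be hyperbolic.

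I expect the main obstacle to be purely notational, namely converting the three-variable equivalence of Definition~\ref{Def:ord} into an honest one-variable bound on the Dehn function. The specialization trick above dispatches this cleanly, so no deeper difficulty is anticipated; the corollary is essentially a direct consequence of Theorem~\ref{Thm:FP} plus Gromov's classical result.
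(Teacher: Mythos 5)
Your argument is correct, and the forward direction coincides with the paper's (for hyperbolic $G$ the two sides of (\ref{fpspectrum}) both become $n/m$, via Lemma \ref{Lem:sl} and Lemma \ref{Lem:ab}). The backward direction, however, runs through a different mechanism than the paper's. The paper proves the stronger Corollary \ref{Cor:hyp} ($f_G\sim\alpha(n)/m$ for \emph{some} $\alpha$ already forces hyperbolicity): there one first substitutes $k=1$, $m=Cn$ to see that $\alpha(n)/n$ is bounded, deduces the upper bound $f_{G,X}(k,m,n)\le Dn/m+D$, and then invokes the identity from Example \ref{Ex:fp}, $\delta_{\langle X\mid \mathcal S_{DM}\rangle}(n)=f_{G,X}(M,DM,n)$, to conclude that this enlarged finite presentation has linear Dehn function. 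You instead exploit that the comparison function $n/m$ is known: you push the hypothesis through the \emph{lower} bound of (\ref{fpspectrum}) (using transitivity of $\preccurlyeq$ from Proposition \ref{Prop:F} and a presentation with $\delta_G(m)>0$, as in Corollary \ref{Cor:fp}), and then freeze $m=m_0\ge C$, $k=1$ to collapse the three-variable inequality to $\delta_G(n)\le An+B$ for the original presentation. Both routes end with Gromov's characterization; yours is a bit more direct for the linear case but does not yield the more general $\alpha(n)/m$ statement. One cosmetic caveat: "hyperbolic iff $\delta_G(n)\sim n$" should be read as "satisfies a linear isoperimetric inequality" (the Dehn function of a free or finite group is not bounded below by $n$ without the $+Cn$ term built into $\precapprox$); the implication you actually use, namely $\delta_G(n)\le An+B$ implies hyperbolicity, is the correct one, so this does not affect the proof.
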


The right inequality in (\ref{fpspectrum}) is strict for any non-hyperbolic finitely presented group (see Corollary \ref{Cor:hyp}). In contrast, the left inequality often becomes an equivalence. For example, combining Theorem \ref{Thm:FP} with results of Papasoglu \cite{Pap}, we obtain the following.

\begin{cor}\label{Cor:Quad}
For any finitely presented group $G$ with quadratic Dehn function, we have $f_G(k,m,n)\sim (n/m)^2$.
\end{cor}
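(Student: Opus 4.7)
My strategy is to combine Theorem~\ref{Thm:FP} with a quantitative multi-scale filling result of Papasoglu~\cite{Pap}. Fix a finite presentation of $G$ whose Dehn function satisfies $\delta_G(n)\sim n^2$ and $\delta_G(m)>0$ for all $m\in\NN$ (possible by the remark following Theorem~\ref{Thm:FP}). The lower bound is then immediate from the left half of~(\ref{fpspectrum}): since $\delta_G(n)/\delta_G(m)\sim n^2/m^2=(n/m)^2$, we obtain $f_G(k,m,n)\succcurlyeq (n/m)^2$.

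For the upper bound, the estimate $f_G(k,m,n)\preccurlyeq\delta_G(n)/m\sim n^2/m$ from the right half of~(\ref{fpspectrum}) is off by a factor of $m$, so the naive application of Theorem~\ref{Thm:FP} is insufficient. To close this gap, I would invoke Papasoglu's theorem~\cite{Pap} in the quantitative form asserting the existence of a constant $C>0$ such that, for every integer $m\ge 1$ and every word $w$ of length $\le n$ representing $1$ in $G$, one can write
$$
w=\prod_{i=1}^{s} f_i^{-1}u_i f_i\quad\text{in } F(X),\qquad s\le C(n/m)^2+C,
$$
with $f_i\in F(X)$ and words $u_1,\dots,u_s$ of length $\le Cm$ each representing $1$ in $G$. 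Geometrically, this corresponds to chopping a van Kampen diagram for $w$ into $O((n/m)^2)$ sub-diagrams of perimeter $O(m)$, a combinatorial decomposition implicit in Papasoglu's proof of the simple connectedness of asymptotic cones of quadratic groups. Since each $u_i\in\S_{Cm}$, we deduce $Area_{\S_{Cm}}(w)\le C(n/m)^2+C$. Combining this with Example~\ref{Ex:fp} (for $k$ larger than the maximal length of a defining relator) and the fact that the rescaling $m\mapsto Cm$ on the second argument is absorbed by Definition~\ref{Def:ord}, we conclude $f_G(k,m,n)\preccurlyeq (n/m)^2$, which together with the lower bound gives the desired equivalence.

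The main obstacle is the correct quantitative invocation of Papasoglu's multi-scale decomposition, uniformly in the scale parameter $m$; once this is in hand, the remainder of the argument is routine bookkeeping with the relations $\preccurlyeq$ and $\sim$ from Definition~\ref{Def:ord}.
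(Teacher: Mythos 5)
Your proposal is correct and follows essentially the same route as the paper: the lower bound comes from the left inequality in (\ref{fpspectrum}), and the upper bound comes from exactly the quantitative decomposition of Papasoglu that the paper cites (cutting a minimal-area diagram of perimeter $n$ into at most $Ka^2$ subdiagrams of perimeter at most $n/a$, applied with $a=\lfloor n/m\rfloor$). The result you describe as ``implicit'' is in fact stated explicitly in \cite{Pap}, so the step you flag as the main obstacle is already available off the shelf.
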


In particular, we have $f_G(k,m,n)\sim (n/m)^2$ for any non-hyperbolic semihyperbolic group in the sense of Alonso and Bridson \cite{AB}.

In fact, it is not easy to construct a finitely presented group such that the left inequality in (\ref{fpspectrum}) is strict. The simplest construction of such examples is based on the following result, which is interesting on its own right.

\begin{prop}\label{Prop:AC}
For any finitely presented group $G$, the following conditions are equivalent.
\begin{enumerate}
\item[(a)] All asymptotic cones of $G$ are simply connected.
\item[(b)] There exists a function $g\colon \mathbb Q_+\to \mathbb R$ such that $f_G(k,m,n) \preccurlyeq g(n/m)$.
\item[(c)] There exists $\alpha\in (0, \infty)$ such that $f_G(k,m,n) \preccurlyeq (n/m)^\alpha $
\end{enumerate}
\end{prop}

In fact, this proposition follows from Gromov's characterization of groups with simply connected asymptotic cones in terms of the so-called loop division property (see \cite[Sec. 5.F]{Gro93} and \cite{Dru, Pap} for further details). As a corollary, we have  $f_G(k,m,n)\not\sim\delta_G(n)/\delta_G(m)$ for any finitely presented group $G$ with a polynomial Dehn function such that not all asymptotic cones of $G$ are simply connected. The first groups of this sort were constructed by Bridson \cite{Bri}, and a somewhat easier example was later found by Olshanskii and Sapir \cite{OS}. We use their example and Proposition \ref{Prop:AC} to provide examples of two finitely presented groups with cubic Dehn functions and inequivalent Dehn spectra (see Corollary \ref{Cor:H3}).

As we pass from finitely presented to finitely generated groups, many natural questions arise. In this paper, we discuss three of them, highlighting the striking difference between the finitely presented and finitely generated cases.
\begin{enumerate}
\item[(Q1)] Is every finitely generated group with linear Dehn spectrum hyperbolic?
\item[(Q2)] What is the cardinality of the set of equivalence classes of the Dehn spectrum of finitely generated groups?
\item[(Q3)] Does there exist a finitely generated group $G$ whose Dehn spectrum essentially depends on the first argument?
\end{enumerate}

We begin with the first question. The theorem below provides many examples of non-hyperbolic groups with linear Dehn spectrum.

\begin{thm}\label{Thm:Lin}
The following groups have a linear Dehn spectrum.
\begin{enumerate}
\item[(a)] Wreath products of the form $K{\,\rm wr\,} \mathbb Z$, where $K$ is an arbitrary finite group.
\item[(b)] Finitely generated (but not necessarily finitely presented) $C^\prime(1/6)$-groups.
\item[(c)] Finitely generated free Burnside groups of sufficiently large odd exponent.
\end{enumerate}
\end{thm}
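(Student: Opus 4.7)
The strategy in all three cases is to establish the upper bound $f_G(k,m,n)\preccurlyeq n/m$, since the matching lower bound is generic: after free reduction, each $\S_m$-conjugate $f_i^{-1} s_i f_i$ contributes at most $\|s_i\|\le m$ non-cancelling letters to the product $\prod_{i=1}^\ell f_i^{-1}s_i f_i$, so producing a freely reduced word of length $\sim n$ requires at least $\sim n/m$ such conjugates. For each class, I would prove that there exists $C$ such that every freely reduced $w$ of length $\le n$ trivial in $F(X)/\ll\S_k\rr$ admits a decomposition $w=\prod_{i=1}^\ell f_i^{-1}s_i f_i$ with $s_i\in\S_m$ and $\ell\le Cn/m+C$, whenever $m\ge Ck$. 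Geometrically, this amounts to decomposing a van Kampen diagram for $w$ into $O(n/m)$ subdiagrams, each of boundary length at most $m$.

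For part (b), starting from a $C'(1/6)$-presentation $\la X\mid\mathcal R\ra$, Greendlinger's lemma yields Dehn's algorithm with the quantitative refinement that applying a relator $R$ reduces the current word's length by at least $(1-3\lambda)|R|=\Omega(|R|)$. I would run Dehn's algorithm and bundle consecutive relator applications whose cumulative effect alters a common window of length $\le m$ into a single $\S_m$-conjugate. Each such bundled step removes $\Omega(m)$ from $\|w\|$, either because a long relator $R$ with $|R|>m$ already removes $\Omega(|R|)\ge\Omega(m)$ on its own, or because short-relator steps accumulate within a length-$m$ window until at least $m/2$ letters have been eliminated, producing the desired $\ell=O(n/m)$ bound. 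The main technical difficulty is to organize the batching so that successive bundles have disjoint windows on $w$, which is where the $C'(1/6)$-condition on piece lengths enters decisively.

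For part (a), I would use the presentation $\la K\cup\{t\}\mid \mathcal R(K),\;[k^{t^i},l^{t^j}]\;(i\ne j)\ra$ of $K\wre\mathbb Z$. A trivial word $w$ of length $n$ traces a closed walk on $\mathbb Z$ labelled at each visited vertex $p$ by a trivial $K$-word $u_p$ with $\sum_p|u_p|\le n$. The proof combines two ingredients: a telescoping identity expressing the long commutator $[k^{t^i},l^{t^j}]$ as a product of $O((|i|+|j|)/m)$ conjugates of length-$\le m$ commutators, and a bundling step grouping the $K$-reductions $u_p\to 1$ at vertices contained in a common length-$m$ window of $\mathbb Z$ into single $\S_m$-conjugates, so that the total cost scales as $\sum_p|u_p|$ divided by $m$, i.e., $O(n/m)$. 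For part (c), I would invoke Olshanskii's graded presentation of $B(r,n)$ for large odd $n$ and his theory of $A$-maps and contiguity subdiagrams: a reduced diagram decomposes into contiguity strips with almost-periodic boundaries, and running the rank induction up to the rank matching the scale $m$ refines every diagram for $w$ into $O(n/m)$ rank-$\le m$ pieces. The principal obstacle throughout is the conversion of a single application of a long relator of length $L>m$ into $O(L/m)$ applications of length-$\le m$ relators, accomplished by Greendlinger batching in (b), by commutator telescoping in (a), and by Olshanskii's rank induction in (c).
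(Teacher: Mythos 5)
Your reduction to the upper bound is right (and in fact the lower bound $n/m\preccurlyeq f$ is automatic from the additive $Cn/m$ term in Definition \ref{Def:ord}), but the three case-by-case batching arguments each omit or get wrong the step that carries the actual content. The paper proves all three parts through a single mechanism that your proposal lacks: each group is exhibited as a direct limit of hyperbolic groups $G_i$ with hyperbolicity constant $O(i)$ and with every trivial word of length $\le i$ already trivial in $G_i$ (Definition \ref{Def:wah}); then Corollary \ref{Cor:fhyp} — proved by passing to the $s$-expansion $\Gamma^s$, which is $O(1)$-hyperbolic by Lemma \ref{Lem:MO}, running the Dehn algorithm there, and pulling the filling back — shows that in a $\delta$-hyperbolic graph every loop of length $n$ admits an $m$-filling of area $O(\lceil n/m\rceil)$ once $m\ge 200\delta$. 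Your window-batching of Dehn's algorithm in (b) is an attempt to reprove exactly this statement by hand, and the step you flag as "the main technical difficulty" (forcing successive bundles into disjoint windows) is the whole theorem: consecutive Greendlinger subwords need not lie near one another, and without an argument the bundles may close after removing only $O(1)$ letters, giving $O(n)$ cells rather than $O(n/m)$. Note also that for $C'(1/6)$ the Greendlinger subword satisfies $\|U\|>\|R\|/2$, so a Dehn step shortens the word by $2\|U\|-\|R\|>0$, which is \emph{not} $\Omega(\|R\|)$ at $\lambda=1/6$.

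Two further concrete problems. In (a), the proposed telescoping identity is false: $[a,b^{t^d}]$ does not lie in the normal closure of the commutators $[x^{t^i},y^{t^j}]$ with $|i|,|j|\le e$ once $|d|$ is large compared to $e$ — in the truncated group $G_e$ of the paper (an HNN-extension of the finite group $K_{-e}\oplus\cdots\oplus K_e$), Britton's lemma shows $a$ and $b^{t^d}$ do not commute for $|d|\gg e$. The identity is also unnecessary, since Lemma \ref{Lem:k} shows a trivial word of length $\le k$ only needs relators with $|i|,|j|\le k=O(m)$; the cost you actually have to control is the number of scale-$m$ chunks needed to sort a length-$n$ word across a support of size up to $n$ on $\ZZ$, which your sketch does not address (the paper gets it from the $14k$-hyperbolicity of $G_k$, Lemma \ref{Lem:14k}, via the Bass--Serre tree and Proposition \ref{Prop:Tree}). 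In (c), "running the rank induction up to rank $m$ refines every diagram into $O(n/m)$ pieces of rank $\le m$" is not an available result in Olshanskii's theory; the paper must prove from scratch that $Cay(G_i,X)$ is $32Ni$-hyperbolic (Proposition \ref{33ni}), which is the substantive new work of that section, carried out by analyzing geodesic quadrilaterals and hexagons with contiguity subdiagrams, and only then does the general limit-of-hyperbolic-groups machinery apply.
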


Further, we show that ``generic" limits of hyperbolic groups provide the answers to (Q2) and (Q3).  More precisely, we consider the Polish space $\G_n$ of $n$-generated marked groups. Informally, $\G_n$ is the set of pairs $(G,X)$, where $G$ is a group and $X$ is an ordered generating set of $G$ of cardinality $n$, with the topology induced by the local convergence of Cayley graphs (see Section \ref{Sec:aaifgg} for details). The relation $\preccurlyeq$ introduced in Definition \ref{Def:ord} induces the following quasi-order on the spaces $\G_n$:
$$(G,X)\preccurlyeq _{\mathrm{Ds}} (H,Y)\;\;\; \Longleftrightarrow \;\;\; f_{G,X}\preccurlyeq f_{H,Y},$$
where $\mathrm{Ds}$ stands for the Dehn spectrum.

Recall that a hyperbolic group is said to be \emph{elementary} if it contains a cyclic subgroup of finite index. Let $\mathcal H_n$ denote the subset of $\G_n$ consisting of non-elementary hyperbolic marked groups, and let  $\overline{\mathcal H}_n$ be the closure of $\mathcal H_n$ in $\G_n$. As usual, we say that a certain property \emph{holds for a generic element} of a Polish space $P$ if it holds for every element of a comeager subset of $P$. Note that $\overline{\mathcal H}_n$ is a Polish space being a closed subset of the Polish space $\mathcal G_n$, and thus comeager subsets of $\overline{\mathcal H}_n$ are non-empty by the Baire category theorem.

\begin{thm}\label{Thm:HL}
For every integer $n\ge 2$, the following hold.
\begin{enumerate}
\item[(a)] The space $\overline{\mathcal H}_n$ contains $2^{\aleph_0}$ pairwise $\preccurlyeq_{\mathrm{Ds}}$--incomparable elements.
\item[(b)] For a generic element $(G, X)$ of $\overline{\mathcal{H}}_n$, the Dehn spectrum $f_{G,X}(k,m,n)$ is not equivalent to any function that depends on $m$ and $n$ only.
\end{enumerate}
\end{thm}

In particular, the set of equivalence classes of the Dehn spectrum of finitely generated groups has the cardinality of the continuum. Our proof of Theorem \ref{Thm:HL} is non-constructive and employs ideas from descriptive set theory. It is inspired by the proof of the fact that $\overline{\mathcal H}_n$ has $2^{\aleph_0}$ quasi-isometry classes presented in \cite{MOW}. Another ingredient used in the proof is group-theoretic Dehn filling.

Finally, we illustrate the usefulness of the Dehn spectrum by considering an application to groups of finite exponent. Recall that the famous Burnside problem asks whether every finitely generated group of a finite exponent $N$ is finite. The negative answer was first obtained by Novikov and Adian for all odd $N\ge 4381$ in the series of papers \cite{NA1, NA2, NA3}; an improved version of the original proof for odd $N\ge 665$ can be found in \cite{Adi}. In \cite{Ols83}, Olshanskii suggested a much simpler geometric proof for odd $N>10^{10}$. For all sufficiently large even exponents, the negative solution was obtained by Ivanov \cite{Iva} and Lysenok \cite{Lys}. More recently, Coulon \cite{Cou} provided yet another geometric proof following the ideas of Delzant and Gromov \cite{DG}; his proof explicitly uses hyperbolic geometry and works for all sufficiently large exponents, odd and even.

Despite this progress, very little is known about the large-scale geometry of finitely generated groups of finite exponent. Our last result shows that such groups can be very diverse from the geometric point of view. We state our theorem in a simplified form here and refer to Theorem \ref{Thm:main-full} for more details.

\begin{thm}\label{Thm:FE}
There exist $2^{\aleph_0}$ groups of finite exponent with pairwise incomparable Dehn spectra. In particular, there are $2^{\aleph_0}$ quasi-isometry classes of such groups.
\end{thm}

The existence of $2^{\aleph_0}$ pairwise non-isomorphic finitely generated groups of finite exponent was established by Olshanskii (see \cite[Theorem 28.7]{book}). However, the existence of at least 2 distinct quasi-isometry classes of infinite finitely generated groups of finite exponent seems to be new. The proof of Theorem \ref{Thm:FE} combines descriptive ideas similar to those used in the proof of Theorem~\ref{Thm:HL} with Olshanskii's small cancellation technique developed in \cite{Ols83,book}.

The paper is organized as follows. In Section \ref{Sec:EDBP}, we define the Dehn spectrum of a graph and prove a generalization of Theorem \ref{Thm:QI} in this context; Theorem \ref{Thm:FP} and its corollaries are also proved there. The first two parts of Theorem \ref{Thm:Lin} are obtained in Section~\ref{Sec:GwLS}.  Section \ref{Sec:DA} is devoted to the proof of Theorem \ref{Thm:HL}. The proofs of part (c) of Theorem \ref{Thm:Lin} and Theorem \ref{Thm:FE} are given in Section \ref{Sec:GFE}.

\paragraph{Acknowledgments.} The authors are grateful to O. Kulikova for the constant attention to this work. We would also like to thank the anonymous referees for careful reading of our manuscript and useful suggestions. The first author has been supported by the NSF grants DMS-1612473 and DMS-1853989.


\section{Basic properties of Dehn spectrum}\label{Sec:EDBP}


\subsection{Notation.} We begin by establishing the notation and terminology used throughout the paper. 

Let $G$ be a group generated by a set $X$. Given a word $w$ in the alphabet $X \cup X^{-1}$, we denote by $\|w\|$ (respectively, $|w|_X$) the number of letters in $w$ (respectively, the word length of the element of $G$ represented by $w$ with respect to the generating set $X$). For two (possibly non-reduced) words $u$ and $v$ in the alphabet $X\cup X^{-1}$, we write $u\equiv v$ if they are equal as words and $u=v$ if they represent the same element of the group.

Let $K$ be a $CW$-complex. By $K^{(n)}$ we denote the $n$-skeleton of $K$. If $K$ is $2$-dimensional, we use the terms \emph{vertices, edges,} and \emph{faces} for $0$-cells, $1$-cells, and $2$-cells of $K$, respectively. By a \emph{path} $p$ in a $CW$-complex $K$ we always mean a combinatorial path in $K^{(1)}$; we denote by $\ell(p)$, $p_-$, and $p_+$ its length, origin, and terminal vertex, respectively; $p$ is a \emph{loop} if $p_-$=$p_+$.

A \emph{graph} is a $1$-dimensional $CW$-complex. Thus, we allow graphs to have multiple edges as well as edges that join a vertex to itself. We use the notation $V(\Gamma)$ and $E(\Gamma)$ for the sets of vertices and edges of $\Gamma$, respectively. Throughout this paper, we think of graphs as metric spaces with respect to the length metric induced by identifying interiors of edges with the interval $(0,1)$.

Suppose that $\Gamma$ is a finite connected plane graph. Then the interior of every face $\Pi$ of $\Gamma$ is homeomorphic to a disc and the boundary of $\Pi$, denoted by $\partial \Pi$, is a loop in $\Gamma$. The boundary of the exterior face of $\Gamma$ is denoted by $\partial \Gamma$. Henceforth, the term \emph{face} will mean an interior face in this context. To every such graph, we can associate a planar $CW$-complex obtained by gluing a $2$-cell to the boundary of every (interior) face of $\Gamma$. This complex is connected and simply connected, but may not be homeomorphic to a disk.

We assume the reader to be familiar with the notions of a Cayley graph and a van Kampen diagram and refer to \cite[Chapter III]{LS} and \cite[Section 11]{book} for details. The Cayley graph of a group $G$ with respect to a generating set $X$ is denoted by $\Cay(G,X)$. If $p$ is a path in an oriented labelled graph, we denote by $\Lab(p)$ its label. Labels of boundaries of van Kampen diagrams and their faces are usually considered up to a cyclic shift and taking inverses. E.g., we write $\Lab (\partial \Delta)\equiv w$ for a van Kampen diagram $\Delta$ to mean that $w$ reads along $\partial \Delta $ starting from a certain point in an appropriate direction.


\subsection{Dehn spectrum of a graph}


\begin{defn} Let $\Gamma$ and $\Delta $ be any graphs. We say that $\phi\colon \Delta \to \Gamma$ is a \emph{generalized combinatorial map} if $\phi $ is continuous, maps vertices of $\Delta$ to vertices of $\Gamma$, and the image of every edge of $\Delta$ under $\phi $ is either a vertex or an edge of $\Gamma$.
\end{defn}

Note that every generalized combinatorial map sends paths to paths and loops to loops.

\begin{defn}\label{Def:k-fil}
Let $c$ be a loop in a graph $\Gamma$ and let $k\in \mathbb N$. A \emph{combinatorial $k$-filling} of $c$ is a generalized combinatorial map $\phi \colon D\to \Gamma$, where $D$ is a finite connected plane graph, such that the following conditions hold.
\begin{enumerate}
\item[(a)] There exists a vertex $o\in \partial D$ such that $\phi (o)=c_-=c_+$ and the loop $\phi(\partial D)$ starting at $\phi(o)$ coincides with $c$.
\item[(b)] For every (interior) face $F$ of $D$, we have $\ell(\phi(\partial F))\le k$.
\end{enumerate}
The \emph{area} of $\phi$, denoted by $\Area(\phi)$, is the number of faces of $D$. A loop $c$ in $\Gamma $ is said to be \emph{$k$-contractible} if it admits a combinatorial $k$-filling. The set of all $k$-contractible loops in $\Gamma $ will be denoted by $C_k(\Gamma)$. For $c\in C_k(\Gamma)$, we define $\Area_k(c)$ to be the minimum of areas of all combinatorial $k$-fillings of $c$.
\end{defn}

Two remarks are in order. First, we do not insist that $\partial D$ is a simple path. Second, it is important that $\phi(\partial D)$ and $c$ coincide as paths (not as subsets of $\Gamma$) in (a). E.g., if $c_0$ is any loop in $\Gamma$ and $c=c_0c_0$, then $\phi(\partial D)$ must also wind twice around $c_0$.

\begin{defn}\label{Def:IPG}
Let $\mathbb T=\{ (k,m,n)\in \mathbb N\times \mathbb N\times\mathbb N\mid m\ge k \}$. The \emph{Dehn spectrum} $f_\Gamma\colon \mathbb T \to \mathbb N\cup\{\infty\}$ of a connected graph $\Gamma $ is defined by the formula
$$
f_\Gamma (k,m,n)=\sup\{ \Area_{m}(c)\mid c\in C_k(\Gamma),\; \ell(c)\le n\}.
$$
Note that $\Area_{m}(c)$ is well-defined for $m\ge k$ since $C_k(\Gamma)\subseteq C_m(\Gamma)$. We say that $\Gamma $ has \emph{finitary} Dehn spectrum if $f_\Gamma $ takes only finite values.
\end{defn}

\begin{rem}
Instead of the combinatorial approach taken here, we could work with a more general notion of filling and area (see \cite[Section III.H.2]{BH} or \cite[Section 9.7]{DK}), which would allow us to define Dehn spectra of arbitrary metric spaces.
\end{rem}

\begin{lem}\label{Lem:finitary}
Every vertex-transitive graph of finite degree has a finitary Dehn spectrum. In particular, this is true for any Cayley graph of a group with respect to a finite generating set.
\end{lem}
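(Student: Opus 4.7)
The plan is to show that for each fixed triple $(k,m,n) \in \mathbb{T}$, only finitely many loops contribute to the supremum defining $f_\Gamma(k,m,n)$, and each such loop has a finite area.

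First, I would observe that graph automorphisms preserve combinatorial $k$-fillings in the obvious way: if $\alpha$ is an automorphism of $\Gamma$ and $\phi\colon D\to \Gamma$ is a combinatorial $k$-filling of a loop $c$, then $\alpha\circ \phi$ is a combinatorial $k$-filling of $\alpha\circ c$ of the same area. Using vertex-transitivity, fix a basepoint $v_0\in V(\Gamma)$ and, for every loop $c$ in $\Gamma$, choose an automorphism sending $c_-$ to $v_0$. This shows
$$
f_\Gamma(k,m,n)=\sup\{Area_m(c)\mid c\in C_k(\Gamma),\; c_-=v_0,\; \ell(c)\le n\}.
$$

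Next, let $d$ denote the (finite) degree of the vertices of $\Gamma$. At each step of a combinatorial path we have at most $d$ choices for the next oriented edge, so the number of combinatorial paths of length at most $n$ starting at $v_0$ is bounded by $\sum_{i=0}^{n} d^i<\infty$. In particular, the set of loops based at $v_0$ of length at most $n$ is finite.

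Finally, for each $c\in C_k(\Gamma)$, the quantity $Area_k(c)$ is a natural number by definition of $k$-contractibility. Since $m\ge k$, every combinatorial $k$-filling of $c$ is automatically a combinatorial $m$-filling (condition (b) in Definition~\ref{Def:k-fil} only becomes easier as the bound grows), so $Area_m(c)\le Area_k(c)<\infty$. Hence $f_\Gamma(k,m,n)$ is the maximum of finitely many finite values and is itself finite. The second assertion follows immediately, since $Cay(G,X)$ has degree $|X\cup X^{-1}|<\infty$ when $X$ is finite, and the action of $G$ on itself by left multiplication exhibits $Cay(G,X)$ as vertex-transitive. There is no real obstacle here; the argument is a direct finiteness count, and the only point requiring care is verifying that vertex-transitivity may be applied combinatorially (rather than only as an isometry of the underlying metric space).
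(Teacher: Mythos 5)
Your argument is correct and is essentially the paper's: the paper's one-line proof rests on the observation that there are only finitely many $Aut(\Gamma)$-orbits of loops of bounded length, which is exactly what your basepoint-plus-degree count establishes. Your explicit remarks that automorphisms preserve fillings and that $Area_m(c)\le Area_k(c)<\infty$ for $c\in C_k(\Gamma)$ just unpack what the paper leaves implicit.
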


\begin{proof}
The claim follows immediately from the observation that for every $n\in \NN$, there are only finitely many $Aut(\Gamma)$-orbits of loops of length $n$ in $\Gamma$.
\end{proof}

Recall that $\mathcal F$ denotes the set of all functions $f\colon \mathbb T\to \mathbb R$ such that $f$ is non-decreasing in the first and third arguments, and non-increasing in the second argument. It is not very difficult to see that $f_{\Gamma }\in \mathcal F$ for any graph $\Gamma$ whenever $f_\Gamma$ is finitary. As in the case of groups, we consider finitary Dehn spectra of graphs up to the equivalence relation introduced in Definition \ref{Def:ord}. For completeness, we record the following.

\begin{prop}\label{Prop:F}
\begin{enumerate}
\item[(a)] Let $\Gamma $ be a graph. If $f_\Gamma$ is finitary, then $f_\Gamma\in \mathcal F$.
\item[(b)] The relation $\preccurlyeq$ is a preorder and $\sim$ is an equivalence relation on $\mathcal F$.
\end{enumerate}
\end{prop}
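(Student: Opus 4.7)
The plan is to verify part (a) directly from the definitions and reduce part (b) to checking transitivity of $\preccurlyeq$, the only nontrivial assertion.

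For part (a), I want to read off each monotonicity from the corresponding definition. For the first argument, note that any $k_1$-filling is automatically a $k_2$-filling when $k_1\le k_2$ (the face-length constraint is only relaxed), so $C_{k_1}(\Gamma)\subseteq C_{k_2}(\Gamma)$; passing to the sup in \eqref{fkmn}-analogue for graphs shows $f_\Gamma(\cdot,m,n)$ is non-decreasing. For the second argument, the same observation gives $\mathrm{Area}_{m_2}(c)\le \mathrm{Area}_{m_1}(c)$ when $m_1\le m_2$, so the sup is non-increasing in $m$. For the third argument, enlarging $n$ just enlarges the set over which we take the sup. Finitariness is what is needed to land in $\mathcal F$ (rather than allow $\infty$), and is assumed.

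For part (b), reflexivity of $\preccurlyeq$ is immediate with $C=1$, and from this together with the definition of $\sim$ one gets reflexivity and symmetry of $\sim$ for free; both conclusions of part (b) then reduce to the transitivity of $\preccurlyeq$. So suppose $f\preccurlyeq g$ with constant $C_1$ and $g\preccurlyeq h$ with constant $C_2$. My plan is to unfold the two inequalities in sequence. Given $(k,m,n)$ with $m$ large enough, apply the $f\preccurlyeq g$ inequality to the triple $(k,C_2 m,n)$ to get
\[
f(k,C_1C_2 m,n)\le C_1\,g(C_1k,\,C_2 m,\,C_1 n)+\frac{C_1 n}{C_2 m}+C_1,
\]
then apply $g\preccurlyeq h$ to the triple $(C_1k,m,C_1n)$ (which is admissible as soon as $m\ge C_1C_2k$) to obtain
\[
g(C_1k,C_2m,C_1n)\le C_2\,h(C_1C_2k,\,m,\,C_1C_2 n)+\frac{C_1C_2 n}{m}+C_2.
\]
Substituting back and using monotonicity of $f$ (non-increasing in the second slot) and of $h$ (non-decreasing in the first and third slots, from part (a)), one sees that any constant $C\ge 2C_1^2C_2$ absorbs all error terms: the product $C_1C_2\cdot h(C_1C_2k,m,C_1C_2n)$ is bounded above by $C\cdot h(Ck,m,Cn)$, the two $n/m$-type terms combine to at most $Cn/m$, and the additive constants combine to at most $C$. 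This yields the required inequality $f(k,Cm,n)\le Ch(Ck,m,Cn)+Cn/m+C$ for all $(k,m,n)$ with $m\ge Ck$.

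The only real point to watch is the bookkeeping of the three error terms ($n/m$-linear, additive, and the $h$-term) and making sure the condition $m\ge Ck$ at the final stage implies the conditions $C_2m\ge C_1k$ and $m\ge C_1C_2k$ needed to invoke the two hypotheses—both are guaranteed as soon as $C\ge C_1C_2$, which our choice satisfies. I do not expect any genuine obstacle: the argument is essentially just a careful composition of two quasi-order estimates, with monotonicity from part (a) used to replace one set of exponents/constants by a uniformly larger one.
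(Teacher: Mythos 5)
Your proposal is correct and follows essentially the same route as the paper: part (a) is read off from the definitions, and part (b) reduces to transitivity of $\preccurlyeq$, proved by composing the two inequalities (applying the first at $(k,C_2m,n)$ and the second at $(C_1k,m,C_1n)$) and absorbing the error terms into a single constant via the monotonicity built into membership in $\mathcal F$ (the paper takes $C=C_1(C_1C_2+1)$ where you take $C\ge 2C_1^2C_2$; both work).
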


\begin{proof}
Part (a) follows immediately from the definition. To prove transitivity of $\preccurlyeq$, suppose that $f\preccurlyeq g\preccurlyeq h$ for some $f,g,h\in \mathcal F$. By definition, there exist $C_1, C_2\in \NN$ such that
$$
f(k, C_1m, n)\le C_1g(C_1k, m, C_1n)+C_1 n/m +C_1
$$
and
$$
g(k, C_2m, n)\le C_2h(C_2k, m, C_2n)+C_2 n/m +C_2
$$
for all $(k,m,n)\in \NN^3$ satisfying $m\ge C_1k$ and $m\ge C_2k$, respectively. Therefore, we have
$$
f(k, C_1C_2m, n)\le  C_1\big( C_2h(C_1C_2k, m, C_1C_2n)+C_2 C_1n/m + C_2\big) + C_1n/(C_2m) +C_1
$$
for all triples $(k,m,n)\in \NN^3$ satisfying $m\ge C_1C_2k$. Since the functions $f$ and $h$ are non-increasing (respectively, nondecreasing) in the second (respectively, first and third) argument, we have $f(k,Cm,n)\le Ch(Ck,m,Cn)+Cn/m +C$, where $C=C_1(C_1C_2+1)$. Thus, $\preccurlyeq$ is a preorder and, therefore, $\sim $ is an equivalence relation.
\end{proof}

Our next goal is to relate Definition \ref{Def:IPG} to the one given in the introduction. To this end, we will use the van Kampen lemma, which states that a word $w\in F(X)$ represents $1$ in the group given by a presentation $\langle X\mid \mathcal R\rangle$ if and only if there exists a van Kampen diagram $\Delta $ over this presentation such that $\Lab(\partial \Delta)\equiv w$ (see \cite{LS}). It follows immediately from the proof that the minimal number of faces in such a diagram equals $\Area_{\mathcal R} (w)$. Furthermore, for every loop $c$ in the Cayley graph $\Cay(G,X)$ labelled by $w$, there is a canonical graph homomorphism  $\Delta^{(1)}\to \Cay(G,X)$ preserving labels and orientation that sends $\partial \Delta$ to $c$.

\begin{prop}\label{Prop:f=f}
For every group $G$ generated by a finite set $X$, we have
\begin{equation}\label{Eq:ips=}
f_{\Cay(G,X)}(k,m,n)= f_{G,X}(k,m,n).
\end{equation}
\end{prop}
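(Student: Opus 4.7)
The plan is to set up a correspondence between the objects parameterizing the two sides of (\ref{Eq:ips=}), using the van Kampen lemma cited just above the statement. By vertex-transitivity of $Cay(G,X)$, every loop is a translate of one based at the identity vertex $1$, and graph automorphisms preserve both $\ell(c)$ and $Area_m(c)$, so I may restrict the supremum on the left side of (\ref{Eq:ips=}) to based loops. Each such loop $c$ is uniquely determined by its label $w := \Lab(c)$, a word in $X \cup X^{-1}$ with $\|w\| = \ell(c)$. It therefore suffices to prove two claims about corresponding pairs $(c,w)$: (i) $c \in C_k(Cay(G,X))$ if and only if $w \in \ll \S_k \rr$; and (ii) under these equivalent conditions, $Area_m(c) = Area_{\S_m}(w)$ whenever $m \ge k$.

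The inequality $Area_m(c) \le Area_{\S_m}(w)$ is immediate from the van Kampen lemma. Let $\Delta$ be a minimum-area van Kampen diagram over the presentation $\langle X \mid \S_m \rangle$ with $\Lab(\partial \Delta) \equiv w$. The canonical label- and orientation-preserving graph homomorphism $\Delta^{(1)} \to Cay(G,X)$ sends $\partial \Delta$ to $c$ and each face boundary of $\Delta$ (a loop of length $\le m$) to a loop of length $\le m$; it is therefore a combinatorial $m$-filling of $c$ with exactly $Area_{\S_m}(w)$ faces. The same construction applied with $m$ replaced by $k$ shows that $w \in \ll \S_k \rr$ implies $c \in C_k(Cay(G,X))$.

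For the reverse inequality $Area_{\S_m}(w) \le Area_m(c)$, I would apply a standard spanning-tree argument to a minimum-area combinatorial $m$-filling $\phi \colon D \to Cay(G,X)$ of $c$. Write $N := Area_m(c)$ for the number of faces $F_1, \ldots, F_N$ of $D$, let $o \in \partial D$ be the distinguished vertex from Definition \ref{Def:k-fil}(a), and fix a spanning tree $T$ of $D$ rooted at $o$. Since the 2-complex obtained from $D$ by attaching a 2-cell to each interior face is simply connected---a standard fact for finite connected plane graphs---the loop $\partial D$ admits an equality $\partial D = \prod_{j=1}^{N} p_j (\partial F_j)^{\e_j} p_j^{-1}$ in $\pi_1(D,o)$ for appropriate signs $\e_j$ and some ordering of the faces, where each $p_j$ is a $T$-path from $o$ to a chosen vertex of $\partial F_j$. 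Applying $\phi$ and reading off labels yields the corresponding factorization $w = \prod_{j=1}^{N} \Lab(\phi(p_j))^{-1} \Lab(\phi(\partial F_j))^{\e_j} \Lab(\phi(p_j))$ in $F(X)$. By Definition \ref{Def:k-fil}(b), each $\Lab(\phi(\partial F_j))$ is a word of length $\le m$ representing $1$ in $G$, so it lies in $\S_m$, and the factorization witnesses $Area_{\S_m}(w) \le N$. Running the same argument on a $k$-filling of $c$ verifies $w \in \ll \S_k \rr$.

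The main subtlety, and the only place where one must use the shape of Definition \ref{Def:k-fil} with care, is that $\phi$ is only a \emph{generalized} combinatorial map, so it may collapse edges of $D$ to vertices; consequently, a combinatorial filling is not literally a van Kampen diagram, and the length $\|\Lab(\phi(\partial F_j))\|$ may be strictly less than $\ell(\partial F_j)$. Happily, this inequality is in the favorable direction for us: shortened face labels still lie in $\S_m$, so the spanning-tree factorization of $w$ produced above remains a valid witness for $Area_{\S_m}(w) \le N$ without any further modification.
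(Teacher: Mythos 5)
Your proposal is correct and follows essentially the same route as the paper: one direction pulls back a minimal van Kampen diagram over $\langle X\mid \mathcal S_m\rangle$ to a combinatorial $m$-filling via the canonical map, and the other converts a minimal filling into a product of conjugates of elements of $\mathcal S_m$ by cutting along a spanning tree (the paper phrases this as ``cutting the diagram into individual faces as in the proof of the van Kampen lemma''). Your explicit handling of collapsed edges matches the paper's device of labelling such edges by the empty word, and the key observation---that shortened face labels still lie in $\mathcal S_m$---is exactly right.
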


\begin{proof}
Let $c$ be a loop in $\Cay(G,X)$ and let $w\equiv \Lab(c)$. Assume first that $c\in C_k(\Gamma)$ for some $k\in \NN$. Let $\phi\colon D\to \Cay(G,X)$ be a minimal area combinatorial $m$-filling of $c$ for some $m\ge k$. We orient and label every $e\in E(D)$ as follows. If $\phi(e)$ is a vertex, we orient $e$ arbitrarily and label it with the empty word; if $\phi(e)=f\in E(\Cay(G,X))$ we endow $e$ with the same orientation and label as $f$. Further, we fill in all faces of the plane graph $D$ by gluing $2$-cells along their boundaries. By part (b) of Definition \ref{Def:k-fil}, we have $\Lab(\Pi)\in \mathcal S_m$ for every face $\Pi$ of the obtained $2$-complex $\Delta$ (see (\ref{Sk})).

Thus, $\Delta$ is almost a van Kampen diagram over $\langle X\mid \mathcal S_m\rangle$, the only difference is that we may have edges labelled by the empty word. Cutting $\Delta $ into the union of individual faces just as in the proof of the ``if" part of the van Kampen lemma, we obtain a decomposition (\ref{wprod}), where $\ell =  \Area_m(c)$ and $R_i\in \mathcal S_m$ for all $i$. In particular, $w\in \ll \mathcal S_k\rr$ and, for any $m\ge k$, we have $\Area_{\mathcal S_m} (w)\le \Area_m(c)$.

Now, suppose that $w\in \ll \mathcal S_k\rr$. For every $m\ge k$, there exists a van Kampen diagram $\Delta$ over $\langle X \mid \mathcal S_m\rangle$ with at most $\Area_{\mathcal S_m}(w)$ faces and the boundary label $\Lab(\Delta)\equiv w$. The canonical map $\Delta^{(1)} \to \Cay(G,X)$ sending $\partial \Delta$ to $c$ is a combinatorial $m$-filling of $c$ of area at most $\Area_{\mathcal R}(w)$. In particular, $c$ is $k$-contractible and $\Area_{m}(c)\le \Area_{\mathcal S_m}(w)$. Combining this with the opposite inequality proved above, we obtain (\ref{Eq:ips=}).
\end{proof}


\subsection{Quasi-isometric invariance}\label{Sec:QI}


The main goal of this section is to show that the Dehn spectrum is a quasi-isometric invariant of connected graphs. Recall that a map $\alpha \colon S\to T$ between metric spaces $(S, \d_S)$, $(T, \d_T)$ is \emph{$(K,L)$-coarsely Lipschitz} for some constants $K, L\ge 0$ if \begin{equation}\label{C-Lip}
\d_T(\alpha(x), \alpha(y)) \le K\d_S (x,y)+ L
\end{equation}
for all $x,y\in S$. Metric spaces $(S, \d_S)$ and $(T, \d_T)$ are \emph{quasi-isometric} if there exists $L\ge 0$ and $(L,L)$-coarsely Lipschitz maps $\alpha\colon S\to T$ and $\beta\colon T\to S$ such that $\alpha$ and $\beta$ are \emph{$L$-coarse inverses} of each other; that is,
\begin{equation}\label{ci}
\sup\limits_{t\in T} \d_T (\alpha\circ \beta(t), t)\le L \;\;\; {\rm and }\;\;\; \sup\limits_{s\in S} \d_S (\beta\circ \alpha(s), s)\le L.
\end{equation}

We begin by describing a general construction that will be used several times in our paper.

\begin{defn}\label{Def:im}
Let $\Gamma$, $\Delta $ be two graphs, $\alpha\colon V(\Gamma )\to V(\Delta )$ a map between the sets of their vertices, $c=e_1\ldots e_r$ a path in $\Gamma$, where $e_1, \ldots, e_r\in E(\Gamma)$. We say that a path $d$ in $\Delta $ is a \emph{coarse $\alpha$-image} of $c$ if it decomposes as $d=d_1\ldots d_r$, where $d_i$ is a geodesic path in $\Delta $ and $\alpha ((e_i)_\pm)=(d_i)_\pm$ for all $i$; if $c$ is not simple, we additionally require that if $e_i=e_j^{\pm 1}$ for some $i\ne j$, then $d_i=d_j^{\pm 1}$.
\end{defn}

\begin{lem}\label{Lem:indf}
Let $\Gamma$, $\Delta $ be two graphs, $\alpha\colon \Gamma\to \Delta$ a $(K,L)$-coarsely Lipschitz map, where $K,L\in \NN$, sending vertices of $\Gamma$ to vertices of $\Delta$. For any path $c$ in $\Gamma$ and any coarse $\alpha$-image $d$ of $c$ in $\Delta$, the following hold.
\begin{enumerate}
\item[(a)] $\ell(d)\le (K+L)\ell(c)$.
\item[(b)] If $c\in C_k(\Gamma)$, then $d\in C_{(K+L)k}(\Delta)$ and  $\Area_{(K+L)k}(d)\le \Area_k(c)$.
\end{enumerate}
\end{lem}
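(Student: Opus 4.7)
For part (a), I would write $c = e_1 \cdots e_r$ and $d = d_1 \cdots d_r$ as in Definition \ref{Def:im}. Since the endpoints of each $e_i$ lie at distance at most $1$ in $\Gamma$, the coarse Lipschitz estimate (\ref{C-Lip}) gives $\d_\Delta(\alpha((e_i)_-), \alpha((e_i)_+)) \le K+L$; because $d_i$ is a geodesic between these two vertices, $\ell(d_i) \le K+L$, and summing over $i$ yields $\ell(d)\le (K+L)\ell(c)$.

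For part (b), the plan is to convert a minimum-area combinatorial $k$-filling $\phi\colon D\to \Gamma$ of $c$ into a combinatorial $(K+L)k$-filling $\psi\colon D'\to \Delta$ of $d$ with the same number of faces, by refining $D$ along geodesics. For each edge $e'$ of $D$ with $\phi(e')\in E(\Gamma)$, I would pick a geodesic $\gamma_{e'}$ in $\Delta$ from $\alpha(\phi(e')_-)$ to $\alpha(\phi(e')_+)$ (of length at most $K+L$ by the same estimate) and subdivide $e'$ into $\ell(\gamma_{e'})$ new edges; in the degenerate case $\ell(\gamma_{e'})=0$, I would leave $e'$ undivided and declare it to map to a single vertex under $\psi$, and similarly for edges on which $\phi$ is already collapsed to a vertex. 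The resulting plane graph $D'$ is finite and connected. The map $\psi$ is then defined in the obvious way: each original vertex $v$ of $D$ goes to $\alpha(\phi(v))$, each new subdivision vertex along $e'$ goes to the corresponding interior vertex of $\gamma_{e'}$, each subdivision edge of $e'$ traces the matching edge of $\gamma_{e'}$, and collapsed edges map to a vertex.

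The one genuinely delicate point in specifying $\psi$ is that, in order to arrange $\psi(\partial D')=d$, the geodesic $\gamma_{f_i}$ assigned to the $i$-th boundary edge $f_i$ of $D$ must coincide with $d_i$. This prescription is consistent precisely thanks to the clause in Definition \ref{Def:im} requiring $d_i = d_j^{\pm 1}$ whenever $e_i = e_j^{\pm 1}$: since $\partial D$ need not be simple, a single edge of $D$ may be traversed more than once by $\partial D$, possibly with opposite orientations, and this clause guarantees that the two demands placed on $\gamma_{f_i}$ agree. Interior edges of $D$ may be assigned any geodesic. I expect this to be the main (and really the only) obstacle worth flagging in the argument.

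It then remains to verify the filling conditions, which is routine bookkeeping. For every face $F$ of $D$, the corresponding face $F'$ of $D'$ has boundary $\psi(\partial F')$ equal to the concatenation of the geodesics $\gamma_{e'}$ over edges $e'\in \partial F$ with $\phi(e')\in E(\Gamma)$ (other edges of $\partial F'$ map to vertices), giving $\ell(\psi(\partial F')) \le (K+L)\,\ell(\phi(\partial F)) \le (K+L)k$. Since subdividing edges leaves the number of faces unchanged, $\psi$ is a combinatorial $(K+L)k$-filling of $d$ with $Area(\psi) = Area(\phi) = Area_k(c)$, proving both $d\in C_{(K+L)k}(\Delta)$ and $Area_{(K+L)k}(d)\le Area_k(c)$.
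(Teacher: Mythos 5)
Your proposal is correct and follows essentially the same route as the paper: part (a) via the edge-by-edge coarse Lipschitz estimate on the geodesic pieces $d_i$, and part (b) by subdividing the edges of a minimal-area filling along geodesics in $\Delta$ (boundary edges mapping to the prescribed $d_i$, interior edges to arbitrary geodesics), which preserves the number of faces. Your explicit remark that the clause $e_i=e_j^{\pm1}\Rightarrow d_i=d_j^{\pm1}$ in Definition \ref{Def:im} is what makes the boundary assignment consistent is a point the paper leaves implicit, but it does not change the argument.
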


\begin{proof}
In the notation of Definition \ref{Def:im}, let $e_i$ be an edge of $c$. The corresponding subpath $d_i$ of $d$ has length
$$
\ell(d_i)=\d_{\Delta} (\alpha((e_i)_-), \alpha((e_i)_+)) \le K\d_\Gamma ((e_i)_-,(e_i)_+)+L \le K+L.
$$
This implies (a).

Further, every combinatorial filling $\phi \colon C \to \Gamma $ of $c$ can be transformed to a combinatorial filling $\psi \colon D \to \Delta $ of $d$ as follows. Let $e$ be an edge of $C$. If $\phi (e)$ is a vertex of $\Gamma$, we do not modify $e$ and define $\psi(e)=\phi(e)$. If $\phi (e)=e_i$ for some $i$ and $\ell(d_i)>0$, we subdivide $e$ into $\ell(d_i)$ edges and let $\psi$ map the resulting path to $d_i$ in the obvious way. For any other edge $e$ of $C$, we fix any geodesic path $p$ in $\Delta$ connecting $\alpha\circ \phi(e_-)$ to $\alpha\circ\phi(e_+)$, subdivide $e$ into $\ell(p)$ edges, and let $\psi$ map the obtained path to $p$. Let $D$ denote the resulting plane graph. Since every edge of $C$ gets subdivided into at most $K+L$ edges, $\psi\colon D \to \Delta $ is a combinatorial $(K+L)k$-filling of $d$, and it has the same area as $\phi$. Applying this construction to a combinatorial $k$-filling of $c$ of minimal area, we obtain (b).
\end{proof}

\begin{thm}\label{Thm:QIG}
Let $\Gamma $ and $\Delta$ be connected quasi-isometric graphs. Suppose that $\Gamma$ has a finitary Dehn spectrum. Then so does $\Delta$, and we have $f_\Gamma\sim f_\Delta$.
\end{thm}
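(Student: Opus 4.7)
The plan is to establish $f_\Delta \preccurlyeq f_\Gamma$; the reverse inequality (and hence $f_\Gamma \sim f_\Delta$) follows by the symmetric argument with the roles of $\Gamma$ and $\Delta$ swapped. Fix $(L,L)$-coarsely Lipschitz $L$-coarse inverses $\alpha\colon \Delta \to \Gamma$ and $\beta\colon \Gamma \to \Delta$; after a standard perturbation we may assume they send vertices to vertices. Given a $k$-contractible loop $c$ in $\Delta$ with $\ell(c) \le n$, the task is to construct a combinatorial $Cm$-filling of $c$ whose area realizes the bound of Definition \ref{Def:ord}, for a constant $C$ depending only on $L$ and all $m \ge Ck$.

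The key construction is a ``round trip'' $\Delta \to \Gamma \to \Delta$. First, use Lemma \ref{Lem:indf} to transport $c$ to a coarse $\alpha$-image $c' \subset \Gamma$, obtaining $\ell(c') \le 2Ln$ and $c' \in C_{2Lk}(\Gamma)$. Second, for $m \ge 2Lk$, the finitary hypothesis on $\Gamma$ supplies a combinatorial $m$-filling $\phi\colon D \to \Gamma$ of $c'$ of area at most $f_\Gamma(2Lk, m, 2Ln)$. Third, applying the construction in the proof of Lemma \ref{Lem:indf}(b) now to the map $\beta$ converts $\phi$ into a combinatorial $2Lm$-filling $\psi\colon D' \to \Delta$ of a coarse $\beta$-image $c''$ of $c'$, of the same area as $\phi$.

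The loop $c''$ is not equal to $c$; rather, it passes through the vertex $\beta\alpha(v_i)$ at distance at most $L$ from each vertex $v_i$ of $c$. To convert $\psi$ into a filling of $c$, I would insert a planar ``collar'' as follows. For each $i$, fix a geodesic $g_i$ in $\Delta$ of length $\le L$ connecting $v_i$ to $\beta\alpha(v_i)$. The edges $e_i$ of $c$, the geodesics $g_i$, and the subpaths of $c''$ between consecutive $\beta\alpha(v_i)$ (each of length $\le 4L^2$) partition the collar into $\ell(c) \le n$ ``bridge'' loops of length at most $C_0 := 4L^2 + 2L + 1$. Filling each bridge with a single face would contribute $n$ faces, overshooting the target $Cn/m$. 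Instead, I would batch $s = \Theta(m)$ consecutive bridges into a single strip---itself a loop of length $\le Cm$ fillable by one face---contributing only $C_1 n/m + 1$ strip faces in total.

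Concatenating $\psi$ with the filled collar yields a combinatorial $Cm$-filling of $c$ of total area at most $f_\Gamma(2Lk, m, 2Ln) + C_1 n/m + 1$. Enlarging $C$ to dominate $2L$, $C_0$, and $C_1$, and using monotonicity of $f_\Gamma$ in its first and third arguments, delivers
\[
f_\Delta(k, Cm, n) \le C f_\Gamma(Ck, m, Cn) + Cn/m + C
\]
for all $(k,m,n) \in \mathbb T$ with $m \ge Ck$, which is exactly $f_\Delta \preccurlyeq f_\Gamma$; the finitary-ness of $f_\Delta$ is an automatic byproduct of the same bound on the cofinal region $\{m \ge C^2 k\}$. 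The hard part I expect is purely combinatorial bookkeeping: each push-forward of a filling inflates edge lengths by a factor of up to $2L$, so the strip-batch size, the auxiliary constants $C_0$, $C_1$, and the shifts in the arguments of $f$ from Definition \ref{Def:ord} must all be coordinated into a single $C$ depending only on the quasi-isometry constant $L$.
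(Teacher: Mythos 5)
Your proposal is correct and follows essentially the same route as the paper: transport the loop with coarse images (Lemma \ref{Lem:indf}), fill in the target graph, pull the filling back, and close the gap between the pulled-back loop and the original one with a collar whose faces group together $\Theta(m)$ edges of $c$ so that only $O(n/m)+1$ extra faces of perimeter $O(L^2 m)$ are added. The only cosmetic difference is that you prove $f_\Delta\preccurlyeq f_\Gamma$ first and cite symmetry for the other direction, whereas the paper does the reverse.
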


\begin{proof}
Let $\alpha\colon \Gamma\to \Delta $ and $\beta\colon \Delta\to \Gamma$ be $L$-coarse inverse, $(L,L)$-coarsely Lipschitz maps for some $L\in \NN$. Without loss of generality, we can assume that $\alpha$ and $\beta$ send vertices to vertices. Let $k$ be a positive integer, $c$ a $k$-contractible loop in $\Gamma $, $d$ a coarse $\alpha$-image of $c$ in $\Delta$. By Lemma \ref{Lem:indf}, $d$ is $2Lk$-contractible.

Further, let $m\ge 2Lk$ and let $\tilde c$ be a coarse $\beta$-image of $d$ in $\Gamma$.  Applying Lemma \ref{Lem:indf} to a combinatorial $m$-filling of $d$, we obtain a combinatorial $2Lm$-filling $\tilde \phi\colon \tilde D\to \Gamma$ of $\tilde c$ of area
$$
\Area (\tilde\phi)=\Area_{m}(d) \le f_\Delta (2Lk, m, \ell(d)) \le f_\Delta (2Lk, m, 2L\ell(c)).
$$

\begin{figure}
  \begin{center}
 \hspace{18mm} 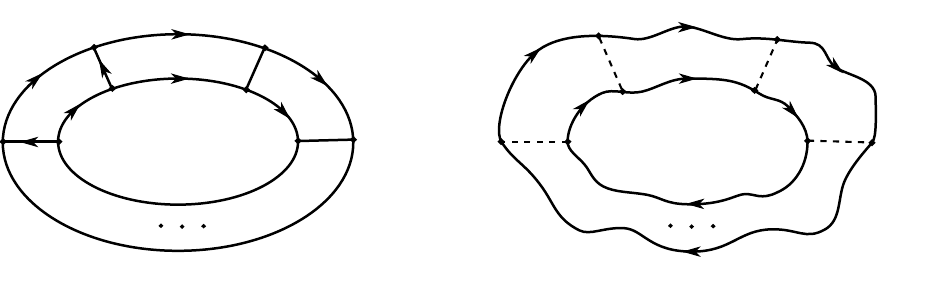
  \end{center}
  \vspace{-3mm}
  \caption{Constructing a combinatorial filling of $c$.}\label{figbord}
\end{figure}

In order to obtain a filling of $c$, we modify $\tilde D$ and $\tilde \phi$ as follows. Let $c=c_1\ldots c_r$, where
$$
r\le  \lceil\ell(c)/m\rceil$$
and
$$
1\le \ell(c_i)\le m
$$
for all $i=1, \ldots, r$. Let $\tilde c =\tilde c_1\ldots \tilde c_r$ and $\partial \tilde D=d_1\ldots d_r$, where
$$
(\tilde c_i)_\pm =\beta\circ \alpha((c_i)_\pm)\;\;\; {\rm and}\;\;\; \tilde\phi(d_i)=\tilde c_i, \;\;\; i=1, \ldots, r.
$$
Since $\alpha$ and $\beta $ are $L$-coarse inverse, we have $\d_\Gamma((c_i)_-, (\tilde c_i)_-)\le L$. Let $s=s_1\ldots s_r$ be a simple combinatorial loop (i.e., a graph homeomorphic to $\mathbb S^1$) embedded in the Euclidean plane, where $\ell(s_i)=\ell(c_i)$ for all $i$. We embed $\tilde D$ in the open disk bounded by $s$ so that $\partial \tilde D$ and $s$ are oriented in the same way. For  every $i$, we connect $(d_i)_-$ to $(s_i)_-$ by a combinatorial path $t_i$ of length $\d_\Gamma((c_i)_-, (\tilde c_i)_-)$ so that the interior of $t_i$ is disjoint from $s$, $\tilde D$, and $t_j$ for $j\ne i$ (see Fig. \ref{figbord}). Finally, we extend $\tilde\phi$ to a map $\phi \colon D\to \Gamma$ by sending each $s_i$ to $c_i$ and each $t_i$ to a geodesic path connecting $(\tilde c_i)_-$ to $(c_i)_-$ in $\Gamma$.

Let $K_i$ denote the face of $D$ bounded by $t_is_it_{i+1}^{-1}d_i^{-1}$ (indices are modulo $r$). Applying part (a) of Lemma \ref{Lem:indf} twice, we obtain $\ell(\tilde c_i)\le 4L^2 \ell(c_i)$ for every $i$. Hence,
$$
\begin{array}{rcl}
\ell(\phi(\partial K_i)) & = & \ell(\phi(s_i))+\ell(\phi (d_i))+\ell(\phi(t_i))+ \ell(\phi(t_{i+1}))\\&&\\
& = & \ell(c_i) + \ell(\tilde c_i) + 2L \\&&\\
& \le & \ell(c_i) + 4L^2 \ell(c_i) + 2L \\&&\\
& \le & (4L^2+2L+1)m.
\end{array}
$$
Therefore, $\phi$ is a combinatorial $Cm$-filling of $c$, where $C=4L^2+2L+1$. Note that
$$
\Area_{Cm}(c)\le \Area(\phi) \le \Area(\tilde\phi) +r\le f_\Delta (2Lk, m, 2L\ell(c)) + \ell(c)/m +1.
$$
This implies the inequality $f_\Gamma(k, m, n)\preccurlyeq f_\Delta (k,m,n)$. The proof of the opposite inequality is symmetric.
\end{proof}

Combining Lemma~\ref{Lem:finitary} and Theorem~\ref{Thm:QIG}, we could define, up to the equivalence relation introduced above, the Dehn spectrum of any metric space $M$ that is quasi-isometric to a locally finite, vertex-transitive graph $\Gamma$ by setting $f_M = f_\Gamma$. In particular, every compactly generated, totally disconnected, locally compact group admits a well-defined,  up to $\sim$, finitary Dehn spectrum, defined via its Cayley--Abels graph (see \cite{Led} for the definition and a survey of known results). Another natural direction is to consider Riemannian settings, which would allow one to define finitary Dehn spectra of Lie groups. Furthermore, it would be interesting to develop a unified approach in the broader framework of compactly generated locally compact groups. Note that the formal combination of Lemma~\ref{Lem:finitary} and Theorem~\ref{Thm:QIG} would not suffice in the latter two cases, as compactly generated locally compact groups (and even Lie groups) may not be quasi-isometric to locally finite vertex-transitive graphs; for an example, see the appendix to \cite{Cor}.

To keep the length of the paper under control, we work at the level of generality most suitable for our main goal -- the study of discrete groups -- and leave the development of the theory in more topological settings to interested readers.


\subsection{The case of finitely presented groups}\label{Sec:FP}


Our next goal is to prove Theorem \ref{Thm:FP}. To this end, we fix a finitely presented group $G$ and consider a special finite presentation of $G$.
\begin{defn}\label{Gpres}
Let $G=\langle X\mid \mathcal R\rangle$ be a finite presentation of a group $G$ satisfying the following two conditions:
\begin{enumerate}
\item[($\ast$)]\label{*} The set of relations $\mathcal{R}$ is equal to the set of all reduced words of length not greater than $3$ in $X \cup X^{-1}$ that represent $1$ in $G$;
\item[($\ast\ast$)]\label{**} For all $n\in \NN$, we have $\delta_G (n)>0$.
\end{enumerate}
\end{defn}
Condition ($\ast$) will be necessary to apply a result of Papasoglu (see Lemma \ref{m>r} below) in the proof of the upper bound in (\ref{fpspectrum}). Condition ($\ast\ast$) is 
equivalent to $\delta_G(1)>0$, and is also equivalent to the existence of a generator that represents 1 in $G$; it is necessary to guarantee that the lower bound in (\ref{fpspectrum}) is well-defined. Note that these conditions are non-restrictive, as the following well-known lemma shows. We provide a short proof for the convenience of the reader.

\begin{lem}
Every finitely presented group $G$ has a finite presentation $G=\langle X\mid \mathcal R\rangle$ satisfying conditions ($\ast$) and ($\ast\ast$) from Definition \ref{Gpres}.
\end{lem}

\begin{proof}
The first condition can be ensured by the usual triangulation procedure. Namely, we start with any finite presentation $G=\langle X_0\mid \mathcal R_0\rangle $. For every relator $x_1x_2\ldots x_k\in \mathcal R_0$, where $k>3$ and $x_i\in X_0^{\pm 1}$ for all $i$, we add extra generators $y_1, \ldots, y_{k-1}$ and relations $$y_1=x_1x_2,\;\;\; y_2=y_1x_3,\;\;\; \ldots,\;\;\; y_{k-1}=y_{k-2}x_k.$$ After that, we replace the relation $x_1x_2\ldots x_k=1$ with $y_{k-1}=1$. Finally, we add all relations of the form $R=1$, where $R$ is a reduced word in the alphabet $X\cup X^{-1}$ of length at most $3$ representing $1$ in $G$. The resulting presentation satisfies ($\ast$). Since it is obtained from $\langle X_0\mid \mathcal R_0\rangle$ by a finite sequence of Tietze transformations, it represents the same group $G$. The condition  ($\ast\ast$) can always be ensured by adding a redundant generator $z$ and the relation $z=1$.
\end{proof}

\begin{defn} For a van Kampen diagram $\Delta$ over $G=\langle X\mid \mathcal R\rangle$, we denote by $\d_\Delta $ the graph metric on its $1$-skeleton. The \emph{radius} of $\Delta$ is defined by the formula
$$
r(\Delta)=\max\{ \d_\Delta  (u, \partial \Delta)\mid u\; {\rm is \;a \;vertex \;of\;} \Delta\}.
$$
The \emph{perimeter} of $\Delta$ is $\ell(\partial \Delta)$.
\end{defn}
In this notation, the following result was proved in \cite[Section 2]{Pap}.

\begin{lem}\label{m>r}
Let $\Delta$ be a van Kampen diagram over $\langle X\mid \mathcal R\rangle$, satisfying condition ($\ast$) from Definition \ref{Gpres}. For every $m\ge r(\Delta)$, we can cut $\Delta$ into at most $\ell(\partial \Delta)/m +1$ subdiagrams of perimeter at most $25m$.
\end{lem}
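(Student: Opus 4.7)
The plan is to implement Papasoglu's ``thin slab'' cutting argument. The starting observation is that under the hypothesis $m\ge r(\Delta)$, every vertex of $\Delta$ lies within $\d_\Delta$-distance $m$ of $\partial\Delta$, so $\Delta$ is a disk-like $2$-complex of bounded combinatorial ``width''. The triangulation condition $(\ast)$ ensures every face has perimeter at most $3$, which makes combinatorial cuts along edges of $\Delta^{(1)}$ carve out well-behaved subdiagrams whose boundaries can be read off from edge paths.

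First I would partition the boundary cycle $\partial\Delta$ into consecutive subarcs $p_0,\dots,p_{k-1}$ of length at most $m$, with endpoints $x_0,\dots,x_{k-1}$; this requires $k\le \lceil \ell(\partial\Delta)/m\rceil\le \ell(\partial\Delta)/m+1$ subarcs, matching the target count. Next, for each $x_i$ I would construct a cutting path $\sigma_i$ in $\Delta^{(1)}$ by concatenating a geodesic from $x_i$ inward to a vertex $v_i$ of maximal $\partial\Delta$-depth (length $\le r(\Delta)\le m$) with a geodesic from $v_i$ back out to a boundary vertex $y_i$ (length $\le r(\Delta)\le m$), so $\ell(\sigma_i)\le 2m$ and $\sigma_i$ runs from boundary to boundary through the interior.

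The third and most substantial step is to verify that the cuts $\sigma_0,\dots,\sigma_{k-1}$ together with $\partial\Delta$ decompose $\Delta$ into at most $k$ subdiagrams of perimeter at most $25m$. A priori cuts may cross one another or fail to reach new boundary regions; my plan is to process them one at a time, at each stage replacing a cut by a shortened variant that follows previously placed cuts whenever the two cross, using the planarity of $\Delta$ to keep the collection combinatorially embedded. Each resulting subdiagram is then bounded by a single boundary arc $p_i$ (length $\le m$) together with a uniformly bounded number of cut segments (each of length $\le 2m$), plus fragments produced by crossings whose total length is controlled by the depth bound $m$. Careful accounting of these pieces yields the constant $25$.

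The main obstacle, and the reason for the non-sharp constant $25$, is this last step: cuts may interact in complex planar patterns, and a uniform perimeter bound (rather than an average one) requires exploiting both planarity and the fact that the $\partial\Delta$-depth along any geodesic cut never exceeds $m$. Reproducing this bookkeeping exactly as in \cite[Section 2]{Pap} is the delicate part of the proof; I do not expect to improve the constant, only to carry the argument through in a way compatible with the triangulated presentation fixed in $(\ast)$.
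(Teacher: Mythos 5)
There is a genuine gap, and it starts already at your second step. You claim the geodesic from $x_i$ to a vertex $v_i$ of maximal depth has length at most $r(\Delta)\le m$. The hypothesis $m\ge r(\Delta)$ says that every vertex of $\Delta$ lies within $m$ of $\partial\Delta$; it does not say that every boundary vertex lies within $m$ of a deepest vertex. If $\Delta$ is a long diagram of depth $1$ everywhere except for a single bulge of depth $r(\Delta)$ at one end, then from a boundary vertex $x_i$ at the other end the nearest maximal-depth vertex is at distance comparable to $\ell(\partial\Delta)$, so the asserted bound $\ell(\sigma_i)\le 2m$ fails. (The return leg from $v_i$ to the boundary is indeed at most $r(\Delta)$, but not the outgoing leg.)

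More importantly, the heart of the lemma is precisely the uniform perimeter bound for \emph{every} complementary piece, and your third step asserts it rather than proving it: nothing in the construction constrains the exit vertices $y_i$, and the decomposition depends on them essentially. In a strip of width $2m$ and length $L\gg m$, a geodesic from $v_i$ ``back out'' may return to the same side it started from; if all cuts do so, the piece containing the opposite side has perimeter of order $L$, not $25m$. Even with well-chosen exits, one must argue that a single piece meets only a bounded number of cuts, and one must use the radius hypothesis to pair up portions of $\partial\Delta$ that are far apart along the boundary but close inside $\Delta$ (this is what handles thin necks, as in the strip example); that idea is absent from your outline, which defers exactly this ``bookkeeping'' back to Papasoglu. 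There is also a small counting slip: $\lceil \ell(\partial\Delta)/m\rceil$ boundary-to-boundary cuts can create one more piece than $\ell(\partial\Delta)/m+1$. Note finally that the paper does not reprove this statement, it quotes it from Section 2 of \cite{Pap}; so if your goal is an actual proof rather than a citation, the missing part is the core of Papasoglu's argument, not the subdivision of the boundary into arcs of length $m$, which is the easy part.
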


We are now ready to prove the theorem that connects the Dehn spectrum to the Dehn function of finitely presented groups. 

\begin{proof}[Proof of Theorem \ref{Thm:FP}]
Let $\langle X\mid \mathcal R\rangle$ be a finite presentation of $G$ satisfying ($\ast$) and ($\ast\ast$) from Definition \ref{Gpres} and let $\delta_G(n)$ denote its Dehn function. Throughout the proof, we use the notation $\mathcal S_k$ and $\Area_{\mathcal S_k}$ introduced in Definition \ref{Def:main}. Note that $\mathcal S_3=\mathcal R$ by ($\ast$).

We begin by proving the upper bound in (\ref{fpspectrum}). For every word $w$ in the alphabet $X^{\pm 1}$ representing $1$ in $G$ and any positive integer $m$, we will show that
\begin{equation}\label{A25m}
\Area_{\mathcal S_{25m}}(w) \le \theta(w)/m +1,
\end{equation}
where
$$
\theta(w)=15 \Area_{\mathcal S_3}(w) +\| w\|.
$$
This implies $f_{G,X}(k, 25m, n) \le (15\delta_G(n)+n)/m +1$ and the desired upper bound follows.

The proof of (\ref{A25m}) is by induction on $\theta(w)$. If $\theta (w)\le 25 m$ the claim is obvious as $\| w\|\le 25m$ and $\Area_{\mathcal S_{25m}}(w)\le 1$ in this case. Assume that $\theta(w)>25 m$.

Throughout the rest of the proof, by the \emph{area} of a van Kampen diagram $\Delta$, denoted by $\Area(\Delta)$, we mean the number of faces in $\Delta$. Let $\Delta $ be a minimal area van Kampen diagram over $\langle X\mid \mathcal R\rangle$ such that $\lab(\partial\Delta )\equiv w$.  There are two cases to consider.

{\it Case 1.} Suppose first that $m\ge r(\Delta)$. Applying Lemma \ref{m>r} to $\Delta$, we obtain $$\Area_{\mathcal S_{25m}}(w) \le \ell(\partial \Delta)/m +1=\|w\|/m +1\le \theta(w)/m +1.$$

\begin{figure}
  \begin{center}
 \hspace{18mm} 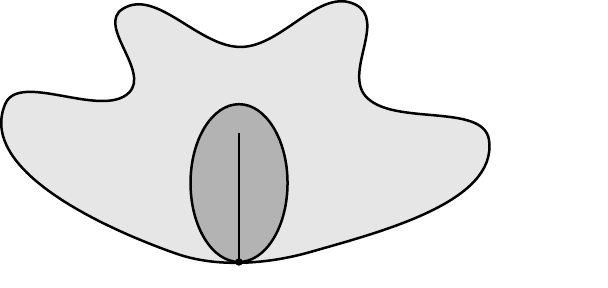
  \end{center}
  \vspace{-3mm}
  \caption{Case 2 in the proof of the upper bound in Theorem \ref{Thm:FP}}\label{fig4}
\end{figure}

{\it Case 2.} Now assume that $m<r(\Delta)$. Let $u$ be a vertex of $\Delta$ such that $\d_\Delta (u, \partial \Delta)=m$ and let $p$ denote a geodesic path in $\Delta^{(1)}$ connecting $u$ to the closest vertex $v\in \partial \Delta$ (see Fig. \ref{fig4}). Clearly, $\ell(p)= m$. Let $\Sigma $ be the minimal simply connected subdiagram of $\Delta $ containing $p$ and all faces of $\Delta$ that have at least one common edge with $p$. Since $p\cap \partial \Delta =\{v\}$, every edge of $p$ belongs to a face of $\Delta$. By ($\ast$), we have $\Area(\Sigma)\ge m/3$ (we do not care about optimal constants here). On the other hand, every face $\Pi$ of $\Sigma$ that shares an edge with $\partial \Sigma$ must also share an edge with $p$, as otherwise $\Pi$ can be excluded from $\Sigma$. This implies that $\ell(\partial \Sigma)\le 4m$.

Passing to a cyclic shift of $w$ if necessary, we can assume that $w$ is read along $\partial \Delta$ starting from the vertex $v$ in the counterclockwise direction. Let  $s\equiv\lab(\partial \Sigma)$, where the label is also read counterclockwise starting from $v$ (see Fig. \ref{fig4}). By cutting off $\Sigma $, we get the decomposition $w=w^\prime  s$ in the free group $F(X)$, where $w^\prime =ws^{-1}$. We have
$$
\Area _{\mathcal S_3}(w^\prime) \le \Area (\Delta) - \Area(\Sigma)\le \Area_{\mathcal S_3}(w)-m/3,
$$
and $$\| w^\prime\|\le \|w\| + \| s\| \le \| w\|+ 4m.$$
Hence,
\begin{equation}\label{tW'}
\theta (w^\prime)\le  15 (\Area_{\mathcal S_3} (w)-m/3) + \| w\| +4m  = \theta (w)- m.
\end{equation}
Using the inductive assumption, the inequality $\| s\| \le 4m < 25 m$, and (\ref{tW'}), we obtain
$$
\Area_{\mathcal S_{25m}}(w)\le \Area_{\mathcal S_{25m}}(w^\prime) +1 \le \theta (w^\prime)/m +2 \le \theta (w)/m +1.
$$

Next, we prove the lower bound in (\ref{fpspectrum}).
Fix any $m\ge 3$. By the definition of the Dehn spectrum, every word $w\in \ll \mathcal R\rr$ of length $\| w\| = n$  decomposes as
\begin{equation}\label{Eq:w=}
w=\prod_{i=1}^\ell f_i^{-1}R_i f_i,
\end{equation}
where $f_i \in F(X)$, $R_i\in \mathcal S_m$, and $\ell \le f_{G,X}(3,m,n)$. Further, every $R_i$ decomposes as a product of at most $\delta_G(m)$ conjugates of elements of $\mathcal R$ in $F(X)$ by the definition of the Dehn function. Substituting these decompositions in (\ref{Eq:w=}), we obtain the inequality $\Area_{\mathcal R} (w)\le \ell \delta_G(m)$. Therefore, $\delta_G(n)\le f_{G,X}(3,m,n)\delta_G(m)$ for all $n$ and all $m\ge 3$ and the left inequality in (\ref{fpspectrum}) follows.
\end{proof}

We now discuss the dependence of the lower and upper bounds in (\ref{fpspectrum}) on the choice of the finite presentation of $G$. To this end, we recall the definition of the equivalence relation commonly used for Dehn functions.

\begin{defn}
Let $f, g\colon \NN\to \NN\cup \{ 0\}$. We write $f\precapprox g$ if there exists $C\in \NN$ such that $f(n)\le Cg(Cn)+Cn$. Note that $\precapprox$ is a quasi-order on the set of all non-decreasing functions $\NN\to \NN$. By $\approx$ we denote the induced equivalence relation.
\end{defn}

It is well-known that the Dehn functions of any two finite presentations of the same group are $\approx$-equivalent.

\begin{defn}
We say that a function $f\colon \NN\to \NN$ has \emph{at least linear growth} if $\inf_{n\in \NN} f(n)/n>0$.
\end{defn}

The following lemma is likely known to experts. However, it does not seem to have been recorded in the literature. The closest result we could find is a considerably weaker theorem proved in \cite{Bat}: if a group $G$ has a finite presentation with Dehn function satisfying $\lim_{n\to \infty}\delta_G(n)/n=0$, then $G$ is either free or finite.

\begin{lem}\label{Lem:sl}
Let $G=\langle X\mid \mathcal R\rangle$ be a finite presentation. If $\ll \mathcal R\rr\ne \{ 1\}$ in $F(X)$, then the Dehn function of $G$ with respect to this presentation has at least linear growth. More precisely, for every $R_0 \in \mathcal{R}$ such that $R_0\ne 1$ in $F(X)$, the area of $R_0^n$ has at least linear growth with respect to $n$.
\end{lem}

\begin{proof}
Recall that a map $\phi\colon F(X)\to \mathbb R$ is a \emph{homogeneous quasi-character} if there exists a constant $K$ such that $\phi(f^n)=n\phi(f)$ and $|\phi(fg)-\phi(f)-\phi(g)|\le K$ for all $f,g\in F(X)$. It is well known that for any non-trivial element $f\in F(X)$, there is a homogeneous quasi-character $\phi\colon F(X)\to \mathbb R$ such that $\phi (f)>0$ (see the beginning of Section 3 in \cite{Bro} or \cite{BF} for more details and greater generality).

Let $\phi\colon F(X)\to \mathbb R$ be a homogeneous quasi-character such that $\phi (R_0)>0$. The definition of a homogeneous quasi-character implies that for every $f_i \in F(X)$, $R_i\in \mathcal R$, and $\e_i=\pm 1$, we have
\begin{equation}
\begin{split}
\left|\phi\left(\prod\limits_{i=1}^\ell f_i^{-1}R_i^{\e_i} f_i\right)\right| & \le \sum\limits_{i=1}^\ell \left|\phi (f_i^{-1}R_i^{\e_i}f_i)\right| + (\ell-1)K\\ & \le\sum\limits_{i=1}^\ell(|\phi(R_i)|+2K)+(\ell-1)K\le \ell(M+3K),
\end{split}
\end{equation}
where $M=\max_{R\in \mathcal R}|\phi (R)|$. On the other hand, $\phi(R_0^n)=n\phi(R_0)$. This implies that $\Area_{\mathcal R} (R_0^n)\ge n|\phi(R_0)|/(M+3K)$ and, therefore, the Dehn function of $\langle X\mid \mathcal R\rangle$ has at least linear growth.
\end{proof}

\begin{lem}\label{Lem:ab}
Let $\alpha_1,\alpha_2,\beta_1,\beta_2\colon \NN\to \NN$ be any non-decreasing functions such that $\alpha_1 \approx\alpha_2$ and $\beta_1\approx\beta_2$. Suppose that $\beta_1$ and $\beta_2$ have at least linear growth. Then $\alpha_1(n)/\beta_1(m) \sim \alpha_2(n)/\beta_2(m)$, where both sides are considered as functions of triples $(k,m,n)$ independent of $k$.
\end{lem}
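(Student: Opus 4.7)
The plan is to prove one direction, $\alpha_1(n)/\beta_1(m) \preccurlyeq \alpha_2(n)/\beta_2(m)$, with the reverse following by swapping the subscripts $1 \leftrightarrow 2$. Since neither function depends on $k$, the restriction $m \ge Ck$ in Definition \ref{Def:ord} is harmless, and the task reduces to exhibiting an integer $C$ with
\[
\frac{\alpha_1(n)}{\beta_1(Cm)} \le C\,\frac{\alpha_2(Cn)}{\beta_2(m)} + C\frac{n}{m} + C
\]
for all $n, m \in \NN$.

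The $\approx$-equivalences supply a single integer $D$ such that $\alpha_1(n) \le D\alpha_2(Dn) + Dn$ and $\beta_2(m) \le D\beta_1(Dm) + Dm$ for every $n, m$. The only delicate point is the additive term $Dm$ in the second inequality, which by itself merely yields $\beta_1(Dm) \ge (\beta_2(m) - Dm)/D$ and could carry no information. This is exactly where the linear-growth hypothesis enters. Setting $c_1 = \inf_m \beta_1(m)/m > 0$ and using that $\beta_1$ is non-decreasing, one has $Dm \le (D/c_1)\beta_1(m) \le (D/c_1)\beta_1(Dm)$, so
\[
\beta_2(m) \le E\,\beta_1(Dm), \qquad E := D(1 + 1/c_1),
\]
or equivalently $\beta_1(Dm) \ge \beta_2(m)/E$. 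This is the clean multiplicative comparison of the denominators that we wanted.

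The finish is mechanical. Set $c_2 = \inf_m \beta_2(m)/m > 0$ and choose an integer $C$ with $C \ge \max(D, DE, DE/c_2)$. Since $\beta_1(Cm) \ge \beta_1(Dm) \ge \beta_2(m)/E$ and $\alpha_2(Dn) \le \alpha_2(Cn)$, substituting the two $\approx$-bounds into $\alpha_1(n)/\beta_1(Cm)$ produces a main term $DE\,\alpha_2(Cn)/\beta_2(m) \le C\,\alpha_2(Cn)/\beta_2(m)$ together with an error $DE\,n/\beta_2(m) \le (DE/c_2)(n/m) \le C(n/m)$, where the last step uses the linear-growth bound $\beta_2(m) \ge c_2 m$. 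Adding the harmless $+C$ gives the required inequality, and the reverse direction is verbatim after swapping indices. The main conceptual obstacle --- and the sole role of the linear-growth hypothesis --- is the passage from the additive slack $+Dm$ in the $\approx$-comparison of the denominators to a purely multiplicative inequality, carried out in the middle paragraph; everything else is substitution.
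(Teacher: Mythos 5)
Your proposal is correct and follows essentially the same route as the paper: use the linear growth of $\beta_1$ to upgrade the additive $\approx$-comparison of the denominators to a purely multiplicative one, substitute, and then use the linear growth of $\beta_2$ to convert the leftover error term $n/\beta_2(m)$ into a multiple of $n/m$. The paper merely compresses your middle paragraph into the single assertion that linear growth yields $\beta_2(n)\le C\beta_1(Cn)$; your version spells out the absorption explicitly but is otherwise identical.
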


\begin{proof}
The assumptions of the lemma imply that we can find $C\in \NN$ such that $\alpha _1(n)\le C\alpha_2(Cn)+Cn$ and $\beta _2(n)\le C\beta_1(Cn)$ for all $n\in \NN$. In turn, these inequalities imply
$$
\frac{\alpha_1(n)}{\beta_1(Cm)} \le \frac{C\alpha_2(Cn)+Cn}{\beta_2(m)/C} \le  C^2 \frac{\alpha_2(Cn)}{\beta_2(m)} + \frac{C^2}{\inf_{m\in \NN}\beta_2(m)/m}\cdot \frac nm .
$$
Therefore, $\alpha_1(n)/\beta_1(m) \preccurlyeq \alpha_2(n)/\beta_2(m)$. Similarly, $\alpha_2(n)/\beta_2(m) \preccurlyeq \alpha_1(n)/\beta_1(m)$.
\end{proof}

\begin{rem}
The assumption that $\beta_1$ and $\beta_2$ have at least linear growth cannot be omitted. Indeed, $1\approx m$ but $n\not\sim n/m$.
\end{rem}

The following corollary justifies Remark \ref{Rem:pres}.

\begin{cor}\label{Cor:fp}
Let $G$ be a finitely presented group. Suppose that $\delta_G$ is the Dehn function of a finite presentation of $G$ such that $\delta_G(m)>0$ for all $m\in \NN$. Then the inequalities (\ref{fpspectrum}) of Theorem \ref{Thm:FP} hold.
\end{cor}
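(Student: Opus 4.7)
The plan is to reduce the statement to Theorem~\ref{Thm:FP}, which establishes (\ref{fpspectrum}) for a particular finite presentation satisfying conditions ($\ast$) and ($\ast\ast$). What remains is to show that both the lower and upper bounds in (\ref{fpspectrum}) are $\sim$-invariant when one passes from one finite presentation of $G$ to another, so long as the relevant Dehn functions are strictly positive on all of $\NN$. The positivity hypothesis in the statement of the corollary is exactly what enables this.

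Concretely, let $\delta_G$ be the Dehn function of the given finite presentation $\langle X\mid \mathcal R\rangle$, and let $\delta'_G$ be the Dehn function of some finite presentation of $G$ satisfying ($\ast$) and ($\ast\ast$). By the standard fact that any two finite presentations of the same finitely presented group are related by Tietze transformations, we have $\delta_G\approx\delta'_G$. Theorem~\ref{Thm:FP} gives
$$
\frac{\delta'_G(n)}{\delta'_G(m)} \preccurlyeq f_G(k,m,n) \preccurlyeq \frac{\delta'_G(n)}{m}.
$$
Next I verify that Lemma~\ref{Lem:sl} applies to both presentations. The hypothesis $\delta_G(m)>0$ for all $m$ forces $\ll\mathcal R\rr\ne\{1\}$ in $F(X)$, since otherwise $\delta_G\equiv 0$; hence $\delta_G$ has at least linear growth. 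The same holds for $\delta'_G$ by condition ($\ast\ast$).

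Finally, I apply Lemma~\ref{Lem:ab} twice. For the lower bound, take $\alpha_1=\beta_1=\delta'_G$ and $\alpha_2=\beta_2=\delta_G$; since $\delta_G\approx\delta'_G$ and both have at least linear growth, the lemma yields $\delta'_G(n)/\delta'_G(m)\sim\delta_G(n)/\delta_G(m)$. For the upper bound, take $\alpha_1=\delta'_G$, $\alpha_2=\delta_G$, and $\beta_1=\beta_2$ equal to the identity $m\mapsto m$, which trivially has at least linear growth; the lemma yields $\delta'_G(n)/m\sim\delta_G(n)/m$. Substituting these equivalences into the bounds from Theorem~\ref{Thm:FP} gives (\ref{fpspectrum}) with $\delta_G$ in place of $\delta'_G$. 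The only place where effort is required is confirming that the positivity hypothesis feeds correctly into Lemma~\ref{Lem:sl}, and this is immediate from the contrapositive.
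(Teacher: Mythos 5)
Your proposal is correct and follows essentially the same route as the paper: invoke Theorem~\ref{Thm:FP} for the special presentation satisfying ($\ast$), ($\ast\ast$), use positivity plus Lemma~\ref{Lem:sl} to get at least linear growth of both Dehn functions, and then apply Lemma~\ref{Lem:ab} to each bound (your explicit choices of $\alpha_i,\beta_i$, including taking $\beta_1=\beta_2=\mathrm{id}$ for the upper bound, are exactly the intended applications the paper leaves implicit).
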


\begin{proof}
Note that if $G=\langle Y\mid \mathcal S\rangle$ is a finite presentation of $G$ and the corresponding Dehn function takes at least one positive value, then $\ll \mathcal S\rr\ne \{ 1\}$ in $F(Y)$. Therefore, the Dehn function of $G$ with respect to $\langle Y\mid \mathcal S\rangle$ has at least linear growth by Lemma \ref{Lem:sl} (in particular, this is true for the finite presentation $\langle X\mid \mathcal R\rangle$ used in the proof of Theorem \ref{Thm:FP}, because we assumed that condition $(\ast\ast)$ from Definition \ref{Gpres} holds). Thus, Lemma \ref{Lem:ab} applied to the lower and upper bounds in Theorem \ref{Thm:FP} allows us to replace the Dehn function with respect to $\langle X\mid \mathcal R\rangle$ with any other Dehn function of  $G$ that takes only positive values.
\end{proof}

\begin{cor}\label{Cor:hyp}
Let $G$ be a finitely presented group. We have $f_G(k,m,n)\sim \alpha(n)/m$ for some function $\alpha\colon \NN\to \NN\cup\{0\}$ if and only if $G$ is hyperbolic.
\end{cor}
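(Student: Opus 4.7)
The forward direction is immediate from Corollary \ref{Cor:Hyp}: if $G$ is hyperbolic, then $f_G\sim n/m$, which has the required form with $\alpha(n)=n$. For the converse, I would suppose $f_G\sim \alpha(n)/m$ and aim to upgrade this to $f_G\sim n/m$, after which Corollary \ref{Cor:Hyp} delivers hyperbolicity.

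The key ingredient is a completely elementary upper bound not used explicitly elsewhere in the paper: for every triple $(k,m,n)$ with $m\ge n$, one has $f_G(k,m,n)\le 1$. Indeed, any word $w$ representing $1$ in $G$ with $\|w\|\le n\le m$ already belongs to $\mathcal S_m$, so $Area_{\mathcal S_m}(w)\le 1$ via the trivial one-term decomposition $w = 1^{-1}\cdot w\cdot 1$.

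To exploit this, I would unpack the relation $\alpha(n)/m\preccurlyeq f_G$ afforded by $f_G\sim\alpha(n)/m$. Definition \ref{Def:ord} supplies $C\in\NN$ such that
$$
\frac{\alpha(n)}{Cm}\le Cf_G(Ck,m,Cn)+C\frac{n}{m}+C
$$
whenever $m\ge Ck$. I would specialise to $k=1$ and $m=Cn$; then the constraint $m\ge Ck=C$ holds for $n\ge 1$, while $m=Cn$ triggers the observation above to give $f_G(C,Cn,Cn)\le 1$. The displayed inequality collapses to $\alpha(n)/(C^2 n)\le 2C+1$, yielding $\alpha(n)\le (2C^3+C^2)n$, so $\alpha$ grows at most linearly.

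This linear bound on $\alpha$ immediately gives $\alpha(n)/m\preccurlyeq n/m$, hence $f_G\preccurlyeq n/m$. The opposite inequality $n/m\preccurlyeq f_G$ is in fact automatic for every non-negative $f_G\in\mathcal F$: taking $C=1$ in Definition \ref{Def:ord} reduces it to the trivially true inequality $n/m\le f_G(k,m,n)+n/m+1$. Combining the two gives $f_G\sim n/m$, and Corollary \ref{Cor:Hyp} finishes the argument. The only step demanding any real care is verifying that the parameter choice $k=1$, $m=Cn$ respects the constraint $m\ge Ck$ in Definition \ref{Def:ord}; beyond that, the proof is a one-line area estimate married to the $n/m$ slack built into $\preccurlyeq$.
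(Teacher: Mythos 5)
Your central computation is exactly the one in the paper: both arguments unpack $\alpha(n)/m\preccurlyeq f_G$ from Definition \ref{Def:ord}, substitute $k=1$, $m=Cn$, and use the trivial bound $f_{G,X}(C,Cn,Cn)\le 1$ to conclude that $\alpha$ grows at most linearly. Where you differ is the endgame. The paper does not pass through Corollary \ref{Cor:Hyp}: from $f_G\preccurlyeq \alpha(n)/m \preccurlyeq n/m$ it extracts an explicit bound $f_{G,X}(k,m,n)\le Dn/m+D$ for $m\ge Dk$, chooses $M$ with $G=\langle X\mid \mathcal S_M\rangle$, and observes that $\delta_{\langle X\mid \mathcal S_{DM}\rangle}(n)=f_{G,X}(M,DM,n)$ is then bounded by a linear function, so $G$ is hyperbolic by the classical Dehn-function characterization. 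You instead note, correctly, that $n/m\preccurlyeq f_G$ holds automatically because of the $Cn/m$ slack in Definition \ref{Def:ord} (so $n/m$ is a $\preccurlyeq$-minimum among nonnegative elements of $\mathcal F$), conclude $f_G\sim n/m$, and then quote Corollary \ref{Cor:Hyp}.

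The one point you must be careful about is that Corollary \ref{Cor:Hyp} is only sketched in the introduction, and within the paper the implication ``linear isoperimetric spectrum implies hyperbolic'' is rigorously established precisely by the proof of the corollary you were asked to prove; quoting it wholesale therefore risks circularity. The repair is short and keeps your structure intact: having reduced to $f_G\sim n/m$, unpack $f_{G,X}\preccurlyeq n/m$ at $k=M$ and $m=CM$ to get that $f_{G,X}(M,C^2M,n)$ is bounded by a linear function of $n$, and recognize, via Example \ref{Ex:fp}, that $f_{G,X}(M,C^2M,n)=\delta_{\langle X\mid \mathcal S_{C^2M}\rangle}(n)$ is the Dehn function of a finite presentation of $G$; hyperbolicity follows. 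With that two-line addition your proof is complete and is in substance the paper's argument, with the pleasant extra observation that the reverse inequality $n/m\preccurlyeq f_G$ is free, which lets you phrase the intermediate conclusion as linearity of the spectrum.
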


\begin{proof}
The backwards implication follows immediately from Theorem \ref{Thm:FP} and linearity of the Dehn function of hyperbolic groups. Let us prove the forward implication. We fix a finite generating set $X$ of $G$. By our assumption, there exists $C\in \NN$ such that
\begin{equation}\label{Eq:amf1}
\frac{\alpha(n)}{Cm} \le Cf_{G,X} (Ck, m, Cn)+ C\frac nm +C
\end{equation}
for all $(k,m,n)\in \NN^3$ satisfying $m\ge Ck$. Substituting $k=1$, $m=Cn$ in (\ref{Eq:amf1}) and taking into account the inequality $f_{G,X} (C, Cn, Cn)\le 1$, we obtain that $\sup_{n\in \NN}\alpha(n)/n<\infty$. This and the inequality $f_G(k,m,n)\preccurlyeq \alpha(n)/m$ imply the existence of a constant $D$ such that
\begin{equation}\label{Eq:amf2}
f_{G,X}(k,Dm,n)\le D n/m +D
\end{equation}
for all $m\ge D k$.

Since $G$ is finitely presented, for any choice of a finite set of relators, there exists $M$ such that the Dehn function satisfies $\delta_G(n)=f_{G, X}(M, M, n)$ for all sufficiently large $n$ (see Remark \ref{Ex:fp}). Combining the latter equality with (\ref{Eq:amf2}), we conclude that $G$ has a linear Dehn function and, therefore, it is hyperbolic.
\end{proof}

Corollary \ref{Cor:hyp} can be used to provide examples of functions that \emph{cannot} be equivalent to the Dehn spectrum of any finitely presented group. E.g., the function $n^2/m$ has this property.

Now we prove another corollary of Theorem \ref{Thm:FP}.

\begin{proof}[Proof of Corollary \ref{Cor:Quad}]
Let $G$ be a group given by a finite presentation $\langle X\mid \mathcal R\rangle$ satisfying ($\ast$) from Definition \ref{Gpres}. Suppose that the corresponding Dehn function satisfies $\delta_G(n)\le Cn^2$ for some $C\in \NN$. Papasoglu proved that there exists a constant $K\in \NN$ such that for any $a\in \mathbb N$, one can cut a minimal area van Kampen diagram over $\langle X\mid \mathcal R\rangle$ with boundary length $n>100 a$ into less than $Ka^2$ subdiagrams of perimeter at most $n/a$ each (see the first paragraph on p. 803 in \cite{Pap}).

Fix any $(k,m,n)\in \NN^3$ such that $n\ge m>100$ and let $a= \lfloor n/m\rfloor$. Thus, we have $2m>n/a \ge m>100$. Let $w$ be any word in $X\cup X^{-1}$ representing $1$ in $G$ of length $\| w\| = n$ and let $\Delta $ be a minimal area van Kampen diagram over the presentation $\langle X\mid \mathcal R\rangle$  such that $\Lab (\partial \Delta)\equiv w$. Applying the result of Papasoglu to $\Delta $, we obtain
$$
\Area_{\mathcal S_{2m}}(w)\le Ka^2 \le K {n^2}/{m^2}.
$$
Thus, $f_{G,X} (k,2m,n) \le K(n/m)^2$ for all $(k,m,n)\in \NN^3$ such that $n\ge m>100$. Note also that $f_{G,X}(k,m,n)\le 1$ whenever $n\le m$. Therefore, we have $f_{G,X}(k,m,n)\preccurlyeq (n/m)^2$. Combining this inequality with the lower bound in (\ref{fpspectrum}), we obtain $f_{G,X}(k,m,n)\sim (n/m)^2$.
\end{proof}

We conclude this section with the proof of Proposition \ref{Prop:AC}. This result is simply a reformulation of known ones, and so we keep this part of our paper brief. For the precise definition and basic properties of asymptotic cones, we refer the reader to \cite{DK}. For the purpose of this paper, the property ``all asymptotic cones are simply connected" can be taken as a black box thanks to the following reformulation of the equivalent loop division property \cite[Theorem 4.4]{Dru} (see also \cite{Gro93, Pap}).

\begin{lem}\label{Lem:Dru}
Let $\langle X\mid \mathcal R\rangle $ be a finite presentation of a group $G$, $K=\max\{ \| R\| \mid R\in \mathcal R\}$. All asymptotic cones of $G$ are simply connected if and only if there exist $\nu, M, \ell\in \NN $ such that $M\ge 2$ and for every $n\ge \ell$, we have
\begin{equation}\label{Eq:LDP}
 f_G(K,\lfloor n/M \rfloor, n) \le \nu.
\end{equation}
\end{lem}

\begin{proof}[Proof of Proposition \ref{Prop:AC}]
The implication (c) $\Rightarrow$ (b) is obvious, so we only need to prove (b) $\Rightarrow$ (a) $\Rightarrow$ (c).

Assume (b) holds, i.e., there exists $C\in \NN$ such that $f_{G,X}(K, Cm, n)\le Cg(Cn/m)+Cn/m +C$ for all $m\ge CK$. This implies that there are $\nu, \ell\in \NN$ such that $f_{G,X}(K, \lfloor n/2\rfloor , n)\le \nu$ for every $n\ge \ell$. Applying Lemma \ref{Lem:Dru}, we obtain (a).

Finally, assume that (a) holds. Let $\nu, M, \ell\in \NN $ be the constants provided by Lemma~\ref{Lem:Dru}. Fix any $m>\ell$ and let $\mu = \log_M n/m $. We have $\lfloor n/M^{\mu -1}\rfloor  \ge m>\ell$. Therefore, we can iterate the inequality (\ref{Eq:LDP}) $\lfloor\mu \rfloor$ times; this yields the inequality $f_{G,X}(k, m, n) \preccurlyeq \nu ^{\log_M n/m} = (n/m)^ {\log _M \nu}$.
\end{proof}

As we already mentioned in the introduction, Proposition \ref{Prop:AC} can be used to exhibit examples of finitely presented groups for which the left inequality in (\ref{fpspectrum}) is strict. For example, let $G$ be the finitely presented group with Dehn function $\delta_G(n)\approx n^5$ and at least one non-simply connected asymptotic cone constructed in \cite[Theorem 3.1]{Bri}. The equivalence $f_G(k,m,n)\sim \delta_G(n)/\delta_G(m)$ would imply $f_G(k,m,n) \sim (n/m)^5$ by Lemma \ref{Lem:ab}, which contradicts Proposition \ref{Prop:AC}. Moreover, we have the following.

\begin{cor}\label{Cor:H3}
Let $$G=\langle a,b, s,t\mid s^{-1}as=a,\, t^{-1}at=a,\, s^{-1}bs=ba,\, t^{-1}bt=ba \rangle,$$
$$ H=\langle a,b,c \mid [a,b]=c, \, [a,c]=[b,c]=1\rangle .$$ The groups $G$ and $H$ have a cubic Dehn function but $f_G(k,m,n)\not \sim f_H(k,m,n)$.
\end{cor}

\begin{proof}
Olshanskii and Sapir showed that $\delta _G(n)$ is cubic and has a non-simply connected asymptotic cone \cite{OS}. On the other hand, it is well-known that the $3$-dimensional Heisenberg group $H_3$ has cubic Dehn function (see, for example, \cite{Ger}) and simply connected asymptotic cones (see, for example, \cite{Pan}). By Proposition \ref{Prop:AC}, the simple connectivity of all asymptotic cones of a finitely presented group can be detected by using the equivalence class of the Dehn spectrum. Therefore, $f_G(k,m,n)$ cannot be equivalent to $f_H(k,m,n)$.
\end{proof}

\begin{rem} The standard method of computing the Dehn function of $H$ via combing suggests that $f_H(k,m,n)\sim (n/m)^3$. We leave it to the interested reader to verify the details. It would also be nice to compute exactly the isoperimetric spectra of other finitely generated nilpotent groups, and the Olshanskii-Sapir group $G$. 
\end{rem}


\section{Groups with linear Dehn spectrum}\label{Sec:GwLS}



\subsection{Limits of hyperbolic groups}

Recall that a geodesic metric space $S$ is said to be $\delta$-hyperbolic for some $\delta\ge 0$ if every geodesic triangle in $S$ is $\delta$-thin; that is, every side of the triangle is contained in the union of the closed $\delta$-neighborhoods of the other two sides. The metric space is said to be {\it hyperbolic} if it is $\delta$-hyperbolic for some $\delta \geq 0$. A group $G$ is \emph{hyperbolic} if its Cayley graph $\Cay(G,X)$ with respect to some (equivalently, any) finite generating set $X$ is hyperbolic.

In this section, we study the Dehn spectrum of groups that can be approximated by hyperbolic ones in a certain sense. We begin by recalling an analog of the Dehn algorithm for hyperbolic graphs.

\begin{lem}[]\label{Lem:Dhyp}
Let $\Gamma $ be a $\delta $-hyperbolic graph for some $\delta\in \NN$. For every loop $c$ in $\Gamma$, the following hold.
\begin{enumerate}
\item[(a)] There exists a subpath $p$ of $c$  such that $\d (p_-, p_+)\le \ell (p)-1$ and $\d (p_-, p_+)+ \ell (p)\le 16\delta$.
\item[(b)] The loop $c$ admits a combinatorial $16\delta$-filling of area at most $\ell(c)$.
\end{enumerate}
\end{lem}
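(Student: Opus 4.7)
The plan is to take (a) as given (it is exactly \cite[Chapter III.H, Lemma 2.6]{BH}) and prove (b) by induction on $\ell(c)$, using (a) as a Dehn-style reduction move. For the base case $\ell(c)\le 16\delta$, I would take $D$ to be a single cycle of length $\ell(c)$ bounding one $2$-cell and map $\partial D$ onto $c$; this gives a combinatorial $16\delta$-filling of area $1\le \ell(c)$ (the degenerate loop $\ell(c)=0$ is filled by a point of area $0$).

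For the inductive step ($\ell(c)>16\delta$), the idea is to apply (a) to extract a subpath $p$ of $c$ with $\d(p_-,p_+)\le \ell(p)-1$ and $\ell(p)+\d(p_-,p_+)\le 16\delta$. Write $c=c_1 p c_2$, fix a geodesic $q$ from $p_-$ to $p_+$ in $\Gamma$, and form the shortened loop $c'=c_1 q c_2$. The length strictly drops:
$$
\ell(c')=\ell(c)-(\ell(p)-\d(p_-,p_+))\le \ell(c)-1,
$$
so the inductive hypothesis yields a combinatorial $16\delta$-filling $\phi'\colon D'\to\Gamma$ of $c'$ with $Area(\phi')\le \ell(c')$. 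I would then recover a filling of $c$ by attaching a single \emph{Dehn face}: after subdividing edges of $D'$ if needed (which does not affect $Area$), identify the subpath $q'$ of $\partial D'$ of $\ell(q)$ edges that $\phi'$ sends to $q$, and glue on the exterior side a new $2$-cell whose boundary consists of $q'$ together with $\ell(p)$ fresh edges that $\phi$ maps onto $p$. The image of the boundary of this new face has length $\ell(p)+\ell(q)=\ell(p)+\d(p_-,p_+)\le 16\delta$, so the extension $\phi\colon D\to\Gamma$ is a combinatorial $16\delta$-filling of $c$ of area $Area(\phi')+1\le \ell(c')+1\le \ell(c)$, as required.

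The only genuinely delicate step is the combinatorial bookkeeping of this gluing: verifying that the attachment can be performed planarly (so $D$ is a finite connected plane graph), that the extended $\phi$ is still a generalized combinatorial map in the sense of Definition \ref{Def:k-fil}, and that one can always arrange $q'\subseteq \partial D'$ to match $q$ edge-by-edge after a harmless subdivision. This is routine because $q'$ sits on the boundary of the exterior face of $D'$ by construction, and any edge of $D'$ can be subdivided and the new edges mapped constantly to vertices without changing the $16\delta$-filling property or the area.
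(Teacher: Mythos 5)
Your proposal is correct and matches the paper's intended argument: the paper simply cites (a) from \cite[Chapter III.H, Lemma 2.6]{BH} and asserts that (b) ``easily follows from the first one by induction,'' which is exactly your induction on $\ell(c)$ with a Dehn-style reduction face glued back along the geodesic shortcut. The only cosmetic remark is that the subwalk $q'$ of $\partial D'$ mapping to $q$ need not have exactly $\ell(q)$ edges (it may include edges sent to vertices), but this does not affect the bound $\ell(\phi(\partial F))\le \ell(p)+\d(p_-,p_+)\le 16\delta$ or the area count.
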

\begin{proof}
 The proof of (a) can be found in {\cite[Chapter III.H, Lemma 2.6]{BH}}; (b) follows from (a) by induction.   
\end{proof}

Let $\Gamma $ be a graph and let $s\in \NN$. The \emph{$s$-expansion} of $\Gamma$, denoted by $\Gamma^s$, is the graph obtained from $\Gamma$ by adding edges between every pair of vertices $a,b\in V(\Gamma)$ such that $1<\d_\Gamma (a,b)\le s$, where $\d_\Gamma$ denotes the metric on $\Gamma$.

\begin{lem}[{\cite[Lemma 4.2]{MO}}]\label{Lem:MO}
Let $\Gamma $ be a $\delta$-hyperbolic graph. For every $s\in \NN$, the graph $\Gamma^s$ is $(\delta/s+6)$-hyperbolic.
\end{lem}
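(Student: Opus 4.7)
The plan rests on a tight comparison of distances between $\Gamma$ and $\Gamma^s$ together with a transfer of hyperbolicity through the Gromov $4$-point condition. First, I would establish that for every pair of vertices $u,v\in V(\Gamma)=V(\Gamma^s)$,
\[
\frac{\d_\Gamma(u,v)}{s}\;\le\;\d_{\Gamma^s}(u,v)\;\le\;\frac{\d_\Gamma(u,v)}{s}+1.
\]
The lower bound follows by lifting any $n$-edge path in $\Gamma^s$ to a $\Gamma$-path of length at most $sn$: every edge of $\Gamma^s$ joins vertices at $\Gamma$-distance at most $s$, and so may be replaced by a $\Gamma$-geodesic of that length. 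The upper bound is obtained by subdividing a $\Gamma$-geodesic of length $d$ into $\lceil d/s\rceil$ pieces of length at most $s$; consecutive endpoints are then joined by a single $\Gamma^s$-edge.

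Next, I would pass to the Gromov $4$-point characterization of hyperbolicity, which behaves better than the slim-triangle definition under small perturbations of the metric. The displayed comparison immediately yields
\[
\bigl|(u|v)_w^{\Gamma^s}-\tfrac{1}{s}(u|v)_w^{\Gamma}\bigr|\;\le\;\tfrac{3}{2}
\]
for any triple of vertices $u,v,w$, since each of the three terms in the Gromov product contributes an additive error of at most $1$. If $\Gamma$ is $\delta$-hyperbolic in the slim-triangle sense, then its Gromov products satisfy a $4$-point inequality with some constant comparable to $\delta$; plugging in the comparison gives the same inequality for $\Gamma^s$ with a constant of the form $\delta/s+O(1)$ on vertices.

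Finally, I would translate the $4$-point condition back into the slim-triangle condition for $\Gamma^s$ and extend vertex-to-vertex estimates to arbitrary points of the graph, which lie within $1/2$ of a vertex along an edge. The main obstacle, and what dictates the precise additive constant $6$, is careful bookkeeping through the slim-triangle/$4$-point passage: the standard black-box equivalence costs factors of order $4$, which would inflate $\delta/s$ to $O(\delta/s)$ and require a larger additive term. To secure the stated bound one either invokes a sharp form of the $4$-point/slim-triangle equivalence, or, cleaner still, argues directly by lifting a geodesic triangle in $\Gamma^s$ to a triangle in $\Gamma$ whose three sides are $(1,s)$-quasi-geodesics and then applies $\delta$-slimness of the nearby true geodesic triangle in $\Gamma$. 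In either approach, the multiplicative factor $1/s$ in front of $\delta$ is preserved because the lift scales lengths by exactly $s$, while the additive $+6$ absorbs the rounding errors in the distance comparison, the extension from vertices to interior points of edges, and the additive defect of the quasi-geodesic lifts near actual geodesics.
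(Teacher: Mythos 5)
You should first note that the paper does not prove this lemma at all: it is quoted verbatim from \cite[Lemma 4.2]{MO}, so your argument can only be judged on its own merits. Your first two steps are correct and are indeed the heart of the matter: the comparison $\d_\Gamma(u,v)/s\le \d_{\Gamma^s}(u,v)\le \d_\Gamma(u,v)/s+1$ on vertices is right, and it does transfer the Gromov four-point inequality with the factor $1/s$ on $\delta$ intact and only a bounded additive loss. If hyperbolicity is measured by the four-point condition (which is effectively how the cited source obtains such a clean constant), then your Gromov-product computation, together with the remark that an arbitrary point of a graph lies within $1/2$ of a vertex, essentially finishes the proof.

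The gap is in your last paragraph, i.e.\ precisely where the stated constant $\delta/s+6$ for the slim-triangle definition used in this paper has to be produced. Neither of your two proposed completions delivers it. There is no ``sharp'' (constant-preserving) equivalence between the four-point and slim-triangle conditions: the optimal conversion constants are strictly larger than $1$ in both directions, so the round trip gives $(C\delta/s+C')$-slimness with a universal $C>1$, not $\delta/s+6$. The direct lifting route has the same defect: a $\Gamma^s$-geodesic lifts to a path in $\Gamma$ whose length exceeds the distance between its endpoints by at most $s$, and the Hausdorff distance from such a path to a genuine geodesic is bounded only by a quantity of the form $s/2+c\delta$ (the Gromov product at any point of the lift is at most $s/2$, and converting a Gromov product into a distance to the geodesic costs a multiple of $\delta$). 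That $c\delta$ term is not a ``rounding error'' that the additive $6$ can absorb: after dividing by $s$ it inflates the coefficient of $\delta/s$ to something like $1+2c>1$. So your argument, as written, proves that $\Gamma^s$ is $(C\delta/s+C')$-hyperbolic for universal constants, which is weaker than the lemma as stated (it would still suffice for the paper's applications, e.g.\ Corollary \ref{Cor:fhyp}, after adjusting the numerical constants there, but it is not the claimed bound). To get the stated constant you must either take the four-point condition as the definition of hyperbolicity and track the small additive losses explicitly, or give a genuinely sharper direct argument; that step is exactly what is missing.
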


\begin{cor}\label{Cor:fhyp}
Let $\delta \in \NN$ and let $\Gamma $ be a $\delta$-hyperbolic graph. For every integer $m\ge 200\delta$, every loop $c$ in $\Gamma$ admits a combinatorial $m$-filling of area at most $400\lceil\ell(c)/m\rceil$.
\end{cor}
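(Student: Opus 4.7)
The plan is to reduce the claim to the Dehn-style algorithm in Lemma~\ref{Lem:Dhyp}(b), applied not to $\Gamma$ directly but to its expansion $\Gamma^s$ (Lemma~\ref{Lem:MO}) for an appropriately chosen $s$. Write $n=\ell(c)$ and set $s=\lfloor m/200\rfloor$. Since $m\geq 200\delta$, we have $s\geq \lfloor 200\delta/200\rfloor=\delta$; also, a short case analysis (writing $m=200q+r$ with $0\leq r<200$ and $q\geq 1$) shows that $\lfloor m/200\rfloor\geq m/400$ for every $m\geq 200$. The corner case $\delta=0$ is trivial: Lemma~\ref{Lem:Dhyp}(b) yields a $0$-filling of area $0$. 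From now on I assume $\delta\geq 1$, so that $m\geq 200$ and $s\geq 1$.

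By Lemma~\ref{Lem:MO}, $\Gamma^s$ is $7$-hyperbolic, since $\delta/s\leq 1$. Decompose $c=c_1c_2\cdots c_r$ into consecutive subpaths with $r=\lceil n/s\rceil$ and $1\leq \ell(c_i)\leq s$ for each $i$, and set $v_i=(c_i)_+$ (indices cyclic, so $v_0=v_r$). Since $\d_\Gamma(v_{i-1},v_i)\leq \ell(c_i)\leq s$, each pair $v_{i-1},v_i$ is at distance at most $1$ in $\Gamma^s$. Hence there is a loop $c'$ in $\Gamma^s$ of length at most $r$ visiting $v_0,v_1,\dots,v_r=v_0$ in order, and Lemma~\ref{Lem:Dhyp}(b) applied in $\Gamma^s$ produces a combinatorial $112$-filling $\phi'\colon D'\to\Gamma^s$ of $c'$ with $Area(\phi')\leq \ell_{\Gamma^s}(c')$.

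I would then lift $\phi'$ to a combinatorial filling of $c$ in $\Gamma$ by replacing each edge of $D'$ that is mapped to an edge of $\Gamma^s$ with a path in $\Gamma$ of length at most $s$: along $\partial D'$, I use the actual subpaths $c_i$ of $c$; on interior edges, I use any geodesic in $\Gamma$ between the corresponding endpoints. After subdividing edges accordingly, every face of $D'$ has perimeter at most $112s\leq m$ in $\Gamma$. For each index $i$ with $v_{i-1}=v_i$ (a \emph{collapsed} segment), the loop $c_i$ is not yet a face of the lifted diagram; I attach a single extra $2$-cell of perimeter at most $s\leq m$ bounded by $c_i$ at the vertex $v_{i-1}$. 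Writing $k$ for the number of collapsed segments, $\ell_{\Gamma^s}(c')\leq r-k$, so the total number of faces is at most $(r-k)+k=r$.

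This yields a combinatorial $m$-filling of $c$ of area at most $r=\lceil n/s\rceil\leq \lceil 400n/m\rceil\leq 400\lceil n/m\rceil$, as required. The main conceptual obstacle is to keep the boundary of the lifted filling equal to $c$ itself rather than to a coarse geodesic proxy; the device of attaching "petal" $2$-cells for each collapsed segment resolves this without spoiling the area count. The choice $s=\lfloor m/200\rfloor$ is calibrated to simultaneously ensure $s\geq\delta$ (so that $\Gamma^s$ is $7$-hyperbolic and Lemma~\ref{Lem:Dhyp}(b) delivers $112$-fillings there), $112s\leq m$ (so face perimeters lift correctly to $\Gamma$), and $s\geq m/400$ (so the final area bound comes out as $400\lceil n/m\rceil$).
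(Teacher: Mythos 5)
Your strategy (pass to the expansion $\Gamma^s$, run the Dehn algorithm of Lemma \ref{Lem:Dhyp} there, then pull the filling back to $\Gamma$) is the same as the paper's, but the pull-back step has a genuine gap. You substitute the actual subpaths $c_i$ of $c$ for the boundary edges of $D'$. This tacitly assumes that distinct positions of the loop $c'$ are carried by distinct edges of $D'$, which is false in general: $D'$ need not be a disc, and a single edge of $D'$ may be traversed twice by the boundary walk $\partial D'$ (Definition \ref{Def:k-fil} only requires $\phi'(\partial D')$ to coincide with $c'$ as a path). Concretely, if $c$ travels from $v_{i-1}$ to $v_i$ along $c_i$ and later returns from $v_i$ to $v_{i-1}$ along a different subpath $c_j$ of length at most $s$, then $c'$ uses the edges $e_i$ and $e_j=e_i^{-1}$ of $\Gamma^s$ (there is only one expansion edge between that pair of vertices), and a perfectly legitimate $112$-filling of a backtracking portion of $c'$ is a path with no faces whose single edge is traversed twice by $\partial D'$. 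Whatever path of $\Gamma$ you substitute for that edge, the lifted boundary reads that path followed later by its inverse, so it cannot read $c_i$ at one position and $c_j$ at the other; condition (a) of Definition \ref{Def:k-fil} fails. Your petal cells repair only the collapsed segments $v_{i-1}=v_i$, not this.

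This is exactly the trap the paper's machinery is built to avoid: instead of gluing $c$ directly into $\partial D'$, the paper fills a coarse $\alpha$-image $\tilde c$ made of geodesics, where Definition \ref{Def:im} explicitly requires $\tilde c_i=\tilde c_j^{\pm1}$ whenever $e_i=e_j^{\pm1}$ (so the edge substitutions in Lemma \ref{Lem:indf} are well defined), and then attaches a collar face of perimeter at most $2s$ for \emph{every} index $i$, as in the proof of Theorem \ref{Thm:QIG}. That correction costs an extra $r$ faces, i.e.\ area about $2r$ instead of $r$, and the paper calibrates $s=\lfloor (m-16\delta)/100\rfloor$ accordingly. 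Your argument can be repaired along the same lines (collar faces for all segments, or bigon faces splitting the doubly traversed boundary edges), but not with your constants: with $s=\lfloor m/200\rfloor$ the area budget $r\le 400\lceil\ell(c)/m\rceil$ has no slack for the correction faces, so $s$ must be rechosen together with the area count. (A minor additional point: for $\delta=0$, Lemma \ref{Lem:Dhyp}(b) gives area at most $\ell(c)$, not $0$; this corner case is anyway irrelevant since $\delta\in\NN$.)
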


\begin{proof}
Let $s = \lfloor(m-16\delta)/100\rfloor$. Note that $s>0$ for every $m\ge 200 \delta$. We think of $\Gamma $ as a subgraph of the $s$-expansion $\Gamma^{s}$. Let $r=\lceil \ell(c)/s \rceil$ and let $c=c_1\ldots c_r$, where $\ell(c_i)\le s$ for all $i$. Connecting $(c_i)_-$ to $(c_{i})_+$ by an edge $e_i$ in $\Gamma ^{s}$, we obtain a loop $d=e_1\ldots e_r$ in $\Gamma ^{s}$. By Lemma~\ref{Lem:MO}, $\Gamma ^{s}$ is $(\delta/s +6)$-hyperbolic. Therefore,  $\Area_{16\delta/s +96} (d)\le \ell(d)=r$ by Lemma~\ref{Lem:Dhyp}.

Let $\alpha\colon V(\Gamma^s)\to V(\Gamma)$ denote the identity map and let $\tilde c=\tilde c_1 \cdots \tilde c_r$ be a coarse $\alpha$-image of $d$ (see Definition~\ref{Def:im}); since every $\tilde c_i$ is geodesic, we have 
\begin{equation}\label{Eq:lcit}
\ell(\tilde c_i)\le \ell(c_i)\le s,
\end{equation}
for all $i=1, \ldots, r$.  Note that $\alpha$ is $(s,0)$-Lipschitz. By Lemma \ref{Lem:indf}, $\tilde c$ admits a combinatorial $s(16\delta/s +96)$-filling $\tilde \phi\colon \tilde D\to \Gamma$ of area at most $r$. Since 
$$
s(16\delta/s+96)=16\delta +96s =16\delta +96 \lfloor(m-16\delta)/100\rfloor < m,
$$
we conclude that $\tilde \phi $ is an $m$-filling. 

\begin{figure}\label{Fig:FigD}
  \begin{center}
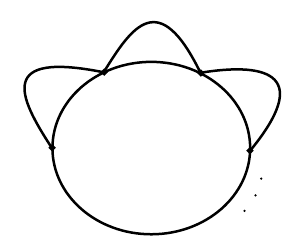
  \end{center}
  \vspace{-3mm}
  \caption{Modifying $\tilde D$ }
\end{figure}

We now transform this filling to a combinatorial $m$-filling $\phi\colon D\to \Gamma$ of $c$ by using a simplified version of the same trick as in the proof of Theorem \ref{Thm:QIG}. More precisely, we first construct $D$ by gluing additional faces $K_i$ ($i=1, \ldots , r$) along the boundary of $\tilde D$ as on Fig.~\ref{Fig:FigD}, where $\partial K_i=a_i^{-1}b_i$, and $a_i$ is the segment of $\partial \tilde D$ mapped to $\tilde c_i$ by $\tilde \phi$. Further, we let $\phi$ be the extension of $\tilde \phi$ that maps $b_i$ to $c_i$ for each $i=1, \cdots r$. For each of the additional faces, we have $\ell(\phi(\partial K_i))\le \ell (c_i)+\ell(\tilde c_i) \le 2s\le m$ by (\ref{Eq:lcit}). Since $\tilde \phi$ was an $m$-filling of area at most $r$, we obtain that $\phi $ is also an $m$-filling of area at most
$$
2r=2\left\lceil \frac{\ell(c)}{s}\right\rceil=2\left\lceil \frac{\ell(c)}{\lfloor(m-16\delta)/100\rfloor}\right\rceil < 400\left\lceil \frac{\ell(c)}{m}\right\rceil. \qedhere
$$
\end{proof}

We now introduce auxiliary terminology.

\begin{defn}\label{Def:wah}
We say that a group $G$ is \emph{approximated by hyperbolic groups} if $G$ is the direct limit of a sequence
$$
G_1\stackrel{\e_1}\longrightarrow G_2 \stackrel{\e_2}\longrightarrow \ldots,
$$
where each group $G_i$ is hyperbolic and each homomorphism $\e_i$ is surjective. Equivalently, $G$ admits a presentation
\begin{equation}\label{Eq:Glim}
\left\langle X\; \left| \;\, \bigcup_{i=1}^\infty \mathcal R_i \right.\right\rangle,
\end{equation}
where $|X|<\infty $, $\mathcal R_1\subseteq \mathcal R_2\subseteq \ldots$,  and the groups $G_i=\langle X\mid \mathcal R_i\rangle$ are hyperbolic for all $i\in \NN$. Furthermore, we say that $G$ is \emph{well-approximated by hyperbolic groups} if, in addition, the above sequence can be chosen so that there exists a constant $C\in \NN$ such that the following conditions hold for all $i\in \NN$.
\begin{enumerate}
\item[(a)] The normal closure of $\mathcal R_i$ in $F(X)$ contains all words $w\in F(X)$ of length $\| w\|\le i$ representing the identity in $G$.
\item[(b)] The Cayley graph $\Cay(G_i,X)$ of the group $G_i=\langle X\mid \mathcal R_i\rangle$ is $Ci$-hyperbolic.
\end{enumerate}
\end{defn}

Conditions introduced above have been studied in group theory (sometimes implicitly) since the 1980s.

\begin{ex}
All lacunary hyperbolic groups introduced by Olshanskii, Osin, and Sapir are approximated by hyperbolic groups (see \cite[Theorem 1.1(3)]{OOS}). 
\end{ex}

Below, we show that the class of groups well-approximated by hyperbolic groups include finitely generated $C'(1/6)$ groups (see Corollary \ref{Cor:C16}), wreath products $K \wr \ZZ$, where a group $K$ is finite (see Theorem \ref{Thm:WP}), and free Burnside groups $B(m,n)$ of sufficiently large exponent $n$ (see Theorem \ref{Thm:Bmn}).

The main result of this section is the following proposition. We note that part (b) does not generalize to all groups approximated by hyperbolic groups (see Remark \ref{Rem:appr}).

\begin{prop}\label{Prop:LimHyp}
Suppose that $G$ is approximated by hyperbolic groups.
\begin{enumerate}
\item[(a)] For every $k\in \NN$, there exists $M=M(k)\in \NN$ such that $f_{G,X}(k,m,n)\le 400\lceil n/m\rceil $ for all $m\ge M$.
\item[(b)] If $G$ is well-approximated by hyperbolic groups, then $f_{G,X}(k,m,n)\sim n/m$.
\end{enumerate}
\end{prop}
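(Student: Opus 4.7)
The plan is to combine the hyperbolic filling bound of Corollary \ref{Cor:fhyp} with a lifting argument through the natural label--preserving graph homomorphisms $\pi_i\colon Cay(G_i,X)\to Cay(G,X)$ induced by the quotient maps $G_i\twoheadrightarrow G$. Throughout, the key virtue of $\pi_i$ is that it sends each edge of $Cay(G_i,X)$ labelled by $x$ from $g$ to $gx$ onto the edge labelled by $x$ from $\pi_i(g)$ to $\pi_i(g)x$, so it is a generalized combinatorial map that preserves labels and edge lengths.

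For part (a), I would first exploit a simple compactness observation: since $|X|<\infty$, the set $\mathcal S_k$ is finite, and since each of its elements lies in $\ll\bigcup_i\mathcal R_i\rr$, while the sequence of normal closures $\ll\mathcal R_1\rr\subseteq\ll\mathcal R_2\rr\subseteq\ldots$ is increasing, there exists $i=i(k)\in\NN$ such that $\mathcal S_k\subseteq\ll\mathcal R_i\rr$, hence $\ll\mathcal S_k\rr\subseteq\ll\mathcal R_i\rr$. Let $\delta\in\NN$ be a hyperbolicity constant for $\Gamma_i:=Cay(G_i,X)$. Given a $k$-contractible loop $c$ in $Cay(G,X)$ of length $n$, based (without loss of generality) at $1_G$ and with label $w\equiv\Lab(c)$, the word $w$ lies in $\ll\mathcal R_i\rr$ and so labels an honest loop $c_i$ in $\Gamma_i$ based at $1_{G_i}$ with $\pi_i\circ c_i=c$. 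For any $m\ge M(k):=\max(k,200\delta)$, Corollary \ref{Cor:fhyp} yields a combinatorial $m$-filling $\phi\colon D\to\Gamma_i$ of $c_i$ of area at most $400\lceil n/m\rceil$; composing with $\pi_i$ then gives a combinatorial $m$-filling of $c$ of the same area, which establishes (a).

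For part (b), the well-approximation hypothesis allows me to take $i(k)=k$ in the above argument: condition (a) of Definition \ref{Def:wah} supplies $\mathcal S_k\subseteq\ll\mathcal R_k\rr$, while condition (b) supplies a hyperbolicity constant $Ck$ for $\Gamma_k$. The previous reasoning therefore yields
\[
f_{G,X}(k,m,n)\le 400\lceil n/m\rceil\qquad\text{for all }m\ge 200Ck,
\]
and a short bookkeeping with Definition \ref{Def:ord} (e.g.\ with the constant $200C$) turns this into $f_{G,X}\preccurlyeq n/m$. The reverse comparison $n/m\preccurlyeq f_{G,X}$ is in fact automatic from Definition \ref{Def:ord}: since the term $Cn/m$ already appears on the right-hand side of the defining inequality, one has $n/m\preccurlyeq g$ for every nonnegative $g\in\mathcal F$. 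Combining the two directions gives $f_{G,X}\sim n/m$.

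The only real point of care is the lifting step — verifying that $\pi_i$ is a label-preserving graph homomorphism that turns combinatorial $m$-fillings in $\Gamma_i$ into combinatorial $m$-fillings in $Cay(G,X)$ of the same area. Everything else reduces to the geometric input that the approximating Cayley graphs $\Gamma_i$ are hyperbolic, which is precisely what Corollary \ref{Cor:fhyp} needs, together with elementary tracking of constants in Definition \ref{Def:ord}.
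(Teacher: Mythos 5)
Your proposal is correct and follows essentially the same route as the paper: choose $i(k)$ with $\mathcal S_k\subseteq\ll\mathcal R_{i(k)}\rr$, apply Corollary \ref{Cor:fhyp} in the hyperbolic Cayley graph $Cay(G_{i(k)},X)$, transfer the filling back to $Cay(G,X)$, and in case (b) use $i(k)=k$ with hyperbolicity constant $Ck$ plus the trivial lower bound $n/m\preccurlyeq f_{G,X}$. The only difference is cosmetic: you make the transfer explicit via the label-preserving projection $\pi_i$, which the paper leaves implicit in the observation that every $w\in\ll\mathcal S_k\rr$ labels a loop in $Cay(G_{i(k)},X)$ and that relators of $G_{i(k)}$ of length at most $m$ lie in $\mathcal S_m$.
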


\begin{proof}
Let (\ref{Eq:Glim}) be a presentation of $G$ as in Definition \ref{Def:wah}. Fix any $k\in \NN$. There exists $i(k)$ such that $\S_k\subseteq \ll \mathcal R_{i(k)}\rr $, where $\S_k=\{w\in N_X \mid \| w\|\le k\}$, as defined by (\ref{Sk}). Let $M(k)=200 \delta_k$, where $\delta_k$ is the hyperbolicity constant of the Cayley graph $\Cay(G_{i(k)}, X)$. Note that every $w\in \ll \mathcal S_k\rr$ labels a loop in $\Cay(G_{i(k)}, X)$. Applying Corollary \ref{Cor:fhyp} to $\Cay(G_{i(k)}, X)$, we obtain
\begin{equation}\label{Eq:Aw}
\Area_{\mathcal S_m}(w) \le 400\left\lceil \| w\|/m\right\rceil.
\end{equation}
for all $m\ge M(k)$ and claim (a) follows.

If $G$ is well-approximated by hyperbolic groups, we can assume that $i(k)=k$ and $M(k)\le 200Ck$, where $C$ is the constant from Definition \ref{Def:wah}. Thus, (\ref{Eq:Aw}) holds for all $m\ge 200Ck$ and we obtain $f_{G,X}(k,m,n)\sim n/m$.
\end{proof}

Recall that a function $f(k,m,n)\in \mathcal F$ is essentially independent of $k$ if it is equivalent to a function $g\in \mathcal F$ that depends on $m$ and $n$ only.

\begin{cor}\label{Cor:LHk}
Suppose that $G$ is approximated by hyperbolic groups. If $f_G(k,m,n)$ is essentially independent of $k$, then $f_G(k,m,n)\sim n/m$.
\end{cor}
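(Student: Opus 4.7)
The plan is to use Proposition~\ref{Prop:LimHyp}(a) together with the essential-independence hypothesis to derive $f_G \preccurlyeq n/m$. The reverse inequality $n/m \preccurlyeq f_G$ is automatic from Definition~\ref{Def:ord}: since $f_G\ge 0$, we always have $n/(Cm)\le Cn/m \le C f_G(Ck,m,Cn) + Cn/m + C$ for any $C\ge 1$, so it suffices to establish the upper bound.

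Unpack essential independence: fix $g\colon \NN^2\to\mathbb R$ such that $\tilde g(k,m,n):=g(m,n)$ satisfies $f_G\sim\tilde g$, and choose $C\in\NN$ witnessing both directions of the equivalence. Specializing the direction $\tilde g\preccurlyeq f_G$ at $k=1$ gives
$$
g(Cm,n) \le C f_{G,X}(C,m,Cn) + Cn/m + C \qquad (m\ge C).
$$
Now apply Proposition~\ref{Prop:LimHyp}(a) at the \emph{single} fixed first coordinate $k=C$: there exists $M_0=M(C)$ such that $f_{G,X}(C,m,n)\le 400\lceil n/m\rceil$ whenever $m\ge M_0$. Combining the two inequalities for $m\ge \max\{C,M_0\}$ and reparametrizing $m\mapsto m/C$ in the first argument, we obtain a constant $A$ and a threshold $M_\ast$ with
$$
g(m,n)\le A\bigl(n/m + 1\bigr) \qquad (m\ge M_\ast).
$$

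Next, feed this linear bound for $g$ back into the other direction $f_G\preccurlyeq\tilde g$, namely
$$
f_{G,X}(k,Cm,n)\le C g(m,Cn) + Cn/m + C \qquad (m\ge Ck).
$$
Whenever $m\ge M_\ast$ and $m\ge Ck$, the right-hand side is at most $CA(Cn/m+1) + Cn/m + C = O(n/m+1)$. Reparametrizing $m\mapsto m/C$ once more turns this into a bound $f_{G,X}(k,m,n)\le B(n/m+1)$ valid for all $(k,m,n)$ with $m\ge \max\{C^2k,\, CM_\ast\}$; using $k\ge 1$, the single condition $m\ge C' k$ for $C':=\max\{C^2,CM_\ast,B\}$ implies both, and then $f_{G,X}(k,m,n)\le C'(n/m+1)$. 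Translating through Definition~\ref{Def:ord}, this is precisely $f_G\preccurlyeq n/m$, completing the proof.

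The argument is essentially a two-step bootstrap and the only minor obstacle is the constant bookkeeping across the two reparametrizations of $m$; what makes it run smoothly is that Proposition~\ref{Prop:LimHyp}(a) is invoked only once, at the fixed first argument $k=C$, after which essential independence of $k$ lets the resulting linear bound on $g$ propagate uniformly to all $k$.
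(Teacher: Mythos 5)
Your proposal is correct and follows essentially the same route as the paper: specialize the direction $\tilde g\preccurlyeq f_{G,X}$ at $k=1$, invoke Proposition~\ref{Prop:LimHyp}(a) once at the fixed first argument to get a linear bound on $g$, and then transport that bound back through $f_G\preccurlyeq \tilde g$ (the paper compresses this last step into ``the result follows,'' relying on transitivity of $\preccurlyeq$ and the automatic lower bound $n/m\preccurlyeq f_G$, exactly as you note). The only detail to keep in mind is that the reparametrizations $m\mapsto m/C$ implicitly use that $g$, being $\sim$-comparable in $\mathcal F$, is non-increasing in $m$ — a harmless point of bookkeeping.
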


\begin{proof}
Recall that $\mathbb T=\{ (k,m,n)\in \mathbb N\times \mathbb N\times\mathbb N\mid m\ge k \}$. Suppose that $f_G(k,m,n)\sim g(m,n)$. By definition, there exists a finite generating set $X$ of $G$ and a constant $C\in \NN$ such that $g(Cm,n)\le f_{G,X}(Ck,m,Cn) +Cn/m +C$ for all $(k,m,n)\in \mathbb T$ satisfying the inequality $m\ge Ck$. Let $k=1$ and let $M$ be the constant provided by Proposition \ref{Prop:LimHyp}. For every $m\ge \max\{ M, C\}$, we have
$$
g(Cm,n)\le f_{G,X}(C,m,Cn) +Cn/m +C\le 400\left\lceil \frac{Cn}{m}\right\rceil +C\frac{n}{m} +C \preceq n/m
$$
and the result follows.
\end{proof}

We conclude this section with the first examples of infinitely presented groups with a linear Dehn spectrum.
Recall that a word $w$ in the alphabet $X\cup X^{-1}=\{x_1, x_1^{-1}, x_2, x_2^{-1}, \ldots \} $ is \emph{reduced} if it contains no subwords of the form $x_ix_i^{-1}$ and $x_i^{-1}x_i$. A group presentation $\langle X\mid \mathcal R\rangle $
is said to be \emph{reduced} if all cyclic shifts of every word $R\in \mathcal R$ are reduced.

\begin{defn}\label{SCC}
A presentation $\langle X\mid \mathcal R\rangle $ satisfies the $C^\prime(\lambda)$ \emph{small cancellation condition} for some $\lambda \in [0, 1]$ if for any cyclic shifts $R_1\not\equiv R_2$ of some relators from $\mathcal R$ or their inverses, any common initial subword $U$ of $R_1$ and $R_2$ has length
$$
\| U\| < \lambda \min\{ \| R_1\|, \, \| R_2\|\}.
$$
\end{defn}

The result below was originally proved by Greendlinger \cite{Green} using combinatorial arguments. For a contemporary geometric proof, see \cite[Section 12]{book}.

\begin{lem}[Greendlinger lemma]\label{Lem:Green}
Suppose that a group $G$ has a reduced presentation $\langle X\mid \mathcal R\rangle $ satisfying $C^\prime (1/6)$. Then every non-empty reduced word in the alphabet $X\cup X^{-1}$ that represents $1$ in $G$ contains a subword $U$ such that $U$ is also a subword of a cyclic shift of some $R\in \mathcal R^{\pm 1}$ and $\| U\| > \|R\|/2$.
\end{lem}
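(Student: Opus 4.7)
The plan is to translate the algebraic statement into a geometric one via van Kampen diagrams, and then invoke the standard structural result from small cancellation theory.

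By the van Kampen lemma applied to $w \in \ll \mathcal R \rr$, there is a van Kampen diagram $\Delta$ over $\langle X \mid \mathcal R \rangle$ with $\Lab(\partial \Delta) \equiv w$ read from some base vertex. I would fix $\Delta$ of minimal area among all such diagrams. If $\Delta$ consists of a single face $\Pi$, then $\Lab(\partial \Pi)$ is a cyclic shift of some $R^{\pm 1} \in \mathcal R^{\pm 1}$ and the entire word $w$ itself does the job; so assume $\Delta$ has at least two faces.

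The key geometric claim to establish is that there exists a face $\Pi$ of $\Delta$ whose boundary $\partial \Pi$ contains a subpath $u$ which is also a subpath of $\partial \Delta$, with $\ell(u) > \|R\|/2$, where $R$ is the relator labeling $\partial \Pi$. Given such a $u$, its label $U \equiv \Lab(u)$ is a subword of $w = \Lab(\partial \Delta)$ (up to a cyclic shift of $w$, which is harmless since Greendlinger's statement is cyclic in nature) and simultaneously, as $\Lab(\partial \Pi)$ is a cyclic shift of $R^{\pm 1}$, a subword of a cyclic shift of $R^{\pm 1}$ of length greater than $\|R\|/2$, which is the desired conclusion.

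To prove the claim, I would use two standard features of minimal $C^\prime(1/6)$ diagrams: (i) any maximal common arc of two adjacent face boundaries (a \emph{piece}) has length strictly less than $\|R\|/6$ in each of the two bounding relators $R$, which follows from the $C^\prime(1/6)$ hypothesis together with the minimality of $\Delta$ and reducedness of the presentation (the latter two ruling out adjacent faces whose shared arcs cancel); and (ii) $\partial \Pi$ decomposes cyclically as an alternation of pieces and maximal subpaths lying on $\partial \Delta$. If no face had an exterior subpath of length exceeding $\|R\|/2$, then pieces would account for more than half of every face boundary, so each face would have at least four interior neighbors. A combinatorial Gauss--Bonnet (equivalently, Euler characteristic) computation on the planar complex $\Delta$ then yields a contradiction. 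Alternatively, one can appeal to Strebel's classification of reduced $C^\prime(1/6)$ diagrams, which directly produces at least two such ``Greendlinger faces''.

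The hard part is the technical bookkeeping underlying (i)--(ii): $\partial \Delta$ need not be a simple loop, a face may touch $\partial \Delta$ in several disjoint arcs, and $\Delta$ may contain cut vertices, bigons, or degenerate subdiagrams; minimality and reducedness of the presentation must be invoked precisely to rule out pathological adjacencies and to ensure that pieces (not merely common labels) have the claimed length bound. Handling these subtleties carefully is what occupies most of the detailed argument in \cite[Section 12]{book}, which is the geometric proof referenced.
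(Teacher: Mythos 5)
The paper does not actually prove this lemma: it is quoted as a classical result, with references to Greendlinger's original combinatorial argument and to the geometric proof in \cite[Section 12]{book}. Your outline follows exactly that geometric route (minimal-area van Kampen diagram, pieces controlled by $C^\prime(1/6)$, combinatorial Gauss--Bonnet), so the overall strategy is the intended one.

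Two steps in your sketch, however, do not work as written. First, the inference ``if no face has an exterior subpath of length exceeding $\|R\|/2$, then pieces account for more than half of every face boundary, so each face has at least four interior neighbours'' is invalid: a face may meet $\partial\Delta$ in several disjoint maximal arcs, each of length at most $\|R\|/2$ but jointly covering more than half of $\partial\Pi$, in which case the face can have as few as two interior neighbours. The genuine curvature argument does not pass through this implication; it shows directly (Strebel's classification, or the $i(D)\le 3$ count in Lyndon--Schupp) that some boundary face has a \emph{single} exterior arc and at most three pieces, whence that arc has length greater than $(1-3\cdot\tfrac16)\|R\|=\|R\|/2$. Second, the remark that passing to a cyclic shift of $w$ is ``harmless since Greendlinger's statement is cyclic in nature'' does not match the statement being proved: the lemma asserts that $U$ is a subword of $w$ itself, and $w$ is only assumed reduced, not cyclically reduced, so an exterior arc straddling the basepoint of $\partial\Delta$ yields only a subword of a cyclic shift of $w$. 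The standard repairs are either to replace $w$ first by its cyclically reduced core (which is a genuine subword of $w$) or to invoke the strong form of the lemma, which produces at least two Greendlinger faces with disjoint exterior arcs, at least one of which avoids the basepoint. These fixes are routine and are carried out in the cited reference, but as written your argument leaves both points open.
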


\begin{cor}\label{Cor:C16}
Let $G$ be a finitely generated group admitting a (possibly infinite) presentation satisfying the $C^\prime(1/6)$ small cancellation condition. Then the Dehn spectrum of $G$ is linear.
\end{cor}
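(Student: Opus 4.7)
\textbf{Proof proposal for Corollary \ref{Cor:C16}.}
The plan is to show that $G$ is well-approximated by hyperbolic groups in the sense of Definition \ref{Def:wah}, and then invoke part (b) of Proposition \ref{Prop:LimHyp}. Fix a finite generating set $X$ and a $C^\prime(1/6)$ presentation $G=\langle X\mid \mathcal R\rangle$; we may assume without loss of generality that $\mathcal R$ consists of reduced words closed under cyclic shifts and inversion. For each $i\in\NN$ set
\[
\mathcal R_i=\{R\in\mathcal R:\|R\|\le 2i\},
\]
so that $\mathcal R_1\subseteq \mathcal R_2\subseteq\cdots$ and $\bigcup_i\mathcal R_i=\mathcal R$, yielding the filtration $G_i=\langle X\mid \mathcal R_i\rangle$ with surjective maps $G_i\to G_{i+1}$ and direct limit $G$.

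To verify condition (a) of Definition \ref{Def:wah}, I would prove by induction on $\|w\|$ that every word $w\in F(X)$ of length $\|w\|\le i$ representing $1$ in $G$ lies in $\ll\mathcal R_i\rr$. Reducing $w$ if necessary, the Greendlinger Lemma (Lemma \ref{Lem:Green}) produces a subword $U$ of $w$ that is also a subword of some cyclic shift of $R^{\pm 1}\in\mathcal R^{\pm 1}$ with $\|U\|>\|R\|/2$. Since $\|R\|<2\|U\|\le 2\|w\|\le 2i$, we have $R\in\mathcal R_i$. Writing $R$ as $R\equiv UV$ (up to a cyclic shift), we may replace the subword $U$ in $w$ by $V^{-1}$, which has length strictly less than $\|U\|$; this produces a new word $w^\prime$ with $w=f^{-1}R^{\pm 1}f\cdot w^\prime$ in $F(X)$ for some conjugator $f$, and $\|w^\prime\|<\|w\|$ after free reduction. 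The induction hypothesis applied to $w^\prime$ then gives $w\in\ll\mathcal R_i\rr$.

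For condition (b), I would use the fact that $\mathcal R_i$ still satisfies $C^\prime(1/6)$ (this condition is inherited by subsets) and has bounded maximum relator length. Standard small cancellation theory yields a constant $C_0$, depending only on $|X|$ and the small cancellation parameter, such that any $C^\prime(1/6)$-presentation with maximum relator length $L$ gives a Cayley graph that is $C_0L$-hyperbolic; indeed, this follows from the standard linear Dehn function estimate $\delta_{G_i}(n)\le C_0^\prime Ln$ and the Bridson--Haefliger conversion of a linear isoperimetric inequality into hyperbolicity with the appropriate linear dependence on the presentation constants (see, e.g., \cite[Chapter III.H]{BH}). Applied to $\mathcal R_i$ with $L\le 2i$, this gives that $\Cay(G_i,X)$ is $2C_0i$-hyperbolic, establishing (b) with $C=2C_0$.

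With both conditions of Definition \ref{Def:wah} in hand, Proposition \ref{Prop:LimHyp}(b) gives $f_{G,X}(k,m,n)\sim n/m$, which is the desired linearity of the isoperimetric spectrum. The main obstacle I anticipate is the last step of the hyperbolicity verification: while the inheritance of $C^\prime(1/6)$ and the linear Dehn function estimate are standard, obtaining the explicit linear dependence of the hyperbolicity constant on the maximum relator length requires invoking a quantitative form of the correspondence between linear isoperimetric inequalities and hyperbolicity, rather than its qualitative version; in the event that a purely citable quantitative statement is unavailable, one may instead apply Corollary \ref{Cor:fhyp} directly inside each $\Cay(G_i,X)$ at scale $m\ge 200\cdot 2C_0\cdot i$, which yields the inequality $Area_{\mathcal S_m}(w)\le 400\lceil\|w\|/m\rceil$ for all $m\ge Ck$ and therefore bypasses the need to separately state a uniform hyperbolicity estimate.
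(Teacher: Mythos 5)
Your argument follows the paper's proof almost verbatim: truncate the relator set by length, verify condition (a) of Definition \ref{Def:wah} by iterating the Greendlinger Lemma, verify condition (b) by a hyperbolicity bound linear in the maximal relator length, and conclude via part (b) of Proposition \ref{Prop:LimHyp}. The one step you flag as uncertain is exactly where the paper does something cleaner: instead of routing through a linear Dehn function and a quantitative version of the linear-isoperimetric-inequality-implies-hyperbolicity theorem, it simply cites Strebel's Theorem 36 (\cite{Str}), which states that a finite $C'(1/6)$ presentation $\langle Y\mid\mathcal S\rangle$ has $\delta$-hyperbolic Cayley graph with $\delta=\max\{\|S\|\mid S\in\mathcal S\}$; with your $\mathcal R_i$ this gives $\delta\le 2i$ at once, so the hedge is unnecessary. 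Be aware, however, that your proposed fallback does not work as stated: Corollary \ref{Cor:fhyp} takes the hyperbolicity constant of $Cay(G_i,X)$ as an input, so applying it ``at scale $m\ge 200\cdot 2C_0\cdot i$'' presupposes the very bound $\delta_i\le 2C_0 i$ you were trying to avoid proving; it bypasses nothing. With the citation in place the proof is complete, and your choice of $\mathcal R_i=\{R\mid \|R\|\le 2i\}$ is a harmless (arguably more careful) variant of the paper's $\|R\|\le i$, since the Greendlinger subword only bounds $\|R\|$ by $2\|w\|$ and Definition \ref{Def:wah} absorbs such constants into $C$.
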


\begin{proof}
Let $G=\langle X\mid \mathcal R\rangle $ be a presentation satisfying the $C^\prime(1/6)$ small cancellation condition. Let $\mathcal R_i=\{ R\in \mathcal R\mid \| R\| \le 2i\}$. Iterated application of Lemma \ref{Lem:Green} implies that every word $w\in F(X)$ of length $\|w\|\le i$ representing $1$ in $G$ belongs to $\ll \mathcal R_i\rr$. It is also known that if $H=\langle Y\mid \mathcal S\rangle$ is a finite presentation satisfying $C^\prime(1/6)$, then $\Cay(H,Y)$ is $\delta$-hyperbolic for $\delta =\max\{ \|S\| \mid S\in \mathcal S\}$ (see Theorem 36 in \cite{Str}). Thus $G$ is well-approximated by hyperbolic groups and part (b) of Proposition \ref{Prop:LimHyp} applies.
\end{proof}

\begin{rem}
We expect the conclusion of the last corollary to hold well beyond the classical $C'(1/6)$ settings. For example, the same proof relying on \cite{Str} works for group presentations satisfying the $C'(1/4) \,\&\, T(4)$ condition. Furthermore, we expect the result to remain valid in appropriate graphical small cancellation settings. For purely non-metric conditions such as $C(7)$, the situation is less clear and merits further investigation. We neither define nor discuss these small cancellation conditions in our paper and refer the interested reader to \cite{Gru,Str} for more details.
\end{rem}


\subsection{Wreath products}

The main goal of this section is to prove part (a) of Theorem \ref{Thm:Lin}, which states that every wreath product $K \wr \mathbb{Z}$, with $K$ finite, has a linear Dehn spectrum. We begin by estimating hyperbolicity constants of groups acting on trees.

Let $G$ be a group generated by a set $X$. For every finite subset $S\subseteq G$, we define its \emph{radius} by
$$
\Rad_X(S)=\max\{ |g|_X \mid g\in S\},
$$
where $|g|_X$ is the word length of $g$ with respect to $X$.

\begin{prop}\label{Prop:Tree}
Let $G$ be a group acting by isometries on a tree $T$. Let $v$ be a vertex of $T$ and let $X$ be a generating set of $G$ such that
\begin{equation}\label{Eq:dT}
\d_T(v, xv)\le 1\;\;\; \forall\, x\in X\cup X^{-1}.
\end{equation}
Suppose that the stabilizer of $v$ is finite. Then the Cayley graph $\Cay(G,X)$ is $\delta$-hyperbolic for $\delta = 2\Rad_X(\Stab_G(v))$.
\end{prop}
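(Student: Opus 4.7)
The plan is to transfer hyperbolicity from $T$ to $Cay(G,X)$ via the orbit map. Write $r := Rad_X(Stab_G(v))$ and define a generalized combinatorial map $\Phi\colon Cay(G,X) \to T$ by $g \mapsto gv$ on vertices, extended over each edge $(g, gx)$ as a geodesic from $gv$ to $gxv$; by (\ref{Eq:dT}) the image of every edge is either a single vertex or a single edge of $T$. Two observations drive the whole argument. First, $\Phi$ has \emph{small fibers}: if $\Phi(g) = \Phi(h)$, then $g^{-1}h \in Stab_G(v)$, so $|g^{-1}h|_X \le r$. Second, because $T$ is a tree, for any path $q$ in $Cay(G,X)$ the $\Phi$-image is a connected subgraph of $T$ containing the unique $T$-geodesic between $\Phi(q_-)$ and $\Phi(q_+)$; in particular, every vertex of that $T$-geodesic is visited by the walk $\Phi(q_0), \Phi(q_1), \ldots$ through the successive vertices of $q$.

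The key technical step is an \emph{excursion lemma}: if $[a,b]$ is a geodesic in $Cay(G,X)$ with vertex sequence $g_0, \ldots, g_n$ and $\Phi(g_i)$ lies off the $T$-geodesic $[\Phi(a), \Phi(b)]$, let $u$ be the nearest-point projection of $\Phi(g_i)$ onto $[\Phi(a), \Phi(b)]$ and let $B$ be the component of $T \setminus \{u\}$ containing $\Phi(g_i)$. Take $i_1$ to be the largest $j < i$ with $\Phi(g_j) \notin B$ and $i_2$ the smallest such $j > i$; both exist since $\Phi(a), \Phi(b) \notin B$. Every edge of $T$ joining $B$ to its complement passes through $u$, so $\Phi(g_{i_1}) = \Phi(g_{i_2}) = u$. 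The small-fiber property then yields $i_2 - i_1 = |g_{i_1}^{-1} g_{i_2}|_X \le r$, and consequently $g_i$ is at distance at most $r$ in $Cay(G,X)$ from $g_{i_1} \in [a,b]$, whose image $\Phi(g_{i_1}) = u$ lies on $[\Phi(a), \Phi(b)]$.

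To prove thinness of geodesic triangles, consider $[a,b] \cup [b,c] \cup [c,a]$ and a vertex $p \in [a,b]$. Set $\alpha = \Phi(a)$, $\beta = \Phi(b)$, $\gamma = \Phi(c)$, and let $o$ be the center of the $T$-tripod on $\alpha, \beta, \gamma$; since endpoints are vertices of a simplicial tree, $o$ is a vertex. By the excursion lemma we may replace $p$ by a vertex $p_1 \in [a,b]$ at distance at most $r$ from $p$ with $\Phi(p_1) \in [\alpha, \beta]$. Then $\Phi(p_1)$ lies on $[\alpha, o] \subseteq [\alpha, \gamma]$ or on $[o, \beta] \subseteq [\beta, \gamma]$; applying the second observation to the opposite side of the triangle produces a vertex $p^* \in [a,c] \cup [b,c]$ with $\Phi(p^*) = \Phi(p_1)$. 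Small fibers again give $|p_1^{-1} p^*|_X \le r$, and the triangle inequality yields $|p^{-1} p^*|_X \le 2r$. A standard thickening absorbs midpoints of edges into the same constant, so $Cay(G,X)$ is $2r$-hyperbolic.

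The main obstacle is formulating and proving the excursion lemma cleanly: everything else reduces to a transparent case analysis based on which arm of the tripod the projection $\Phi(p_1)$ sits on. One should also remember that collapsed edges are harmless, since they can only shorten the walk in $T$ without affecting the tree-geodesic coverage property.
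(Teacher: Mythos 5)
Your proposal is correct and follows essentially the same route as the paper: both use the orbit map $g\mapsto gv$ extended to a generalized combinatorial map $Cay(G,X)\to T$, move along a side of the triangle to a vertex whose image lands on the tree-geodesic at cost at most $Rad_X(Stab_G(v))$ (your excursion lemma is just a more detailed proof of the paper's assertion that a suitable subpath $p_u$ with equal endpoint images exists), and then jump to a vertex with the same image on one of the other two sides at cost at most $Rad_X(Stab_G(v))$ via the stabilizer ("small fibers") bound. The only difference is expository: you carry out the component/separation argument in $T$ explicitly, while the paper states the corresponding claim more briefly; both treatments handle only graph vertices and absorb the half-edge issue into the constant.
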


\begin{proof}
We claim that the map $G\to V(T)$ defined by $g\mapsto gv$ for all $g\in V$ extends to a generalized combinatorial map $\psi\colon \Cay(G,X)\to T$. Indeed, suppose that some $g,h\in G$ are connected by an edge $e$ in $\Cay(G,X)$. Then $h=gx$ for some $x\in X\cup X^{-1}$. Using (\ref{Eq:dT}), we obtain $$\d_T(gv, hv)=\d_T(gv, gxv)=\d_T(v, xv)\le 1, $$ i.e., the vertices $gv$ and $hv$ are connected by an edge in $T$ or coincide. In the former case, we define $\psi (e)$ to be the edge connecting $gv$ and $hv$ in $T$; in the latter case, we let $\psi(e)=gv=hv$.

Note that for every path $s$ in $\Cay(G,X)$, its image $\psi(s)$  contains the geodesic path in $T$ connecting $\psi(s)_-$ to $\psi(s)_+$. We denote this geodesic path by $\widehat s$. Let now $\Delta =pqr$ be a triangle in $\Cay (G,X)$ with geodesic sides $p$, $q$, $r$ and let $u$ be a vertex on $p$. We will show that $u$ is within distance at most $\delta$ from $q\cup r$.

\begin{figure}
  \begin{center}
 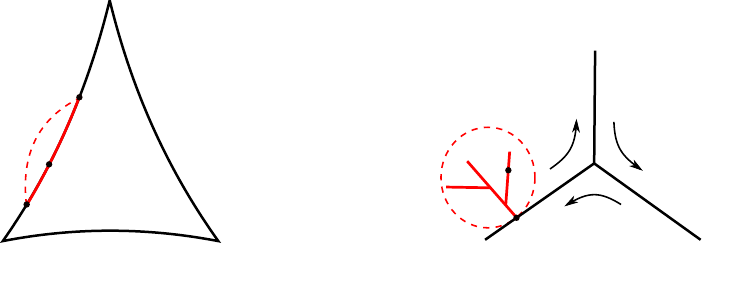
  \end{center}
  \vspace{-3mm}
  \caption{The map $\psi\colon \Cay(G,X) \to T$.}\label{Fig:6}
\end{figure}

In general, $\psi(u)$ may not lie on $\widehat p$. However, there always exists a subpath $p_u$ of $p$ containing $u$ such that $\psi(p_u)_-=\psi(p_u)_+\in \widehat p$ (if $u$ lies on $\widehat p$, $p_u$ reduces to one point). Shifting $\Delta $ by an isometry of $\Cay(G,X)$, we can assume that $(p_u)_-=1$ and, therefore, $\psi(p_u)_-=\psi(p_u)_+=v$ (see Fig. \ref{Fig:6}).

Let $g_u\in G$ be the element represented by $\Lab(p_u)$. We have $g_u\in \Stab_G(v)$. Thus, $u$ belongs to a geodesic path in $\Cay(G,X)$ connecting $1$ to an element of $\Stab_G(v)$. Hence, $|u|_X\le \Rad_X(\Stab_G(v))$. Since $T$ is a tree, we have $v\in \widehat q$ or $v\in \widehat r$. Therefore, there is a vertex $w\in q\cup r$ such that $\psi (w)=v$. Again, we have $w\in \Stab_G(v)$ and
$|w|_X\le \Rad_X(\Stab_G(v))$. Finally, we obtain $$\d_X(u,w)\le |u|_X+|w|_X \le 2\Rad_X(\Stab_G(v)). \qedhere$$
\end{proof}

Now we turn to wreath products. Let $K_0$ be a finite group. We consider the presentation
$$K_0=\la Y\mid \mathcal S \ra, $$ where $Y=K_0$ and $\mathcal S$ is the set of all words in $Y$ of length at most $3$ representing $1$ in $K_0$. For elements $x$, $y$ of some group, we abbreviate $x^{-1}yx$ by $y^x$ and $x^{-1}y^{-1}xy$ by $[x,y]$.
Given $k\in \NN$, let 
$$
X=Y\cup\{ t\},
$$

$$
\mathcal R_k=\mathcal S\cup \{ \, [a^{t^i}, b^{t^{j}}]\mid i,j\in \ZZ, \; i\ne j,\;  |i|, |j|\le k,\; a, b \in Y \,\},
$$
and 
$$
G_k=\langle X\mid \mathcal R_k\rangle .
$$
Define 
$$
\mathcal{R}=\bigcup_{k\in \NN} \mathcal{R}_k.
$$
It is well-known (see, for example, \cite[Section 15]{Joh}) that the wreath product $G=K_0\wr \ZZ $ has the presentation
$G=\langle X\mid \mathcal{R} \rangle$ in the above notation. 

We view $K_0$ as the subgroup of $G$ generated by $Y$ and let $K_i= t^{-i}K_0t^i$. In this notation, we have
$$
G=\left(\bigoplus_{i\in \ZZ} K_i\right) \rtimes \ZZ,
$$
where $\ZZ=\langle t\rangle$ acts on $\bigoplus_{i\in \ZZ} K_i$ by shifts.

\begin{lem}\label{Lem:k}
Every word $w\in F(X)$ representing $1$ in $G$ represents $1$ in $G_{\|w\|}$.
\end{lem}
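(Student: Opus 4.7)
The plan is to exploit the semidirect product structure $G = \bigl(\bigoplus_{i\in\mathbb{Z}} K_i\bigr)\rtimes \mathbb{Z}$ together with the fact that a word $w$ of length $n$ can only ``visit'' positions $i\in[-n,n]$ along $\mathbb{Z}$. I would rewrite $w$ in $F(X)$ as a product of factors supported at individual positions, sort these factors by position using the commutator relations available in $\mathcal R_n$, and finally kill each local block by invoking the relations of $K$.

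For the rewriting, write $w = u_1 v_1 u_2 v_2 \cdots u_m v_m u_{m+1}$, where each $u_j = t^{a_j}$ is a (possibly empty) power of $t$ and each $v_j$ is a nonempty word in $Y$. Set $s_j = a_1+\cdots+a_j$ and $N=a_1+\cdots+a_{m+1}$. A routine free-group computation (iterating the insertion of $(u_1\cdots u_j)^{-1}(u_1\cdots u_j)$) yields the identity
$$w \;=\; v_1^{t^{-s_1}}\, v_2^{t^{-s_2}}\cdots v_m^{t^{-s_m}}\cdot t^N$$
in $F(X)$. Since $w$ represents $1$ in $G$, projecting to $\mathbb{Z}$ forces $N=0$, so $t^N$ is the empty word and $w$ equals $c:=v_1^{t^{-s_1}}\cdots v_m^{t^{-s_m}}$ literally in $F(X)$. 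Each factor $v_j^{t^{-s_j}}$ represents an element of $K_{-s_j}$, and $\|w\|=n$ ensures $|s_j|\le n$ for every $j$.

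Next, for $a,b\in Y$, $i\ne j$, and $|i|,|j|\le n$, the commutators $[a^{t^i},b^{t^j}]$ all lie in $\mathcal R_n$; a routine induction on word length then shows that for such $i,j$ the images of the subgroups $K_i$ and $K_j$ commute in $G_n$. Applying this to the factors of $c$ and permuting them inside $G_n$, I would collect all terms at each position to obtain
$$c \;=\; \prod_{i\in I} c^{(i)} \quad\text{in } G_n,$$
where $I=\{-s_j:1\le j\le m\}\subseteq[-n,n]$, each $c^{(i)}=t^{-i}\tilde c^{(i)}t^i$ in $F(X)$, and $\tilde c^{(i)}$ is the concatenation (in order of appearance) of those $v_j$ with $-s_j=i$. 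Since $c=1$ in $G$ and the $K_i$'s sit as a direct sum, each $\tilde c^{(i)}$ represents $1$ in $K$; by $K=\langle Y\mid S\rangle$ and $S\subseteq\mathcal R_n$, we obtain $\tilde c^{(i)}\in\ll S\rr\subseteq\ll\mathcal R_n\rr$, whence $c^{(i)}=t^{-i}\tilde c^{(i)}t^i\in\ll\mathcal R_n\rr$ too. Combining these gives $w=c=1$ in $G_n$.

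The one step requiring genuine care is the sorting: $\mathcal R_n$ only equates single-generator commutators $[a^{t^i},b^{t^j}]$ to $1$, whereas the argument needs commutation of arbitrary words representing elements of $K_i$ and $K_j$. The induction on word length is standard, but throughout the rearrangement one must check that all indices appearing on the generators $a^{t^i}$ remain within $[-n,n]$; this is precisely the place where the length bound $\|w\|\le n$ is used.
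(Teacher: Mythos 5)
Your proof is correct and follows essentially the same route as the paper's: decompose $w$ in $F(X)$ into conjugates $v_j^{t^{-s_j}}$ with $|s_j|\le\|w\|$, sort them using the commutators available in $\mathcal R_{\|w\|}$, and kill each local block using $\mathcal S$ together with the direct-sum structure of $\bigoplus_i K_i$. The only difference is organizational: the paper interleaves sorting and cancellation in a single induction on the number of $Y$-letters (locating a matching index via the projection to $K_{\sigma_1}$), whereas you sort completely first and then apply the direct-sum argument once per position.
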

\begin{proof}
Let $w\in F(X)$ be a word of length $k$ representing $1$ in $G$. Using relations from the set $\mathcal S$, we can rewrite $w$ as
\begin{equation}\label{w'}
w^\prime \equiv t^{\tau_0}a_1 t^{\tau_1}\ldots a_qt^{\tau_{q}},
\end{equation}
where $q$ is a non-negative integer (we include the possibility $q=0$ to encompass the case $w^\prime =1$ in $F(X)$), $a_1, \ldots, a_q \in Y$, and $\tau_i$ are integers satisfying $\sum_{i=1}^q|\tau_i|\le k$ for all $i=1, \ldots, q$. Let $\e \colon F(X)\to \ZZ=\langle t\rangle $ be the natural homomorphism with the kernel $\ll Y\rr$. Clearly, we have $\e(w^\prime)=\tau_0+\ldots +\tau_1 =0$. Therefore,
\begin{equation} \label{w''}
w^{\prime}=a_{1}^{t^{\sigma_1}}\ldots a_{q}^{t^{\sigma_q}}
\end{equation}
in the free group $F(X)$, where
$$
\sigma_1= -\tau_0=\tau_1+\tau_2+\ldots + \tau_q, \;\;\; \sigma_2= \tau_2+\ldots + \tau_q,\;\;\; \ldots\;\;\; \sigma_q=\tau_q.
$$
In particular, $|\sigma_i-\sigma_j| \le k$ for all $i$, $j$, and hence, $a_i^{t^{\sigma_i}}$ and $a_j^{t^{\sigma_j}}$ commute in $G_{\| w\|}$. Therefore, we can rewrite $w^\prime $ as $w^\prime= b_{1}^{t^{\rho_1}}\ldots b_{r}^{t^{\rho_r}}$ in $G_{\| w\|}$, where all elements $b_i$ belong to $Y$ and all exponents $\rho_i$ are pairwise distinct. Since this product represents $1$ in $G$, we obtain that $b_i=1$ for all $i$, i.e., $w^\prime =1 $ in $G_{\| w\|}$.
\end{proof}

From its presentation, we see that $G_k$ is an HNN-extension over a finite group (namely, the vertex group $K_{2 k+1}$ and the edge group $K_{2 k}$); in particular, it is virtually free and hence hyperbolic. This already implies the well-known fact that $G$ is approximated by hyperbolic groups (see \cite{Osi02}). The following lemma provides a quantitative estimate.

\begin{lem}\label{Lem:14k}
For every $k\in \NN$, the Cayley graph $\Cay(G_{k}, X)$ is a $14k$-hyperbolic metric space.
\end{lem}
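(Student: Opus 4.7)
The plan is to apply Proposition~\ref{Prop:Tree} to the Bass--Serre tree coming from an HNN decomposition of $G_k$. Let $H = K_{-k}\times K_{-k+1}\times\cdots\times K_{k}$, $A = K_{-k+1}\times\cdots\times K_{k}$, $B = K_{-k}\times\cdots\times K_{k-1}$, and let $\phi\colon A\to B$ be the isomorphism shifting indices by $-1$ (sending each factor $K_i$ to $K_{i-1}$). The first step is to identify
\[
G_k \;\cong\; \la H,\, t \,\mid\, t\,a\,t^{-1} = \phi(a)\ \text{for all }a\in A\ra.
\]
The defining commutation relations of $G_k$ say exactly that the subgroups $K_{-k},\ldots,K_{k}$ commute pairwise, so $H$ is a direct product isomorphic to $K^{2k+1}$; and in $G_k$ the identity $t K_i t^{-1} = K_{i-1}$ realizes $\phi$ by conjugation. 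A presentation comparison (conjugating the relations inside $H$ by $t^{\pm k}$ recovers the commutation relations $[a^{t^i},b^{t^j}] = 1$ for $|i|,|j|\le k$ and vice versa) shows the two presentations are equivalent. By Britton's lemma, $H$ then embeds in $G_k$ as described.

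Once this identification is in place, Bass--Serre theory gives an action of $G_k$ on the Bass--Serre tree $T$ with vertex stabilizers conjugate to the finite group $H$. Let $v_0\in V(T)$ be the base vertex, so that $\mathrm{Stab}_{G_k}(v_0) = H$. Condition (\ref{Eq:dT}) of Proposition~\ref{Prop:Tree} is then immediate: every generator $y\in Y\cup Y^{-1}$ lies in $K_0\subseteq H$ and fixes $v_0$, while $t^{\pm 1}$ translates $v_0$ along exactly one edge of $T$ by construction of the Bass--Serre tree of an HNN extension.

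Finally, I would bound $Rad_X(H)$. Any $g\in H$ decomposes uniquely as $g = k_{-k}k_{-k+1}\cdots k_k$ with $k_i\in K_i$, and writing $k_i = t^{-i} y_i\, t^{i}$ for some $y_i\in Y\cup\{1\}$ and telescoping the product yields
\[
g \;=\; t^{k}\,y_{-k}\,t^{-1}\,y_{-k+1}\,t^{-1}\,\cdots\,t^{-1}\,y_{k-1}\,t^{-1}\,y_k\,t^{k}.
\]
The total length of this word is at most $k + (2k+1) + 2k + k = 6k+1$, so $Rad_X(H)\le 6k+1$ and Proposition~\ref{Prop:Tree} gives hyperbolicity constant at most $2(6k+1) = 12k+2\le 14k$ for $k\ge 1$. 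The main technical obstacle is justifying the HNN extension structure of $G_k$; the remaining steps are routine Bass--Serre bookkeeping together with the short telescoping computation above.
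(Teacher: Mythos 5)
Your proof is correct and takes essentially the same route as the paper's: the identical HNN decomposition of $G_k$ over the vertex group $K_{-k}\oplus\cdots\oplus K_{k}$, the same application of Proposition~\ref{Prop:Tree} to the Bass--Serre tree, and the same telescoping computation giving $Rad_X\le 6k+1\le 7k$ and hence the $14k$ bound. The only difference is that you spell out the presentation comparison and the verification of (\ref{Eq:dT}), which the paper dismisses as ``easy to see.''
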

\begin{proof}
Let
$$
B_1=K_{-k} \oplus  \cdots \oplus K_{k-1} \; \;\; \text{and} \;\; \; B_2=K_{-k+1} \oplus \cdots \oplus K_{k}.
$$
Note that $G_k$ is an HNN-extension of $A_k= K_{-k} \oplus  \cdots \oplus K_{k}$ with the associated subgroups $B_1$ and $B_2$ corresponding to the natural ``shift automorphism" $B_1\to B_2$. Let $T$ be the corresponding Bass-Serre tree of $G_k$. Recall that the vertex set of $T$ is $\{ gA_k\mid g\in G\}$ and, for every $g\in G$, the vertices $gA_k$ and $gt^{\pm1}A_k$ are connected by an edge. Let $v=1A_k$. Then the action of $G$ on $T$ satisfies the assumptions of Proposition \ref{Prop:Tree}.

Let us estimate the radius of $\Stab_G(v)=A_k$ with respect to $X$. Every $a\in A_k$ decomposes as $a=a_{-k}^{t^{-k}} \cdots a_{k}^{t^{k}}$ for some $a_{-k}, \ldots, a_k\in Y$.  We have
$$
|a|_X= |t^ka_{-k}t^{-1}a_{-k+1} \cdots  t^{-1}a_{k}t^{k}|_X\le 6k+1.
$$
Thus, $\Rad_X(A_k)\le 6k+1\le 7k$. By Proposition \ref{Prop:Tree}, $\Cay(G_k, X)$ is $14k$-hyperbolic.
\end{proof}

We are now ready to prove the main result of this section.

\begin{thm}\label{Thm:WP}
For any finite group $K$, the wreath product $K\wr \ZZ$ is well-approximated by hyperbolic groups. In particular, $f_{K\wr \ZZ} \sim n/m$.
\end{thm}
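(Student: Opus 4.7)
The plan is to verify directly that $K\wr\ZZ$, equipped with the infinite presentation $\langle X\mid \bigcup_{i=1}^\infty \mathcal R_i\rangle$ set up above, satisfies conditions (a) and (b) of Definition \ref{Def:wah}, after which part (b) of Proposition \ref{Prop:LimHyp} immediately delivers $f_{K\wr\ZZ}\sim n/m$.

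First I would note the elementary fact that $\mathcal R=\bigcup_{i=1}^\infty \mathcal R_i$ and $\mathcal R_1\subseteq \mathcal R_2\subseteq\cdots$, so $G=K\wr\ZZ$ is the direct limit of the groups $G_i=\langle X\mid \mathcal R_i\rangle$ along the natural surjections $G_i\twoheadrightarrow G_{i+1}$. In particular, $\ll \mathcal R_j\rr\subseteq \ll \mathcal R_i\rr$ in $F(X)$ whenever $j\le i$. With this in place, condition (b) of Definition \ref{Def:wah} holds with $C=14$: Lemma \ref{Lem:14k} asserts that $Cay(G_i,X)$ is $14i$-hyperbolic for every $i\in\NN$, and since $X$ is finite, each $G_i$ is in particular a hyperbolic group.

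For condition (a), take any $w\in F(X)$ with $\|w\|\le i$ representing $1$ in $G$. Lemma \ref{Lem:k} gives $w\in \ll \mathcal R_{\|w\|}\rr$, and from $\|w\|\le i$ we conclude $w\in \ll \mathcal R_i\rr$, as required. Thus both parts of Definition \ref{Def:wah} are verified with constant $C=14$, so $K\wr\ZZ$ is well-approximated by hyperbolic groups, and Proposition \ref{Prop:LimHyp}(b) then yields $f_{K\wr\ZZ}(k,m,n)\sim n/m$.

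There is no substantive remaining obstacle: the two technical ingredients, namely the bound on hyperbolicity constants (Lemma \ref{Lem:14k}, obtained via the HNN splitting of $G_k$ over the finite group $A_k=K_{-k}\oplus\cdots\oplus K_k$ and the radius estimate of Proposition \ref{Prop:Tree}) and the length-sensitive trivialization (Lemma \ref{Lem:k}, proved by induction on the number of letters of $F(X)\setminus\{t^{\pm 1}\}$ in a word), were tailored precisely to produce conditions (a) and (b) of Definition \ref{Def:wah} with a \emph{linear} dependence on $i$. The final paragraph of the proof is therefore a one-line reference to Proposition \ref{Prop:LimHyp}(b).
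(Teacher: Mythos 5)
Your proposal is correct and follows the paper's own argument exactly: condition (a) of Definition \ref{Def:wah} comes from Lemma \ref{Lem:k}, condition (b) with $C=14$ from Lemma \ref{Lem:14k}, and the conclusion from Proposition \ref{Prop:LimHyp}(b). There is nothing to add.
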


\begin{proof}
The presentation of $G=K\wr \ZZ $ described above satisfies conditions (a) and (b) of Definition \ref{Def:wah} by Lemmas \ref{Lem:k} and \ref{Lem:14k}, respectively. Thus, the theorem follows from Proposition \ref{Prop:LimHyp}.
\end{proof}


\subsection{Geodesics and detours in Cayley graphs}


In this section, we establish a property of groups with linear Dehn spectrum that is reminiscent of the exponential divergence of geodesic bi-infinite lines in hyperbolic spaces. It provides a useful tool for proving the non-linearity of the Dehn spectrum and will play a significant role in our treatment of groups of finite exponent in Section \ref{FEQI}.

We begin by introducing the necessary terminology. Throughout this section, $G$ denotes a group generated by a finite set $X$.

\begin{defn}\label{det}
Let $p$ be a geodesic path in $\Cay(G,X)$. We say that a path $q$ is an $r$-\emph{detour} of a vertex $o\in p$ if $q_-=p_-$, $q_+=p_+$, and $q$ does not intersect the $r$-neighborhood of $o$ (see Fig. \ref{Fig:det}).
\end{defn}

\begin{figure}
  \begin{center}
\begingroup%
  \makeatletter%
  \providecommand\color[2][]{%
    \errmessage{(Inkscape) Color is used for the text in Inkscape, but the package 'color.sty' is not loaded}%
    \renewcommand\color[2][]{}%
  }%
  \providecommand\transparent[1]{%
    \errmessage{(Inkscape) Transparency is used (non-zero) for the text in Inkscape, but the package 'transparent.sty' is not loaded}%
    \renewcommand\transparent[1]{}%
  }%
  \providecommand\rotatebox[2]{#2}%
  \newcommand*\fsize{\dimexpr\f@size pt\relax}%
  \newcommand*\lineheight[1]{\fontsize{\fsize}{#1\fsize}\selectfont}%
  \ifx\svgwidth\undefined%
    \setlength{\unitlength}{273.48658633bp}%
    \ifx\svgscale\undefined%
      \relax%
    \else%
      \setlength{\unitlength}{\unitlength * \real{\svgscale}}%
    \fi%
  \else%
    \setlength{\unitlength}{\svgwidth}%
  \fi%
  \global\let\svgwidth\undefined%
  \global\let\svgscale\undefined%
  \makeatother%
  \begin{picture}(1,0.31496822)%
    \lineheight{1}%
    \setlength\tabcolsep{0pt}%
    \put(0,0){\includegraphics[width=\unitlength,page=1]{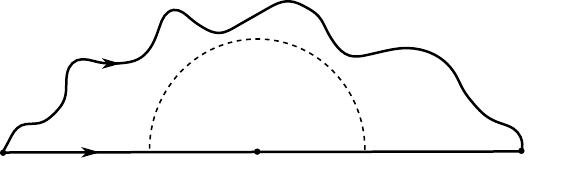}}%
    \put(0.18007895,0.2293365){\color[rgb]{0,0,0}\makebox(0,0)[lt]{\lineheight{1.25}\smash{\begin{tabular}[t]{l}$q$\end{tabular}}}}%
    \put(0,0){\includegraphics[width=\unitlength,page=2]{fig5.pdf}}%
    \put(0.49041473,0.11152403){\color[rgb]{0,0,0}\makebox(0,0)[lt]{\lineheight{1.25}\smash{\begin{tabular}[t]{l}$r$\end{tabular}}}}%
    \put(0.14018333,0.00588466){\color[rgb]{0,0,0}\makebox(0,0)[lt]{\lineheight{1.25}\smash{\begin{tabular}[t]{l}$p$\end{tabular}}}}%
    \put(-0.00210145,0.00881736){\color[rgb]{0,0,0}\makebox(0,0)[lt]{\lineheight{1.25}\smash{\begin{tabular}[t]{l}$p_-$\end{tabular}}}}%
    \put(0.90401723,0.0119898){\color[rgb]{0,0,0}\makebox(0,0)[lt]{\lineheight{1.25}\smash{\begin{tabular}[t]{l}$p_+$\end{tabular}}}}%
    \put(0.44212203,0.00694865){\color[rgb]{0,0,0}\makebox(0,0)[lt]{\lineheight{1.25}\smash{\begin{tabular}[t]{l}$o$\end{tabular}}}}%
  \end{picture}%
\endgroup%

  \end{center}
  \vspace{-3mm}
  \caption{An $r$-detour.}\label{Fig:det}
\end{figure}

\begin{prop}\label{Prop:div}
Let $G$ be a group with linear Dehn spectrum generated by a finite set $X$. There exists a constant $D>0$ such that the following holds. Let $p$ be a geodesic path in $\Cay(G,X)$ and let $q$ be an $r$-detour of a vertex of $p$ for some $r\ge 0$. If the loop $pq^{-1}$ is $k$-contractible (in a sence of the Definition \ref{Def:k-fil}) for some $k\in \NN$, then $\ell (q)\ge D(r/k)^2$.
\end{prop}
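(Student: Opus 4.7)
The plan is to sandwich the number of faces in a minimum $m$-filling of $c=pq^{-1}$ between an upper bound from linearity of $f_G$ and a quadratic lower bound forced by the detour. First, Definition \ref{Def:ord} applied to $f_G\sim n/m$ yields a constant $C$ such that, setting $m:=Ck$, every $k$-contractible loop $c$ of length $n$ satisfies $Area_m(c)\le C n/m+C$. I would fix a minimum-area combinatorial $m$-filling $\phi\colon D\to Cay(G,X)$ of $c=pq^{-1}$; since $p$ is geodesic and $q$ shares its endpoints, $\ell(p)\le\ell(q)$, and hence $Area(\phi)\le 2C\ell(q)/m+C$. Following the remark after Definition \ref{Def:k-fil}, I fill in interior $2$-cells of $D$ and view $D$ as a finite simply connected planar complex with $\tilde o\in\partial D$ mapped to $o$.

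The heart of the argument is to prove $Area(\phi)\gtrsim (r/m)^2$. Introduce the $1$-Lipschitz function $\rho(v):=d_{Cay}(\phi(v),o)$ on $V(D)$; then $\rho(\tilde o)=0$ while $\rho>r$ on the $q$-subarc of $\partial D$. In particular $p_\pm$ lie outside $B(o,r)$, so the $p$-subarc contains the vertex $\tilde o_i$ at $p$-distance $i$ from $\tilde o$ for every $|i|\le r$, with $\rho(\tilde o_i)=|i|$. For each $1\le j\le J:=\lfloor r/m\rfloor-1$, let $\tilde D_j$ be the connected component containing $\tilde o$ of the subcomplex of $D$ generated by the vertices with $\rho\le jm$ together with all edges and faces whose vertices all satisfy $\rho\le jm$. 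The arc $\alpha_j\subseteq\partial D$ of length $2jm$ running from $\tilde o_{-jm}$ through $\tilde o$ to $\tilde o_{jm}$ lies in $\tilde D_j$, and the remainder of the outer boundary of $\tilde D_j$ in $D$ is an interior path $\beta_j$ joining $\tilde o_{jm}$ to $\tilde o_{-jm}$ through vertices with $\rho\le jm$. Since any path in $D^{(1)}$ has length at least the $Cay$-distance between the images of its endpoints, $\ell(\beta_j)\ge 2jm$.

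Any face of $D$ sharing an edge with $\beta_j$ has one side inside $\tilde D_j$ and one outside; the outside face has some vertex with $\rho>jm$, but perimeter $\le m$ forces diameter $\le m/2$ in $Cay$, so all its vertices satisfy $\rho\le(j+1)m$, placing it in $\tilde D_{j+1}\setminus\tilde D_j$. Each face has at most $m$ edges on $\beta_j$, so $|\tilde D_{j+1}\setminus\tilde D_j|\ge\ell(\beta_j)/m\ge 2j$, whence
\[
Area(\phi)\ge\sum_{j=1}^{J}2j=J(J+1)\ge r^2/(4m^2)
\]
once $r\ge 4m$. Comparing with the upper bound yields $\ell(q)\gtrsim r^2/m=r^2/(Ck)\ge r^2/(Ck^2)$, so $\ell(q)\ge D(r/k)^2$ after adjusting the constant $D$; the case $r<4Ck$ is absorbed by further shrinking $D$, since then $(r/k)^2$ is bounded. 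The main obstacle will be topological bookkeeping: because the filled $D$ is only simply connected and need not be a disk, one must carefully verify that the outer boundary of $\tilde D_j$ really decomposes as $\alpha_j\cup\beta_j$ with the asserted length bounds, handling possible pinch points and internal holes of $\tilde D_j$.
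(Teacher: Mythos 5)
Your route is genuinely different from the paper's. The paper does not prove the quadratic area lower bound itself: it converts the filling into a van Kampen diagram over $\langle X\mid \mathcal S_{Ck}\rangle$, views $\partial\Delta$ as a geodesic polygon whose sides are $p$ and the individual edges of $q^{-1}$, and quotes Olshanskii's lemma (Lemma \ref{Lem:Ols}), which gives at least $4r^2/M^3$ faces when some side has a vertex $r$-far from the union of the other sides; combined with Lemma \ref{Lem:Lin} this yields $\ell(q)\ge D(r/k)^2$ in three lines. You instead prove a quantitatively stronger lower bound, of order $(r/m)^2$ rather than $r^2/m^3$, from scratch by slicing a minimal filling into bands of the Cayley-distance function $\rho(\cdot)=d(\phi(\cdot),o)$; this is self-contained and exploits that $p$ is geodesic in $Cay(G,X)$ (so $\rho$ is exactly $|i|$ along $p$), whereas Olshanskii's lemma works with the intrinsic metric of the diagram. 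The final arithmetic in your plan is fine (including absorbing the small-$r$ case into $D$), and two small points are easily repaired: face ``perimeters'' and lengths along $\beta_j$ must be measured as image lengths, since edges of $D$ may be collapsed to vertices and a face can have arbitrarily many zero-length edges; and the upper bound should carry the $\lceil\cdot\rceil$ or an additive constant, as in Lemma \ref{Lem:Lin}.

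The one step that is not just bookkeeping is the claim that the outer boundary of $\tilde D_j$ is $\alpha_j$ together with a single interior path $\beta_j$ from $\tilde o_{jm}$ to $\tilde o_{-jm}$. As stated this is false in general: faces outside $\tilde D_j$ may attach along edges of $\alpha_j$ itself, the frontier may consist of several arcs meeting $\alpha_j$ in its interior, and $\tilde D_j$ may be pinched at cut vertices, so neither the ``single path'' structure nor the endpoints $\tilde o_{\pm jm}$ are automatic, and with them goes your length bound $\ell(\beta_j)\ge 2jm$. The repair is available but requires an argument: let $W_q$ be the component of $D\setminus \tilde D_j$ containing the $q$-arc and pass to the connected, simply connected subdiagram $\Delta'_j=D\setminus W_q$; its exterior boundary walk is $\alpha_j$ followed by a walk $\beta_j$ from $\tilde o_{jm}$ to $\tilde o_{-jm}$ whose positive-length edges all lie in $\tilde D_j$ and are each adjacent to a face contained in $W_q$, and the degenerate configurations (hair edges or cut vertices of small $\rho$ separating $\tilde o$ from the $q$-arc) are excluded because any such vertex or edge is visited twice by $\partial D$, hence maps into $q$ or into two distinct points of the simple geodesic $p$, which is impossible when $\rho\le jm<r$. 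Each face adjacent to $\beta_j$ straddles the level $jm$ and has $\rho$-oscillation at most $m/2$, so these face families are disjoint in $j$ and each face absorbs at most $2m$ of the image length of $\beta_j$; this gives at least about $j$ faces per band and recovers your quadratic bound. With this substitution your proof is correct; the paper simply sidesteps the whole separation analysis by citing Olshanskii, at the cost of the weaker (but sufficient) exponent in $m$.
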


We will need two lemmas. The proof of the first one is a straightforward exercise, and we leave it to the reader.

\begin{lem}\label{Lem:Lin}
Let $f\in \mathcal F$. Suppose that $f\sim n/m$. Then there exists a constant $C$ such that for every $(k,m,n)\in \mathbb T$, we have $f(k,m,n)\le C\lceil n/m\rceil$ for every $m\ge Ck$, where $\mathbb T=\{ (k,m,n)\in \mathbb N\times \mathbb N\times\mathbb N\mid m\ge k \}$ as in the Definition \ref{Def:IPG}.
\end{lem}

The second lemma was proved in \cite[Lemma 5]{Ols}.

\begin{lem}[Olshanskii]\label{Lem:Ols}
Let $\Delta $ be a van Kampen diagram over a group presentation, where every face has perimeter at most $M$. Suppose that $\partial \Delta =p_1p_2\ldots p_n$, where each $p_i$ is a geodesic path in $\Delta^{(1)}$. If there exists $i$ and a vertex $o$ on $p_i$ such that $o$ does not belong to the closed $r$-neighborhood of $\bigcup_{j\ne i} p_j$, then the number of faces in $\Delta$ is at least $4r^2/M^3$.
\end{lem}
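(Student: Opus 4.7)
The plan is a planar isoperimetric counting argument on $\Delta$ itself, structured around a nested family of sub-diagrams centred at $o$. The geodesic hypothesis on $p_i$ is used to force the "inner" perimeter of each sub-diagram to grow linearly with its radius, and summing these perimeters gives the claimed quadratic lower bound on the area.

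First I would fix the segment of $p_i$ around $o$: since $B(o,r)$ in $\Delta^{(1)}$ avoids $\bigcup_{j\neq i}p_j$, both endpoints of $p_i$ are at distance $>r$ from $o$, so the $2r$-subpath $\tilde p\subseteq p_i$ centred at $o$ lies inside $B(o,r)$; because $p_i$ is a geodesic of $\Delta^{(1)}$, every vertex $v$ of $\tilde p$ satisfies $d_\Delta(v,o)=|\text{position on }\tilde p|$. Next, for each integer $s$ with $0\le s\le r-M$, define $E_s$ to be the subdiagram obtained by taking the union of all closed faces $F$ of $\Delta$ whose boundary meets $B(o,s)$. Since each face has perimeter at most $M$, one verifies that $E_s\subseteq\overline{B(o,s+M)}$, that $E_s$ is simply connected, and that $E_s$ is disjoint from $\bigcup_{j\neq i}p_j$. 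Consequently $\partial E_s$ decomposes as a concatenation $q_s\alpha_s$, where $q_s$ is a subpath of $p_i$ containing $o$ and $\alpha_s$ is an interior arc of $\Delta$ connecting the endpoints of $q_s$. By construction $q_s$ contains the sub-segment of $\tilde p$ of length at least $2s$ around $o$; the geodesic property of $p_i$ then gives $d_\Delta((\alpha_s)_-,(\alpha_s)_+)=\ell(q_s)\ge 2s$, so $\ell(\alpha_s)\ge 2s$.

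For the face count I would slice the scale $[0,r]$ into sub-intervals of length $M$ and consider the annular sub-diagrams $A_t:=E_{tM}\setminus E_{(t-1)M}$ for $t=1,\dots,T=\lfloor r/(2M)\rfloor$. Every edge appearing on the inner boundary arc $\alpha_{(t-1)M}$ is contained in (at most two) faces belonging to the outer layer of $E_{(t-1)M}$, and each face, having perimeter $\le M$, contributes at most $M$ edges to that arc. After converting the vertex-metric radii $(t-1)M$ to face-layer counts, this yields a bound of the form $|A_t|\ge c\,t/M$ for an absolute constant $c$. Summing over $t$ gives $|\Delta|\ge|E_{TM}|\ge\sum_{t=1}^T|A_t|\ge c' T^2/M$, and substituting $T\sim r/M$ produces the desired $4r^2/M^3$. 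The main obstacle is the bookkeeping in the last step: one must verify that $E_s$ really is simply connected (so that $\partial E_s$ is a single loop and the decomposition $q_s\alpha_s$ is valid), and that edges/faces are not double-counted when passing from the length estimate on $\alpha_s$ to the face count of $A_t$; the conservative multiplicities in this bookkeeping are what produce the factor $M^3$ rather than the naive $M$ or $M^2$.
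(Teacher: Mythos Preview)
The paper does not prove this lemma; it is cited from \cite[Lemma~5]{Ols} and used as a black box in the proof of Proposition~\ref{Prop:div}. Your outline is essentially Olshanskii's original argument: build a nested family of subdiagrams around $o$, use the geodesic hypothesis on $p_i$ to force the interior boundary arc $\alpha_s$ to have length at least $2s$, and convert this into a face count by stepping the radius in increments of $M$ so that the faces abutting successive boundary arcs are disjoint.

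The two caveats you flag are the right ones and both are routine. Simple connectedness of $E_s$ can be arranged by replacing $E_s$ with the subdiagram it bounds in the plane; since every face of $E_s$ lies in $\overline{B(o,s+M/2)}$, any hole of $E_s$ does as well, so filling holes creates no new intersection with $\bigcup_{j\ne i}p_j$ (for $s+M/2\le r$), and the segment $q_s\subseteq p_i$ on $\partial\Delta$ is unaffected. For the layer count, each edge of $\alpha_{(t-1)M}$ borders exactly one face $F\notin E_{(t-1)M}$; this $F$ has a vertex on $\alpha_{(t-1)M}\subseteq\overline{B(o,(t-1)M+M/2)}$, hence $F\in E_{tM}\setminus E_{(t-1)M}$, and any single $F$ contributes at most $M$ edges to $\alpha_{(t-1)M}$. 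This already yields $|A_t|\ge \ell(\alpha_{(t-1)M})/M\ge 2(t-1)$, slightly better than the $ct/M$ you wrote. Summing for $t\le\lfloor r/M\rfloor$ gives on the order of $(r/M)^2$ faces, so the stated $4r^2/M^3$ is not sharp; the extra power of $M$ is slack in Olshanskii's formulation rather than a symptom of a missing step in your argument.
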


The main result of this section follows from Olshanskii's lemma.

\begin{proof}[Proof of Proposition \ref{Prop:div}]
Let $C$ be the constant provided by Lemma \ref{Lem:Lin} and let $p$, $q$ be as in the proposition. By the definition of $f_{G,X}$, there exists a van Kampen diagram $\Delta$ over $\langle X\mid \mathcal{S}_{Ck}\rangle $ with the boundary label $\Lab(pq^{-1})$ and at most $f_{G,X}(k, Ck, \ell(pq^{-1}))$ faces. By abuse of notation, we think of $p$ and $q$ as subpaths of $\partial \Delta$.

Clearly $p$ is a geodesic path in $\Delta^{(1)}$. We think of $\partial \Delta$ as a geodesic polygon whose sides are $p$ and individual edges of $q^{-1}$. Since $q$ is an $r$-detour of a vertex of $p$, Lemmas \ref{Lem:Lin} and \ref{Lem:Ols} imply that
$$
4r^2\le(Ck)^3 f_{G,X}(k, Ck, \ell(pq^{-1}))\le C^4k^3 \left\lceil \frac{\ell(pq^{-1})}{Ck}\right\rceil\le C^4k^3 \left\lceil \frac{2\ell(q)}{Ck}\right\rceil,
$$
which yields the desired inequality for an appropriate constant $D$.
\end{proof}

We illustrate Proposition \ref{Prop:div} by proving the following. 

\begin{cor}
Let $G$ be a finitely generated group with a linear Dehn spectrum. Then $G$ cannot be decomposed into the direct product of two infinite finitely generated groups.
\end{cor}

\begin{proof}
 For the sake of contradiction, assume that $G= A \times B$, where $A$ and $B$ are infinite finitely generated groups. Let $A = \langle Y \rangle$ and $B= \langle Z \rangle$, where $Y$ and $Z$ are finite, let $X=Y \sqcup Z$. Let $D$ be a constant from the statement of Proposition \ref{Prop:div} for $\Cay(G, X)$. We choose any $r > 96/D$ and consider a geodesic path $p$ in $\Cay(G,X)$ of length $2r$, where $\Lab(p)=a$, and the word $a$ contains only letters from  $Y$. Let $q$ be an $r$-detour of a middle vertex of $p$, such that $\Lab(q)=bab^{-1}$, where $b$ is a word of length $2r$ containing only letters from $Z$. Clearly, the loop $pq^{-1}$ in $\Cay(G,X)$ is $4$-contractible. Therefore, we have $\ell (q)\ge D(r/4)^2$ by Proposition \ref{Prop:div}. This means that $6r \ge D(r/4)^2$, which contradicts the inequality $r > 96/D$. 
\end{proof}

\begin{rem}
A similar idea can be used to show that a finitely generated group with linear Dehn spectrum cannot contain an undistorted central element of infinite order.
\end{rem}


\section{A descriptive approach to isoperimetric functions}\label{Sec:DA}



\subsection{$F_\sigma$ relations on Polish spaces}


We begin by briefly reviewing the necessary background from descriptive set theory.  A\emph{ $G_\delta$-subset} (respectively, an \emph{$F_\sigma$-subset}) of a topological space is a subset that can be represented as a countable intersection of open sets (respectively, a countable union of closed sets).  A subset of a topological space is \emph{meager} if it is a union of countably many nowhere dense sets. A \emph{comeager} set is a set whose complement is meager. Equivalently, a subset of a topological space is comeager if it is a countable intersection of sets with dense interior.

A topological space is a \emph{Polish space} if it is separable and completely metrizable. The Baire category theorem (see~\cite[Theorem~8.4]{Kec}) states that in a nonempty Polish space, every comeager subset is dense, and in particular is non-empty.

A topological space is \emph{perfect} if it has no isolated points. We will need a result of Mycielski stating that for every meager relation on a non-empty perfect Polish space, there exist $2^{\aleph_0}$ unrelated elements. For our purpose, it is more convenient to formulate this result in terms of comeager sets.

\begin{thm}[Mycielski, { \cite[Theorem 19.1]{Kec}}]\label{Thm:Myc}
Let $X$ be a non-empty perfect Polish space. For any comeager subset $R\subseteq X\times X$, there exists a subset $C\subseteq X$ of cardinality $2^{\aleph_0}$ such that $\{(x,y)\in C\times C \mid x\ne y\}\subseteq R$.
\end{thm}

As usual, by a \emph{quasi-order} we mean any reflexive and transitive (but not necessarily antisymmetric) relation. Every quasi-order $Q$ on a set $X$ induces an equivalence relation $E_Q$ by the rule
$$
xE_Qy \;\; \Longleftrightarrow  \;\; xQy \;\,{\rm and }\;\, yQx.
$$
A quasi-order $Q$ on $X$ is said to be \emph{$F_\sigma$} if it is an $F_\sigma$ subset of $X\times X$. A subset $A\subseteq X$ is a \emph{$Q$-antichain} if distinct elements of $A$ are $Q$-incomparable, i.e., we have neither $xQy$ nor $yQx$ for any distinct $x,y\in A$. We will need the following corollary of Mycielski's theorem.

\begin{cor}\label{Cor:2dec}
Let $Q$ be an $F_\sigma$ quasi-order on a Polish space $X$. Suppose that $X$ contains two distinct dense $E_Q$-equivalence classes. Then every comeager subset $S\subseteq X$ contains a $Q$-antichain of cardinality $2^{\aleph_0}$.
\end{cor}

\begin{proof}
We first note that $X$ is perfect and nonempty since it contains two distinct and disjoint dense subsets. By the Baire category theorem, every comeager subset of a Polish space contains a dense $G_\delta$ subset. Passing to such a subset if necessary, we can assume that $S$ is dense $G_\delta$. Being a $G_\delta $-subset, $S$ is a Polish space itself with respect to the induced topology (see \cite[Theorem 3.11]{Kec}). Further, $S$ has no isolated points since it is a dense subset of a perfect space.

Let $R_1=(X\times X)\smallsetminus Q$. We want to show that $R_1$ is comeager in $X\times X$. Since $Q$ is $F_\sigma$ in $X\times X$, $R_1$ is $G_\delta$. Let $[a]$, $[b]$ be two distinct dense $E_Q$-equivalence classes in $X$. Since $a$ and $b$ are not equivalent, we have $(a,b)\notin Q$ or $(b,a)\notin Q$. Assume $(a,b)\notin Q$ for definiteness. The density of $[a]$ and $[b]$ implies that every non-empty open subset of $X\times X$ contains a pair $(a^\prime, b^\prime)$, where $a^\prime \in [a]$ and $b^\prime\in [b]$. Clearly, $(a^\prime, b^\prime)\notin Q$. Thus, $R_1$ is comeager in $X\times X$.

Similarly, the set $R_2=(X\times X)\smallsetminus \{ (y,x)\mid (x,y)\in Q\}$ is comeager in $X\times X$. Since $S$ is comeager in $X$, we have $S =\bigcap_{n\in \NN} S_n$, where each $S_n\subseteq X$ has a dense interior. Note that $S \times S = \bigcap_{n\in \NN} S_n\times S_n$ and each $S_n \times S_n$ has a dense interior. Hence,  $S\times S$ is also a comeager subset of $X\times X$. It follows that the set $R=R_1\cap R_2\cap (S\times S)$ is comeager in $X\times X$ and, consequently, in $S\times S$. By Mycielski's theorem applied to $R\subseteq S$, there exists a subset $C\subseteq S$ of cardinality $2^{\aleph_0}$ such that $\{(x,y)\in C\times C \mid x\ne y\}\subseteq R$. The latter inclusion implies that $C$ is a $Q$-antichain.
\end{proof}


\subsection{The Dehn spectrum of marked groups}\label{Sec:aaifgg}


We begin by recalling the definition of the space of finitely generated marked groups given in \cite{Gri}. For each $n \in \NN$, let $\mathcal G_n$ denote the set of isomorphism classes of pairs $(G,X)$, where $G$ is a group, $X$ is a generating $n$-tuple of $G$, and two such pairs $(G, (x_1, \ldots, x_n))$ and $(H,(y_1, \ldots, y_n))$ are \emph{isomorphic} if there is an isomorphism $G \rightarrow H$ mapping $x_i$ to $y_i$ for every $i=1, \ldots, n$. Following the standard practice, we keep the notation $(G,X)$ for the equivalence class of $(G,X)$.

\begin{ex}
Let $n=2$ and let $G=\mathbb Z\oplus\mathbb Z$. For any generating tuples $X=(x_1,x_2)$ and $Y=(y_1,y_2)$ of $\mathbb Z\oplus\mathbb Z$, the pairs $(G, X)$ and $(G, Y)$ are isomorphic as the bijection between any two bases of a free abelian group extends to an isomorphism. On the other hand, there are infinitely many non-isomorphic pairs of the form $(G, T)$, where $G=\mathbb Z\oplus\mathbb Z$ and $|T|=3$. For instance, the pairs $(G, \{(1,0), (0,1), (1,n)\})$, $n\in \NN$, represent distinct elements of $\mathcal G_3$.
\end{ex}

The topology on $\G_n$ can be defined in terms of Cayley graphs as follows. Let $G$ be a group generated by an $n$-tuple $X$. Recall that $|g|_X$ denotes the word length of an element $g\in G$ with respect to $X$. For any $r>0$, we define $B_{G,X}(r)=\{ g\in G \mid |g|_X\le r\}.$

\begin{defn}\label{CayTop}
Let $(G,X), (H,Y)\in \G _n$, where $X=(x_1, \ldots, x_n)$ and $Y=(y_1, \ldots, y_n)$. We write $(G,X)\cong_r(H,Y)$ if there is an isomorphism between the induced subgraphs of $\Cay (G,X)$ and $\Cay (H,Y)$ with the vertex sets $B_{G,X}(r)$ and $B_{H,Y}(r)$, respectively, that sends edges labelled by $x_i$ to edges labelled by $y_i$ for all $i=1, \ldots, n$. The topology on $\G_n$ is defined by taking the family of sets $\{ W(G,X,r)\mid (G,X)\in \G_n,\; r\in \NN\}$, where
\begin{equation}\label{nbd}
W(G,X,r) = \{ (H,Y)\in \G_n\mid (G,X)\cong_r(H,Y)\},
\end{equation}
as the basis of neighborhoods.
\end{defn}

It is straightforward to verify that every $\G_n$ is Hausdorff, compact, and metrizable (see \cite{Gri} for details). In particular, $\G_n$ is a Polish space. We record an immediate yet useful observation.

\begin{lem}\label{Ex:quot}
Let $(G,X)\in \mathcal G_n$ and let $\e\colon G\to Q$ be an epimorphism. If the restriction of $\e$ to $B_{G,X}(r)$ is injective, then $(Q, \e(X))\in W(G,X,r)$.
\end{lem}

Recall that $\mathbb T=\{ (k,m,n)\in \mathbb N\times \mathbb N\times\mathbb N\mid m\ge k \}$, and $\mathcal F$ denotes the set of all functions $\mathbb T\to \mathbb R$ that are non-decreasing in the first and third arguments and non-increasing in the second argument. In what follows, we think of $\mathcal F$ as a topological space with the topology of pointwise convergence. For every $n\in \NN$, we consider the map $\iota_n\colon \G_n\to \mathcal F$ given by
$$
\iota_n(G,X)=f_{G,X}.
$$
This map is well-defined since the Cayley graphs of isomorphic pairs $(G,X)$ and $(H,Y)$ are isomorphic as unlabeled graphs.

\begin{lem}
For every $n\in \NN$, the map $\iota_n$ is continuous.
\end{lem}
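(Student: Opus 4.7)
The plan is to establish continuity pointwise. Since $\mathcal F$ carries the topology of pointwise convergence, continuity of $\iota_n$ reduces to continuity, for each fixed triple $(k,m,n)\in \mathbb T$, of the real-valued map $\G_n\to \mathbb R$ sending $(G,X)\mapsto f_{G,X}(k,m,n)$. I will in fact show the stronger statement that this map is locally constant on $\G_n$, which clearly implies continuity.

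The crux of the argument is the observation that, for the fixed parameters $(k,m,n)$, the value $f_{G,X}(k,m,n)$ is completely determined by the single subset $\mathcal S_m\subseteq F_n$, where we identify $X$ with the canonical basis of $F_n$ via $\e_{G,X}$. Indeed, since $k\le m$, the set $\mathcal S_k$ is just $\mathcal S_m$ intersected with the words of length at most $k$; the normal closure $\ll \mathcal S_k\rr\subseteq F_n$ is then an intrinsic object in the free group, independent of the quotient $G$; and for each word $w\in F_n$, the integer $Area_{\mathcal S_m}(w)$ is by definition the minimum length of a decomposition of $w$ as a product of conjugates of elements of $\mathcal S_m$ inside $F_n$. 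Thus the maximum in (\ref{fkmn}) that defines $f_{G,X}(k,m,n)$ is taken over a set of words and evaluated to an integer, both of which are determined purely by $\mathcal S_m$.

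The second ingredient is that $\mathcal S_m$ itself is determined by the labeled $m$-ball in the Cayley graph. A word $w\in F_n$ with $\|w\|\le m$ represents $1$ in $G$ iff the path in $Cay(G,X)$ starting at $1_G$ and labeled by $w$ returns to $1_G$, and such a path is automatically contained in the ball $B_{G,X}(m)$. Hence, if $(G,X)\cong_m(H,Y)$ (where we normalize the labeled graph isomorphism so that $1_G \mapsto 1_H$, as we may by vertex-transitivity of Cayley graphs), then $\mathcal S_m$ agrees in $G$ and $H$ as a subset of $F_n$ restricted to words of length at most $m$, and therefore $f_{G,X}(k,m,n)=f_{H,Y}(k,m,n)$. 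This shows that the map $(G,X)\mapsto f_{G,X}(k,m,n)$ is constant on the basic open neighborhood $U(G,X,m)$, completing the proof. The only subtlety worth flagging is to notice that $\ll \mathcal S_k\rr$ and $Area_{\mathcal S_m}$ live intrinsically in the ambient free group $F_n$ rather than in the quotient $G=F_n/\Ker(\e_{G,X})$, so there is no circularity in using them to characterize the output of $\iota_n$; beyond this, the argument is essentially formal.
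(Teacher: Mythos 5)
Your approach is essentially the paper's: the value $f_{G,X}(k,m,n)$ at a fixed triple is determined by finitely much data about the marked group, so the map $(G,X)\mapsto f_{G,X}(k,m,n)$ is locally constant and $\iota_n$ is continuous for the topology of pointwise convergence. Your core observation is correct and in fact slightly sharper than what the paper records: since $\mathcal S_k=\mathcal S_m\cap\{w:\|w\|\le k\}$, since $\ll\mathcal S_k\rr$ is a normal closure in $F_n$, and since $Area_{\mathcal S_m}(w)$ is computed in $F_n$, the whole quantity depends only on $\mathcal S_m$ as a subset of $F_n$, whereas the paper's one-line proof asks for agreement of balls up to radius $\max(k,m,n)$.

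One step needs repair: the normalization ``$1_G\mapsto 1_H$ by vertex-transitivity'' is not justified as stated. If $\phi$ is an unbased label-preserving isomorphism between the induced subgraphs on $B_{G,X}(m)$ and $B_{H,Y}(m)$, composing with left translation by $\phi(1_G)^{-1}$ does not return a map onto the induced subgraph on $B_{H,Y}(m)$ (translation recenters the ball), so you cannot automatically promote $\phi$ to a basepoint-preserving isomorphism. The easiest fix is to bypass Cayley balls entirely: by definition, the topology on $\G_n$ is the pull-back of the product topology on $\mathcal N(F_n)$ via $(G,X)\mapsto \Ker(\e_{G,X})$, so the set of marked groups whose kernel intersects the finite set of words of length at most $m$ in exactly $\mathcal S_m$ is an open neighborhood of $(G,X)$; your first paragraph then shows that $(G,X)\mapsto f_{G,X}(k,m,n)$ is constant on this neighborhood, which completes the proof without any basepoint issue.
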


\begin{proof}
It follows from the definition of a Dehn spectrum that, for any $r>0$ and any $(G_1,X_1), (G_2, X_2) \in \G_n$ such that $(G_1,X_1)\cong_r (G_2, X_2)$, we have $f_{G_1, X_1}(k,m,n)=f_{G_2, X_2}(k,m,n)$ for all $k,m,n\le r$. This implies the claim of the lemma.
\end{proof}

\begin{prop}\label{Prop:Fs}
For every $n\in \NN$, the following hold.
\begin{enumerate}
\item[(a)] The relation $\preccurlyeq_{\mathrm{Ds}}$ is an $F_\sigma$ quasi-order on $\G_n$.
\item[(b)] For every $g\in \mathcal F$, the set $\{ (G,X)\in \G_n\mid f_{G,X}\preccurlyeq g\}$ is an $F_\sigma$ subset of $\G_n$.
\end{enumerate}
\end{prop}

\begin{proof}
We begin with part (a). By definition, we have $(G,X)\preccurlyeq_{\mathrm{Ds}} (H,Y)$ for some $(G,X), (H,Y)\in \G_n$ if and only if $\iota_n(G,X)\preccurlyeq\iota_n(H,Y)$. Since $\iota_n$ is continuous, it suffices to show that $\preccurlyeq$ is an $F_\sigma$ quasi-order on $\mathcal F$. For two constants $C,D\in \NN$, we denote by $Q_{C,D}$ the set of all pairs $(f,g)\in \mathcal F\times \mathcal F$ such that
\begin{equation}\label{Eq:Fs1}
f(k,Cm,n)\le Cg(Ck, m, Cn) +Cn/m +C
\end{equation}
for all $(k,m,n)\in \mathbb T$ satisfying the inequalities
\begin{equation}\label{Eq:Fs2}
k\le D,\;\;\; Ck\le m\le D,\;\;\; n\le D.
\end{equation}
As a subset of $\mathcal F \times \mathcal F$, the relation $\preccurlyeq$ coincides with $\bigcup\limits_{C\in \NN}\bigcap\limits_{D\in \NN} Q_{C,D}$. Since there are only finitely many $(k,m,n)\in \mathbb T$ satisfying (\ref{Eq:Fs2}), each $Q_{C,D}$ is closed in $\mathcal F\times \mathcal F$ and we obtain (a).

The proof of part (b) is similar. For any two constants $C,D\in \NN$ and the fixed function $g\in \mathcal F$, we denote by $R_{C,D}$ the set of all $f\in \mathcal F$ satisfying (\ref{Eq:Fs1}) for all $(k,m,n)\in \mathbb T$ satisfying (\ref{Eq:Fs2}). In this notation, we have
$\{ (G,X)\in \G_n\mid f_{G,X}\preccurlyeq g\}=  \bigcup\limits_{C\in \NN}\bigcap\limits_{D\in \NN} R_{C,D}$
and the claim follows as in part (a).
\end{proof}

Recall that
$$
{\mathcal H}_n=\{ (G,X)\in \mathcal G_n\mid G \textrm{ is non-elementary hyperbolic} \}
$$
and $\overline{\mathcal H}_n$ denotes the closure of $\mathcal H_n$ in $\mathcal G_n$.
We want to apply Corollary \ref{Cor:2dec} to the closure $\overline{\mathcal H}_n$ of ${\mathcal H}_n$ in $\G_n$ and the quasi-order $\preccurlyeq_{\mathrm{Ds}}$. To this end, we need Lemma \ref{Lem:quad} (see below) proved by employing the Dehn filling technology. We briefly discuss the necessary background.

A group $G$ is \emph{hyperbolic relative to a subgroup} $H$ if it is finitely presented relative to $H$ and satisfies a linear relative isoperimetric inequality. That is, there exist finite subsets $X\subseteq G$, $\mathcal R\subseteq F(X)\ast H$, and a constant $C\in \NN$ such that the natural homomorphism $F(X) * H \rightarrow G$ induces an isomorphism $(F(X) * H) /\langle\langle\mathcal{R}\rangle\rangle \rightarrow G$, and every element $w\in \ll \mathcal R\rr$ can be represented in $F(X)\ast H$ as the product of at most $C|w|_{X\cup H}$ conjugates of elements of $\mathcal R$ or their inverses. It is well-known that every group hyperbolic relative to a hyperbolic subgroup is hyperbolic itself (see, for example, \cite[Corollary 2.41]{Osi06}).

We will need the following result. Related statements appear in the literature (see, for example, \cite[Corollary 3.26]{CIOS}), but the precise formulation required here does not seem to be available, so we include a brief proof for completeness. The notion of a hyperbolically embedded subgroup used in the proof can be treated as a ``black box"; a detailed discussion of this notion may be found in the introduction to \cite{DGO}.

\begin{lem}\label{Lem:Fhe}
Let $G$ be a non-elementary hyperbolic group. For every $k\in \NN$, $G$ contains a subgroup $H$ such that $H$ is isomorphic to a direct product of a finite group $K$ and a free group of rank $k$ and $G$ is hyperbolic relative to $H$.
\end{lem}

\begin{proof}
By \cite[Theorem 2.24]{DGO}, $G$ contains a hyperbolically embedded subgroup $H$ with the prescribed structure. By \cite[Theorem 4.31]{DGO}, $H$ is finitely generated and undistorted in $G$; the latter condition means that there are finite generating sets $A$, $B$ of $G$ and $H$, respectively, and a constant $C$ such that $|h|_B\le C|h|_A$ for all $h\in H$. Further, we have $|H\cap g^{-1}Hg|<\infty $ for all $g\in G\setminus H$ by \cite[Proposition 2.10]{DGO}. These conditions allow us to apply {\cite[Theorem 1.5]{Osi06}} and to conclude that $G$ is hyperbolic relative to  $H$.
\end{proof}

The following result summarizes \cite[Theorem 1.1 and Corollary 1.2]{Osi07}.

\begin{thm}[Osin, \cite{Osi07}]\label{Thm:DF}
Let $G$ be a group hyperbolic relative to a subgroup $H$ and let $\S$ be a finite subset of $G$. There exists a finite set $ \mathcal T \subseteq H\smallsetminus\{ 1\}$ such that for any subgroup $N\lhd H$ satisfying $N\cap \mathcal T=\emptyset$, the following hold.
\begin{enumerate}
\item[(a)] Let $\ll N\rr$ denote the normal closure of $N$ in $G$. Then $\ll N\rr \cap H =N$, and the quotient group $G/\ll N\rr$ is hyperbolic relative to the image of $H$. In particular, if the quotient group $H/N$ is hyperbolic, then so is $G/\ll N\rr$.
\item[(b)] The natural homomorphism $G\to G/\ll N\rr$ is injective on $\S$.
\end{enumerate}
\end{thm}

Let $\mathcal C$ be a class of groups. A group $G$ is said to be \emph{fully residually $\mathcal C$} if, for every finite $\S\subseteq G$, there exists an epimorphism $\e\colon G\to Q$ such that $Q\in \mathcal C$ and the restriction of $\e$ to $\S$ is injective.

We denote by $\mathcal{NEH}$ the set of all non-elementary hyperbolic groups. Note that if $(G,X)\in \G_n$ and $G$ is fully residually $\mathcal{NEH}$, then $(G,X)\in \overline{\mathcal H}_n$, where $n=|X|$. The following result is an immediate corollary of Theorem \ref{Thm:DF}.

\begin{cor}\label{Cor:FRH}
Let $G$ be a group generated by a finite set $X$. Suppose that $G$ is hyperbolic relative to a fully residually $\mathcal{NEH}$ group $H$. Then $G$ is fully residually $\mathcal{NEH}$. In particular, $(G,X)\in \overline{\mathcal H}_n$, where $n=|X|$.
\end{cor}

\begin{lem}\label{Lem:quad}
For every integer $n\ge 2$, the set $\overline{\mathcal H}_n$ contains a dense subset of marked groups with quadratic Dehn function.
\end{lem}

\begin{proof}
Let $U$ be a non-empty open subset in $\overline{\mathcal H}_n$, $V$ an open subset in $\G_n$ such that $U=V\cap \overline{\mathcal H}_n$. Let $(G,X) \in U$ be a marked hyperbolic group. By the definition of the topology in $\G_n$, there exists $r\in \NN$ such that $W(G,X,r)\subseteq V$, where $W(G,X,r)$ is defined by (\ref{nbd}).

For $k\in \NN$, let $F_k $ denote the free group of rank $k$. By Lemma \ref{Lem:Fhe}, for every $k\in \NN$, there exists a subgroup $H\cong F_k\times K$ of $G$ such that $|K|<\infty$ and $G$ is hyperbolic relative to $H$. We apply this result to $k=14$ and denote by $A=\{a_1, \ldots , a_7, b_1, \ldots , b_7\}$ a basis of $F_{k}$. Henceforth, we identify $H$ with $F_k\times K$ and think of $F_k$ as a subgroup of $H$.

Let $\S=B_{G,X}(r)$ and let $\mathcal T\subseteq H\smallsetminus\{ 1\}$ be the set provided by Theorem \ref{Thm:DF}. We choose $m\in \NN$ such that all elements of $\mathcal T\cap F_k$ have length at most $m$ with respect to $A$. Let
$$
u=a_1a_2^m\ldots a_7^m, \;\;\;\; v=b_1b_2^m\ldots b_7^m.
$$
Note that $\langle u,v\rangle $ is a free factor of $F_k$ and the presentation $\langle A\mid u,\, v\rangle $ satisfies the $C^\prime(1/6)$ small cancellation condition. Let $M$ (respectively, $N$) denote the normal closure of the set $\{ u, v\}$ (respectively, the normal closure of the commutator $[u, v]$) in $F_k$. By the Greendlinger lemma \ref{Lem:Green} and the choice of $m$, we have $N\cap \mathcal T\subseteq M\cap \mathcal T=\emptyset$. Note that $N$ is also normal in $H$. Applying Theorem \ref{Thm:DF} (a) and taking into account that $\langle u,v\rangle $ is a free factor of $F_k$, we conclude that the quotient group $Q=G/\ll N\rr$ is hyperbolic relative to a subgroup
$$
R\cong H/N\cong F_k/N\times K \cong ((\ZZ \times \ZZ)\ast F_{12})\times K.
$$
Furthermore, part (b) of Theorem \ref{Thm:DF} implies that the restriction of the natural homomorphism $\e\colon G\to Q$ to $\S$ is injective. Therefore, we have $(Q, \e(X))\in W(G,X,r)\subseteq V$ (see Lemma \ref{Ex:quot} (a)).

Note that the Dehn function of $R$ is quadratic. Since $Q$ is hyperbolic relative to $R$, the Dehn function of $Q$ is at most quadratic by \cite[Corollary 2.41]{Osi06}. In fact, it is exactly quadratic; indeed, $Q$ cannot be hyperbolic since hyperbolic groups do not contain subgroups isomorphic to $\ZZ\times \ZZ$. Finally, taking the quotients of $R$ by the normal closures of finite index subgroups of $\ZZ \times \ZZ\le R$, one can show that $R$ is fully residually non-elementary hyperbolic. By Corollary \ref{Cor:FRH}, we obtain $(Q,\e(X))\in \overline{\mathcal H}_n$. Thus, we have $(Q,\e(X))\in U$.
\end{proof}

\begin{rem}
Elaborating on the proof of Lemma \ref{Lem:quad}, one can show that $\overline{\mathcal H}_n$ contains a dense subset consisting of groups each of which is hyperbolic relative to a subgroup isomorphic to $\mathbb Z^2\times K$ for some finite group $K$. Indeed, this follows from the fact that $((\ZZ \times \ZZ)\ast F_{12})\times K$ is hyperbolic relative to $\mathbb Z^2\times K$, which can be established using \cite[Theorem 2.40]{Osi06}.
\end{rem}

\begin{lem}[Osin, \cite{Osi21}]\label{Osi21}
For every integer $n\ge 2$, a generic element $(G,X)\in \overline{\mathcal H}_n$ is approximated by hyperbolic groups in the sense of Definition \ref{Def:wah}.
\end{lem}

\begin{proof}
This result was obtained in the proof of  \cite[Theorem 2.11]{Osi21} for a topological union $\overline{\mathcal H}=\bigcup_{n\in \NN}\overline{\mathcal H}_n$ in place of $\overline{\mathcal H}_n$, where $\overline{\mathcal{H}}_n$ appears with the same topology as here, as an open subset of $\overline{\mathcal{H}}$. The claim thus follows.
\end{proof}

\begin{proof}[Proof of Theorem \ref{Thm:HL}]
Let $\sim_{\mathrm{Ds}}$ denote the equivalence relation induced by the quasi-order $\preccurlyeq_{\mathrm{Ds}}$. Combining Corollary \ref{Cor:Hyp}, Corollary \ref{Cor:Quad}, and Lemma \ref{Lem:quad}, we obtain that $\overline{\mathcal H}_{n}$ contains two distinct dense $\sim_{\mathrm{Ds}}$-equivalence classes: one consisting of groups with linear Dehn spectrum and the other consisting of groups with Dehn spectrum equivalent to $(n/m)^2$. Together with Proposition \ref{Prop:Fs}, this allows us to apply Corollary \ref{Cor:2dec} to the quasi-order $\preccurlyeq_{\mathrm{Ds}}$ on $\overline{\mathcal H}_{n}$, which yields part (a) of the theorem.

By Lemma \ref{Osi21}, there is a comeager subset $C\subseteq \overline{\mathcal H}_n$ such that, for every $(G,X)\in C$, the group $G$ is approximated by hyperbolic groups. Let $$C_0= \{ (G,X)\in C\mid f_{G,X}(k,m,n)\sim n/m\}= \{ (G,X)\in C\mid f_{G,X}(k,m,n)\preccurlyeq n/m\}.$$ Corollary \ref{Cor:LHk} guarantees that $f_{G,X}(k,m,n)$ essentially depends on $k$ for every $(G,X)\in C\smallsetminus C_0$. By Proposition \ref{Prop:Fs} (b), $C_0$ is an $F_\sigma$ subset of $\mathcal G_n$. Further, by Lemma \ref{Lem:quad}, $C_0$ is nowhere dense in $\mathcal G_n$. Thus, $C_0$ is meager, $C\smallsetminus C_0$ is comeager, and we obtain the second part of the theorem.
\end{proof}

\begin{rem}\label{Rem:appr}
Part (b) of Theorem \ref{Thm:HL} together with Lemma \ref{Osi21} immediately imply that there exists a finitely generated group $G$ approximated by hyperbolic groups such that $f_G(k,m,n)\not\sim n/m$. This contrasts with our result about groups well-approximated by hyperbolic ones (see Proposition \ref{Prop:LimHyp} (b)).
\end{rem}

Using a similar approach, we obtain the following proposition, which is inspired by \cite[Proposition 5.2]{MOW} (for a similar yet different idea see also \cite[Proposition 4.4]{BC}) and will be used in the proof of Theorem \ref{Thm:FE}. We believe it is also of independent interest and may have other applications.  

\begin{prop}\label{Prop:Cext}
Let $G$ be a finitely generated group such that the center of $G$ decomposes as $$Z(G)=\bigoplus\limits_{i\in \mathbb N} A_i,$$
where each $A_i$ is a non-trivial finite abelian group. For each subset $I\subseteq \mathbb N$, we define $$Z_I =\left\langle \bigcup\limits_{i\in I}A_i \right\rangle\;\;\;\; {\rm  and }\;\;\;\; G_I=G/Z_I.$$
If $f_G\not\sim f_{G/Z(G)}$, then the set $\{ G_I\mid I\subseteq \NN\}$ contains $2^{\aleph_0}$ groups with pairwise incomparable Dehn spectra.
\end{prop}

\begin{proof}
We fix any finite generating set $X$ of $G$. For every $I\subseteq \NN$,  let $n=|X|$ and let $X_I$ denote the image of $X$ in $G_I$. We also denote by $\mathcal C$ the set $2^\NN$ endowed with the product topology. It can be shown that the map $\mathcal{C}\to \mathcal G_n$ given by $I\mapsto (G_I,X_I)$ is continuous (see \cite[Lemma 5.1]{MOW}). In particular, the set $S=\{ G_I\mid I\subseteq \NN\}$ is closed in $\mathcal G_n$ being a continuous image of a compact set, and thus is a Polish space.

The sets $$C_1=\{I \mid |I|<\infty\}\;\;\;\; {\rm and}\;\;\;\; C_2=\{I \mid |\NN\smallsetminus I|<\infty\}$$ are both dense in $\mathcal C$. Therefore, the sets $$S_0=\{ (G_I, X_I)\mid |I|<\infty\}\;\;\;\; {\rm and}\;\;\;\;  S_1=\{ (G_I, X_I)\mid |\NN\smallsetminus I|<\infty\}$$ are dense in $S$. If $I\in C_1$, the group $G_I$ is the quotient of $G$ by the finite normal subgroup $\bigoplus_{i\in I} A_i$ and, therefore, is quasi-isometric to $G$. Similarly, if $I\in C_2$, then the group $G_I$ is quasi-isometric to $G/Z(G)$ being an extension of a finite group by $G/Z(G)$. By Theorem~\ref{Thm:QI}, each of the subsets $S_1$ and $S_2$ consists of $\sim_{\mathrm{Ds}}$-equivalent groups. Combining this with the assumption $f_G\not\sim f_{G/Z(G)}$, we obtain that $S$ contains two distinct dense $\sim_{\mathrm{Ds}}$-classes. It remains to apply Corollary \ref{Cor:2dec} to the quasi-order $\preccurlyeq_{\mathrm{Ds}}$ on $S$.
\end{proof}


\section{Groups of finite exponent}\label{Sec:GFE}



\subsection{Olshanskii's graded small cancellation theory}\label{Sec:O}


Our next goal is to prove the linearity of the Dehn spectrum of free Burnside groups of sufficiently large odd exponent. This result will then be used to prove Theorem \ref{Thm:FE}. Our proofs make use of Olshanskii's graded small cancellation technique used in the geometric proof of the Novikov-Adian theorem \cite{Ols83}. To make our exposition as self-contained as possible, we review the necessary background from \cite{book}.

The \emph{free Burnside group} $B(r,N)$ of exponent $N$ and rank $r$ is the free group in the variety of groups of exponent $N$. Throughout the rest of this section, we assume that $N$ is odd and large enough, and $r\ge 2$. By \cite[Theorem 19.1]{book}, the group $B(r,N)$ can be defined by the presentation
\begin{equation}\label{B}
B(r,N)=\left\langle X  \; \left| \; R=1,\, R\in \mathcal R=\bigcup\limits_{i=1}^\infty
\mathcal R_i\right.\right\rangle ,
\end{equation}
where $X=\{ x_1, \ldots, x_r\}$ and the sets of relators $\mathcal R_0\subseteq \mathcal R_1\subseteq \ldots$  are constructed as follows.
Let $\mathcal R_0 =\emptyset $. By induction, suppose that $i> 0$ and we
have already defined the sets $\mathcal R_{j}$ for all  $j=0,\ldots,i-1$. Let $G_{i-1}=\langle X \mid R=1, R\in \mathcal R_{i-1}\rangle $. We define $$\mathcal R _{i}=\mathcal R _{i-1} \cup \{ P^N\mid P\in \mathcal P_i\},$$ where the set of \emph{periods} $\mathcal P_{i}$ is a maximal set of words in the alphabet $X\cup X^{-1}$ satisfying the following conditions.

\begin{enumerate}
\item[(a)] $\mathcal P_{i}$ consists of words of length $i$ that are not conjugate to powers of words of length less than $i$ in $G_{i-1}$.

\item[(b)] If $A,B\in \mathcal P_{i}$ and $A\not\equiv B$, then $A$ is not conjugate to $B^{\pm 1}$ in the group $G_{i-1}$.
\end{enumerate}

Results of Chapters 5 and 6 of \cite{book} involve a sequence of fixed small parameters
\begin{equation}\label{para}
1/2 > \alpha >\beta >\gamma> \e> \zeta >\iota=1/N.
\end{equation}
The exact relations between these parameters are described by a system of inequalities, which can be made consistent by choosing each parameter in the sequence to be sufficiently small compared to all preceding parameters. We can also assume that the parameters satisfy any additional finite system of inequalities that can be made consistent in this way; this is called the \emph{lowest parameter principle} in \cite[Section 15.1]{book}. To help the reader follow our proofs, we use the abbreviation \emph{LPP} to designate inequalities that involve the lowest parameter principle.

Let
\begin{equation}\label{Gi}
G_i=\langle X\mid \mathcal R_i\rangle .
\end{equation}
Van Kampen diagrams over presentations (\ref{B}) and (\ref{Gi}) considered in \cite{book} can also have edges labeled by $1$ and the so-called \emph{$0$-faces} whose labels represent $1$ in the free group $F(X)$. Edges labelled by $1$ are assumed to be of length $0$ and are not counted when computing lengths of paths and distances in van Kampen diagrams. Faces labelled by elements of $\mathcal R$ are called \emph{ $\mathcal R$-faces}.

Let $i\ge 1$ and suppose we have a pair of distinct faces $\Pi_1$ and $\Pi_2$ in a diagram $\Delta$ over (\ref{B}) such that their boundaries $p_1$ and $p_2$ (oriented in the same way, e.g., clockwise) are labeled by $P^N$ and $P^{-N}$ for some period $P\in \mathcal P_i$. Further, suppose that there is a simple path $t$ in $\Delta $ from $(p_1)_-$ to $(p_2)_-$ such that $T\equiv \Lab(t)$ represents $1$ in $G_{i-1}$. Then $$\Lab(p_1tp_2t^{-1})\equiv P^NTP^{-N}T^{-1}=1$$ in $G_{i-1}$ and, therefore, the subdiagram of $\Delta $ bounded by $p_1tp_2t^{-1}$ can be replaced with a diagram $\Sigma$ over $G_{i-1}$ (see Fig. \ref{figred}).

We call such $\mathcal R$-faces $\Pi_1$ and $\Pi_2$  a \emph{reducible pair} and refer to the procedure described in the previous paragraph as a \emph{reduction}. A Kampen diagram is \emph{reduced} if it contains no reducible pairs of faces. Note that every van Kampen diagram can be transformed into a reduced one by finitely many reductions. Indeed, every reduction lexicographically decreases the {\it type} $\tau(\Delta)=(\tau_1,\tau_2,\dots)$ of the diagram, where $\tau_j$ is the number of faces in $\Delta$ labelled by relators from $\mathcal R_j$.

\begin{figure}
  \begin{center}
  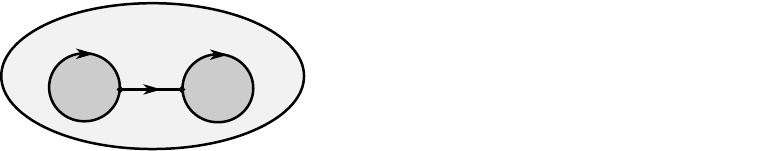
  \end{center}
  \vspace*{-3mm}
  \caption{A reduction}\label{figred}
 \end{figure}

Next, we discuss the notion of a contiguity subdiagram. The precise definition is somewhat technical and is not used in our paper. For our purposes, it suffices to know that a \emph{contiguity subdiagram} $\Gamma$ of an $\mathcal R$-face $\Pi$ to a subpath $q$ of $\partial\Delta$ is a subdiagram of $\Delta $ bounded by a path $s_1p_1s_2p_2$, where $p_1$ and $p_2$ are subpaths of $\partial\Pi_1$ and $q$, respectively, and $\Gamma$ does not contain $\Pi$ (see Fig. \ref{figcs}). In what follows, we call $$\partial \Gamma =s_1p_1s_2p_2$$ the \emph{canonical boundary decomposition} of $\Gamma$. The paths $s_1$ and $s_2$ (respectively, $p_1$ and $p_2$) are called the \emph{side sections} (respectively, \emph{contiguity sections}) of $\partial \Gamma$). The ratio $\ell(p_1)/\ell(\partial \Pi )$ is called the {\it contiguity degree} of $\Pi $ to $\partial \Delta$ and is denoted by $(\Pi , \Gamma , \partial \Delta)$.

By \cite[Lemma 19.4]{book}, every reduced diagram $\Delta$ over the presentation (\ref{B}) is an \emph{$A$-map}. Again, we do not discuss the definition of an $A$-map since we do not use it here. We only need to know that all lemmas about $A$-maps stated in Chapter 5 of \cite{book} apply to reduced diagrams. Below, we collect some necessary results.

\begin{figure}
  \begin{center}
  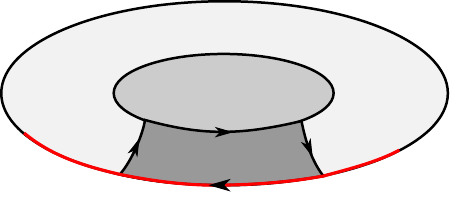
  \end{center}
  \vspace*{-3mm}
  \caption{A contiguity subdiagram}\label{figcs}
 \end{figure}

\begin{lem}[{\cite[Corollary 17.1]{book}}]\label{beta} If a reduced diagram $\Delta$ over (\ref{B}) contains an $\mathcal R$-face $\Pi$, then $\ell(\partial\Delta)>(1-\beta)\ell(\partial\Pi)$.
\end{lem}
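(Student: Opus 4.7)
The plan is to follow Olshanskii's standard strategy for results of this type: argue by contradiction and produce a reducible pair of $\mathcal R$-faces in $\Delta$, violating the assumption that $\Delta$ is reduced. Suppose for contradiction that $|\partial\Delta|\le (1-\beta)|\partial\Pi|$. Since $\Delta$ is a reduced diagram over (\ref{B}), \cite[Lemma 19.4]{book} tells us that $\Delta$ is an $A$-map, so the general structural results of Chapter 5 of \cite{book} apply to it. In particular, the contiguity-degree estimates for $A$-maps become available for the distinguished face $\Pi$.

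First, I would apply the standard lower bound on the total ``covered'' portion of $\partial\Pi$: in an $A$-map there exists a system of pairwise disjoint contiguity subdiagrams from $\Pi$ to other $\mathcal R$-faces or to $\partial\Delta$, whose contiguity sections along $\partial\Pi$ have total length at least $\alpha|\partial\Pi|$ (this is the standard covering lemma for $A$-maps). Let $d_{\mathrm{ext}}$ and $d_{\mathrm{int}}$ denote the total degree of contiguity of $\Pi$ to $\partial\Delta$ and to other $\mathcal R$-faces, respectively, so that $d_{\mathrm{ext}}+d_{\mathrm{int}}\ge \alpha$.

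Second, I would bound $d_{\mathrm{ext}}$ by tracking the contiguity sections running along $\partial\Delta$. Each such section contributes to $\ell(\partial\Delta)$, and after absorbing the short side sections using the LPP (their total length is controlled by $\e|\partial\Pi|$), one gets
\[
d_{\mathrm{ext}}\cdot|\partial\Pi|\le |\partial\Delta|+O(\e)|\partial\Pi|\le (1-\beta+O(\e))|\partial\Pi|.
\]
Combining with the covering bound yields $d_{\mathrm{int}}\ge \alpha-(1-\beta)-O(\e)>0$ by the lowest parameter principle. Hence some contiguity subdiagram $\Gamma$ from $\Pi$ to another $\mathcal R$-face $\Pi'$ has contiguity degree $(\Pi,\Gamma,\Pi')\ge \e$ (by pigeonhole applied to the system of disjoint contiguity subdiagrams, whose number is controlled by $1/\e$).

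The final step, which I expect to be the main obstacle, is converting this large internal contiguity into a reducible pair. Here the graded small cancellation structure of the construction of $\mathcal P_i$ enters decisively: the large overlap between a cyclic shift of $\Lab(\partial\Pi)$ and of $\Lab(\partial\Pi')^{\pm 1}$ along the contiguity sections forces, via conditions (a) and (b) in the definition of the periods, that $\Pi$ and $\Pi'$ are labeled by $P^N$ and $P^{-N}$ for the \emph{same} period $P\in \mathcal P_i$ and with opposite orientations. The side sections $s_1,s_2$ of $\Gamma$ are short, and the label of the path $t$ obtained by concatenating an appropriate portion of $s_1$, an arc on $\partial\Pi'$, and $s_2^{-1}$ represents $1$ in $G_{i-1}$ (by an $A$-map argument in the rank-$(i-1)$ subdiagram). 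This means precisely that $(\Pi,\Pi')$ is a reducible pair in the sense discussed right before the lemma, contradicting reducedness of $\Delta$. The technical core of the argument is therefore the bookkeeping with the small parameters in (\ref{para}) ensuring that the quantitative inequality $d_{\mathrm{int}}>0$ above indeed produces a pair with contiguity degree large enough for Olshanskii's compatibility-of-periods results to force identical periods.
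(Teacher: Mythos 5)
The paper does not prove this lemma at all; it imports it verbatim from \cite[Corollary 17.1]{book}, so your attempt has to stand on its own. It does not, because the arithmetic at the heart of your contradiction fails. You assume $|\partial\Delta|\le(1-\beta)|\partial\Pi|$, invoke a ``covering'' bound $d_{\mathrm{ext}}+d_{\mathrm{int}}\ge\alpha$, bound $d_{\mathrm{ext}}$ by roughly $(1-\beta)+O(\e)$, and conclude $d_{\mathrm{int}}\ge\alpha-(1-\beta)-O(\e)>0$ ``by LPP''. But the parameters in (\ref{para}) satisfy $1/2>\alpha>\beta$, so $\alpha-(1-\beta)=\alpha+\beta-1<0$; the lowest parameter principle only lets you shrink each parameter relative to the \emph{preceding} ones and cannot change the sign of $\alpha+\beta-1$. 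Worse, your bound on $d_{\mathrm{ext}}$ is itself wrong: the comparison between the contiguity arc $p_1\subseteq\partial\Pi$ and the arc $p_2\subseteq\partial\Delta$ is Lemma \ref{cont}(b), which only gives $\ell(p_1)<(1+2\beta)\ell(p_2)$, hence $d_{\mathrm{ext}}\le(1+2\beta)(1-\beta)=1+\beta-2\beta^2>1$. So the hypothesis $|\partial\Delta|\le(1-\beta)|\partial\Pi|$ is perfectly consistent with $\Pi$ being contiguous to $\partial\Delta$ along essentially its entire boundary, and no internal contiguity --- hence no reducible pair --- can be forced by this local count. In addition, the ``standard covering lemma for $A$-maps'' you cite (total contiguity of a \emph{prescribed} cell at least $\alpha|\partial\Pi|$) is not a theorem of Chapter 5 of \cite{book}: a covering condition of that type is part of the \emph{definition} of $B$- and $C$-maps in Chapter 7, whereas for $A$-maps the available structural result is the $\gamma$-cell theorem (Lemma \ref{gamma} here), which produces \emph{some} $\mathcal R$-face with large total contiguity to the contour sections, not the given face $\Pi$.

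The argument in \cite{book} is global rather than local: one excises $\Pi$ from $\Delta$, observes that $(\partial\Pi)^{-1}$ becomes a smooth section of the contour of the resulting $A$-map, and applies Theorem 17.1 of \cite{book} (the perimeter estimate for smooth sections), whose proof is an induction over the whole diagram driven by the $\gamma$-cell theorem; no single-cell contiguity count yields the sharp constant $1-\beta$. Your final step --- that an $\e$-contiguity between two $\mathcal R$-faces forces equal periods and hence a reducible pair --- is a genuine ingredient of the theory, but the route by which you try to produce such a contiguity does not work.
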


\begin{lem}\label{cont}
Let $\Delta$ be a reduced diagram over (\ref{B}) and let $\Gamma$ be a contiguity subdiagram of an $\mathcal R$-face $\Pi$ to a subpath $p$ of $\partial\Delta$ with the canonical boundary decomposition  $\partial(\Pi,\Gamma,q)=s_1p_1s_2p_2$. Then the following holds.
\begin{enumerate}
\item[(a)] \cite[Lemma 15.3]{book}. $\max (\ell(s_1),\ell(s_2))<\zeta\ell(\partial\Pi)$.

\item[(b)] \cite[Lemma 15.4]{book}. If $(\Pi, \Gamma, q)\ge \varepsilon$, then $\ell(p_1)<(1+2\beta)\ell(p_2)$.

\item[(c)] \cite[Lemma 15.6]{book}. If $(\Pi, \Gamma, q)\ge 1/2+\alpha$ and $\partial \Pi= p_1t$, then $\ell(s_1t^{-1}s_2)< \ell(p_2)$.
\end{enumerate}
\end{lem}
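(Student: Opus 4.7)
The statement of Lemma \ref{cont} is a compilation of three results from Olshanskii's book \cite{book}, namely Lemmas 15.3, 15.4, and 15.5. Since the authors have explicitly furnished the precise citations, the most honest proof is simply to invoke those lemmas directly: by \cite[Lemma 19.4]{book} every reduced diagram over (\ref{B}) is an $A$-map, so all structural results about contiguity subdiagrams in $A$-maps apply verbatim. The plan, therefore, is not to reprove anything but to verify that the hypotheses match.

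Step one is to check that the objects in our setup (reduced diagrams over (\ref{B}), $\mathcal R$-faces, contiguity subdiagrams with canonical boundary decomposition $s_1p_1s_2p_2$) are precisely the ones used in the definitions of Chapter 5 of \cite{book}. This is immediate from the quoted \cite[Lemma 19.4]{book}. Step two is to read off the three conclusions. The conclusion (a) is a direct translation of \cite[Lemma 15.3]{book} using $\ell(\partial \Pi)=Nr(\Pi)$ for $\Pi$ labelled by $P^{\pm N}$ with $P\in \mathcal P_i$, $|P|=i=r(\Pi)$. The conclusions (b) and (c) are verbatim quotes of \cite[Lemmas 15.4 and 15.5]{book} after the identification of contiguity degrees.

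Conceptually, the reason these estimates hold is the general philosophy of $A$-maps: in any reduced diagram the faces behave like cells in a $C'(\lambda)$ small cancellation complex with $\lambda$ controlled by the parameters in (\ref{para}). Part (a) says the side sections of a contiguity subdiagram contribute only $O(\zeta)$ relative to the perimeter of $\Pi$ — geometrically, a long side section would itself carry enough faces to violate the rank-inductive bounds from Chapter 5 of \cite{book}. Part (b) compares the matched arcs $p_1$ (on $\partial\Pi$) and $p_2$ (on $\partial\Delta$): when they overlap substantially (contiguity degree at least $\e$), the intervening contiguity subdiagram is so thin that $p_1$ and $p_2$ are nearly parallel, yielding the factor $1+2\beta$ (LPP consistency between $\e$ and $\beta$ is exactly what powers this). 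Part (c) is the sharpening to the regime of contiguity degree greater than $1/2+\alpha$, where the $O(\zeta)$ side sections together with $p_1$ are dominated by the long $p_2$.

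The main (and only) obstacle is bookkeeping: one must ensure that the parameter inequalities in (\ref{para}) we are implicitly using are among those declared admissible by the lowest parameter principle in \cite[Section 15.1]{book}. Since (a)–(c) are stated with exactly the same parameters as in \cite[Lemmas 15.3–15.5]{book}, this check is automatic, and no independent combinatorial argument is needed beyond pointing to the references.
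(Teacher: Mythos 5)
Your proposal is correct and matches the paper's treatment exactly: the paper offers no independent argument for this lemma, but simply quotes \cite[Lemmas 15.3--15.5]{book}, justified by the remark that every reduced diagram over (\ref{B}) is an $A$-map by \cite[Lemma 19.4]{book}, so the Chapter 5 results on contiguity subdiagrams apply verbatim. Your hypothesis-matching and parameter bookkeeping remarks are exactly the (implicit) content of the paper's citation, so nothing further is needed.
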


We say that a (combinatorial) path $p$ in a van Kampen diagram $\Delta$ is \emph{geodesic} if it is geodesic in $\Delta^{(1)}$. We will need an immediate consequence of Lemma \ref{cont} (c) and the inequality $1/2+\alpha < 1- \gamma$ (LPP).

\begin{cor}\label{geod}
Let $\Delta$ be a van Kampen diagram over (\ref{B}), $q$ a subpath of $\partial \Delta$, $\Gamma$ a contiguity subdiagram of an $\mathcal R$-face $\Pi$ to $q$. If $q$ is a geodesic path in $\Delta$, then $(\Pi, \Gamma, q)<1/2+\alpha < 1-\gamma $.
\end{cor}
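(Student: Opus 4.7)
The plan is to argue by contradiction, reducing everything to Lemma \ref{cont}(c) combined with the geodesicity hypothesis. Assume, aiming for a contradiction, that $(\Pi,\Gamma,q)\ge 1/2+\alpha$. Writing the canonical boundary decomposition of $\Gamma$ as $\partial\Gamma = s_1p_1s_2p_2$, where $p_2$ is a subpath of $q$ and $p_1$ is a subpath of $\partial\Pi$, Lemma \ref{cont}(c) yields $\ell(s_1p_1s_2)<\ell(p_2)$.

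The key observation is that $p_2$ and $s_1p_1s_2$ share endpoints, up to orientation: tracing the boundary of $\Gamma$ shows $(s_1p_1s_2)_- = (p_2)_+$ and $(s_1p_1s_2)_+ = (p_2)_-$, so $s_1p_1s_2$ provides an alternative path in $\Delta^{(1)}$ between the endpoints of $p_2^{-1}$. Since $p_2$ is a subpath of the geodesic $q$ in $\Delta$, it is itself geodesic in $\Delta$; but the existence of the strictly shorter path $s_1p_1s_2$ contradicts this. Hence $(\Pi,\Gamma,q)<1/2+\alpha$.

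The second inequality $1/2+\alpha<1-\gamma$ is immediate from the lowest parameter principle (LPP), since $\gamma$ may be chosen arbitrarily small relative to $\alpha<1/2$. I don't foresee any genuine obstacle here — the lemma is essentially a bookkeeping statement pointing out that Lemma \ref{cont}(c) cannot fire when the contiguity section lies on a geodesic; the only thing to be careful about is matching the orientations in the canonical boundary decomposition so that $s_1p_1s_2$ is really a legitimate competitor to $p_2^{-1}$.
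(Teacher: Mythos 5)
Your argument is correct and is exactly the intended one: the paper states Corollary \ref{geod} as an immediate consequence of Lemma \ref{cont}(c) together with the LPP inequality $1/2+\alpha<1-\gamma$, and your contradiction via the shorter path $s_1p_1s_2$ competing with the geodesic subpath $p_2$ is precisely how that consequence is drawn. No issues.
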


The next lemma is an immediate consequence of \cite[Theorem 16.1]{book} (for $\ell \le 4$ it is stated as Corollary 16.1 in \cite{book}).  Note that the proof of Theorem 16.1 given in \cite{book} works for every $\ell\in \NN$, but we may have to decrease the parameters (\ref{para}) and use the lowest parameter principle explained above to make sure that all the inequalities involved in the proof are consistent as $\ell $ increases. For this reason, we need $\ell $ to be fixed in advance. We will need the case $\ell\le 6$ in our paper.

\begin{lem}\label{gamma}
Let $\Delta$ be a reduced diagram over (\ref{B}) containing at least one $\mathcal R$-face. Suppose that the boundary of $\Delta $ decomposes as $\partial \Delta =q_1\ldots q_\ell$, where $\ell\le 6$ . Then there is an $\cal R$-face $\Pi$ of $\Delta$ and disjoint contiguity subdiagrams $\Gamma_1,\ldots, \Gamma_\ell$ of $\Pi$ to $q_1,\ldots,q_\ell$, respectively (some of them may be absent) such that
$$\sum_{i=1}^\ell(\Pi,\Gamma_i,q_i)>1-\gamma.$$
\end{lem}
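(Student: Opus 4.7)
The plan is to invoke Olshanskii's Theorem 16.1 of \cite{book} essentially as a black box. First, I would observe that Lemma 19.4 of \cite{book}, recalled in the paragraph just before Lemma \ref{cont}, guarantees that every reduced diagram $\Delta$ over the presentation (\ref{B}) is an \emph{$A$-map}, so the entire combinatorial machinery of Chapter 5 of \cite{book} applies to $\Delta$ without any modification.

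Second, I would apply Theorem 16.1 of \cite{book} directly to the $A$-map $\Delta$ with its given boundary decomposition $\partial\Delta = q_1\ldots q_\ell$. That theorem produces exactly what is asserted: an $\mathcal R$-face $\Pi$ together with disjoint contiguity subdiagrams $\Gamma_1,\ldots,\Gamma_\ell$ of $\Pi$ to $q_1,\ldots,q_\ell$ (some possibly absent) whose contiguity degrees to the corresponding boundary sections sum to more than $1-\gamma$.

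The only real subtlety is that \cite{book} extracts Corollary 16.1, the directly quotable instance of Theorem 16.1, only for $\ell\le 4$, whereas our statement requires $\ell\le 6$. However, inspection of the proof of Theorem 16.1 in \cite{book} shows that it is a weight-function argument in which $\ell$ enters only through a finite system of inequalities among the small parameters of (\ref{para}); the underlying geometric mechanism is entirely independent of any numerical bound on $\ell$. Here the \emph{lowest parameter principle} takes over: because $\ell=6$ is fixed in advance (once and for all in our paper), we may shrink $\alpha,\beta,\gamma,\varepsilon,\zeta,\iota$ sufficiently so that the additional inequalities arising when $\ell$ grows from $4$ to $6$ remain consistent with all the other inequalities required in Chapters 5--6 of \cite{book}.

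The main obstacle, such as it is, is therefore purely bookkeeping: one has to verify that the finite system of parameter inequalities used in the proof of Theorem 16.1 of \cite{book} still closes up after the slightly smaller choice of parameters dictated by $\ell=6$. No new geometric idea is needed, which is why we regard Lemma \ref{gamma} as an immediate consequence of \cite[Theorem 16.1]{book}.
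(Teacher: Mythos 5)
Your proposal matches the paper's justification exactly: the paper also derives the lemma directly from \cite[Theorem 16.1]{book} (noting that the $\ell\le 4$ case is Corollary 16.1 there), uses \cite[Lemma 19.4]{book} to know that reduced diagrams over (\ref{B}) are $A$-maps, and invokes the lowest parameter principle to handle the extension to $\ell\le 6$ with $\ell$ fixed in advance. No differences to report.
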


\begin{rem}
  Note that the exponent $N$ depends on parameters from (\ref{para}), which are chosen according to the lowest parameter principle. In \cite{book} odd $N > 10^{10}$ is sufficient. In our case, the lower bound on $N$ might be larger, since we consider the case $\ell\le 6$ in this paper.   
\end{rem}

 If sections $q_1, \ldots, q_\ell$ in Lemma \ref{gamma} are geodesic, Corollary \ref{geod} implies that at least two contiguity subdiagrams $\Gamma_i$ must exist. The lemma below will be used several times to deal with the case when exactly two contiguity subdiagrams are present. In what follows, we denote by $\d_\Delta $ the length function on $\Delta^{(1)}$.

\begin{lem}\label{corner}
Let $\Delta$ be a reduced diagram over (\ref{B}), $p$ and $q$ be geodesic subpaths of $\partial \Delta$. Suppose that $\Delta$ contains an $\cal R$-face $\Pi$ and disjoint contiguity subdiagrams $\Gamma_1$, $\Gamma_2$ of $\Pi$ to $p$ and $q$, respectively, such that
\begin{equation}\label{Eq:G1G2}
(\Pi,\Gamma_1,p)+(\Pi,\Gamma_2,q)>1-\gamma.
\end{equation}
Then $\d_\Delta (p_-, q) < \ell(p)$.
\end{lem}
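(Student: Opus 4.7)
My plan is to construct a short ``detour'' from $p_-$ to $q$ that slips across the face $\Pi$. Write $p = a_1 u_2 a_2$, where $u_2$ is the subpath of $p$ appearing in the canonical boundary decomposition $\partial \Gamma_1 = s_1 u_1 s_2 u_2$ with $u_1 \subseteq \partial \Pi$; analogously let $\partial \Gamma_2 = t_1 v_1 t_2 v_2$ with $v_1 \subseteq \partial \Pi$ and $v_2 \subseteq q$. Set $\psi_i = (\Pi, \Gamma_i, \cdot)$. Since both $p$ and $q$ are geodesic, Corollary~\ref{geod} gives $\psi_i < 1/2 + \alpha$ for $i = 1, 2$; combined with the hypothesis $\psi_1 + \psi_2 > 1 - \gamma$, this forces $\psi_1, \psi_2 > 1/2 - \alpha - \gamma$, which in particular exceeds $\varepsilon$ by LPP.

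I would then collect the quantitative consequences. By Lemma~\ref{cont}(a), every side section has length less than $\zeta\ell(\partial\Pi)$, and by Lemma~\ref{cont}(b) (applicable since $\psi_1 \ge \varepsilon$),
$$
\ell(u_2) > \frac{\psi_1 \ell(\partial \Pi)}{1+2\beta} > \frac{(1/2 - \alpha - \gamma)\ell(\partial\Pi)}{1+2\beta}.
$$
Since $\Gamma_1$ and $\Gamma_2$ are disjoint, $u_1$ and $v_1$ are disjoint arcs of $\partial \Pi$, and the two complementary arcs $w', w''$ satisfy $\ell(w') + \ell(w'') = (1 - \psi_1 - \psi_2)\ell(\partial\Pi) < \gamma\ell(\partial\Pi)$, so each is strictly less than $\gamma\ell(\partial\Pi)$. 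Now build the path: from $p_-$ walk along $a_1$ to an endpoint of $u_2$, cross $\Gamma_1$ along the incident side section to reach an endpoint of $u_1$ on $\partial\Pi$, traverse a complementary arc of $\partial\Pi$ to an endpoint of $v_1$, and finally cross $\Gamma_2$ along its incident side section to land on a vertex $z \in v_2 \subseteq q$. The length of this concatenation in $\Delta^{(1)}$ is strictly less than
$$
\ell(a_1) + 2\zeta\ell(\partial\Pi) + \gamma\ell(\partial\Pi) = \ell(a_1) + (2\zeta + \gamma)\ell(\partial\Pi).
$$
By LPP we have $(2\zeta + \gamma)(1+2\beta) < 1/2 - \alpha - \gamma$, so $(2\zeta + \gamma)\ell(\partial\Pi) < \ell(u_2)$, and therefore $\d_\Delta(p_-, q) < \ell(a_1) + \ell(u_2) \le \ell(p)$.

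The main point requiring care is the orientation bookkeeping in the shortcut construction: one must check that from an endpoint of $u_1$ reached via a side section of $\Gamma_1$, a complementary arc of $\partial\Pi$ actually leads to an endpoint of $v_1$ that is joined by a side section of $\Gamma_2$ to $v_2$. This works because the two arcs $w', w''$ each join $u_1$ to $v_1$ (one on each side of $\Pi$), so from either endpoint of $u_1$ there is a complementary arc ending at an endpoint of $v_1$, giving two symmetric valid choices; the length estimate above holds for either one. The geodesicity of \emph{both} $p$ and $q$ is used essentially, since it is what converts the sum hypothesis into the two-sided lower bound $\psi_i > 1/2 - \alpha - \gamma$, which in turn makes $\ell(u_2)$ large enough to dominate the overhead $(2\zeta + \gamma)\ell(\partial\Pi)$.
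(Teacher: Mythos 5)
Your proof is correct and follows essentially the same route as the paper: use Corollary \ref{geod} on $q$ to force $(\Pi,\Gamma_1,p)>1/2-\alpha-\gamma$, hence a long contiguity section on $p$ by Lemma \ref{cont}(b), and then shortcut from $p_-$ to $q$ via a side section of $\Gamma_1$, a complementary arc of $\partial\Pi$ of length less than $\gamma\ell(\partial\Pi)$, and a side section of $\Gamma_2$, with the same LPP comparison at the end. The only (inessential) quibble is your closing remark: the argument really only needs the lower bound on $(\Pi,\Gamma_1,p)$, hence only the geodesicity of $q$, not the symmetric bound on $(\Pi,\Gamma_2,q)$.
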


\begin{figure}
  \begin{center}
  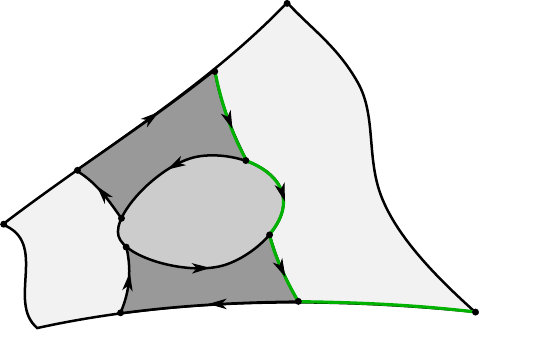
  \end{center}
  \vspace*{-3mm}
  \caption{The length of the green path is less than $\ell(p)$.}\label{figcor}
 \end{figure}

\begin{proof}
Let $\partial \Gamma_1=s_1p_1s_2p_2$ and $\partial \Gamma_2=t_1q_1t_2q_2$ be the canonical decompositions as shown on Fig. \ref{figcor}. By Corollary \ref{geod}, we have $(\Pi,\Gamma_2,q) <1/2+\alpha$. Hence, $(\Pi,\Gamma_1,p) > 1/2 - \gamma - \alpha  > \epsilon $ (LPP). Using Lemma \ref{cont} (b), we obtain
\begin{equation}\label{Eq:lq2}
\ell(p_2)> \frac{\ell(p_1)}{1+2\beta}> \frac{1/2 -\gamma-\alpha}{1+2\beta} \ell(\partial \Pi)> \frac 13 \ell(\partial \Pi) \;\;\; (LPP).
\end{equation}
Note that $\d_\Delta (p_-, q)$ is bounded from above by the length of the path labeled in green on Fig. \ref{figcor}. Let $z$ be the subpath of $\partial \Pi$ connecting $(t_1)_+$ to $(s_2)_-$ that does not contain $p_1$ and $q_1$. By (\ref{Eq:G1G2}), we have $$\ell(z) \le  \ell(\partial\Pi)-\ell(p_1)-\ell(q_1)< \gamma\ell(\partial\Pi).$$ Combining this with (\ref{Eq:lq2}) and Lemma \ref{cont} (a), we obtain
$$
\d_\Delta (p_-, q) \le \ell (t_1)+\ell(z) +\ell(s_2) + \ell(p) - \ell(p_2)< \ell(p) + (2\zeta +\gamma - 1/3) \ell(\partial \Pi) <\ell (p) \;\;\; (LPP).
$$
\end{proof}


\subsection{Dehn spectrum of free Burnside groups}


In this section, we compute the Dehn spectrum of free Burnside groups of sufficiently large odd exponent. A major step towards this goal is the following.

\begin{prop}\label{33ni}
Let $G_i=\langle X\mid \mathcal R_i\rangle $, where relations $R_i$ are defined in (\ref{B}). The Cayley graph $\Cay(G_i,X)$ is $7Ni$-hyperbolic.
\end{prop}

\begin{proof}
Consider an arbitrary geodesic triangle with vertices $x$, $y$, $z$ in $\Cay(G_i, X)$. We use the notation $[x,y]$, $[y,z]$, $[z,x]$ for the sides of this triangle. For any vertex $o\in [z,x]$, we will prove the inequality
\begin{equation}\label{doxyz}
\d_i(o, [x,y]\cup [y,z])\le 6.5 Ni,
\end{equation} where $\d_i$ denotes the metric on $\Cay(G_i, X)$. Without loss of generality, we can assume that
\begin{equation}\label{Eq:do}
\d_i(o, [x,y]\cup [y,z]) > 3Ni.
\end{equation}

\smallskip

\noindent{\it Case A.\;} Suppose first that the segment $[z,o]$ of the side $[z,x]$ intersects the closed $3Ni$-neighborhood of $[x,y]$, or the segment $[o,x]$ of $[z,x]$ intersects the closed $3Ni$-neighborhood of $[y,z]$. For definiteness, we consider the former case; the latter one is analogous.

Let $r_1$ be the shortest subpath of $[z,x]$ containing $o$ such that $$\d_i((r_1)_-, [x,y])= \d_i((r_1)_+, [x,y])=3Ni.$$ Let $u,v\in [x,y]$ be the vertices nearest to $(r_1)_+$ and $(r_1)_-$, respectively (see Fig. \ref{fig4-gon}). Let $r_2$ denote the subpath of $[x,y]^{\pm 1}$ connecting $u$ to $v$ and let $b_1$, $b_2$ be any geodesic paths in $\Cay(G_i,X)$ connecting $(r_1)_+$ to $u$ and $v$ to $(r_1)_-$. In what follows, we call the paths $r_1$, $r_2$ (respectively, $b_1$, $b_2$) \emph{red} (respectively, \emph{blue}). The quadrilateral $r_1b_1r_2b_2$ satisfies the following condition.

\begin{enumerate}
\item[($\ast$)] The distance in $\Cay(G_i, X)$ between distinct red sides and the length of every blue side are equal to $3Ni$.
\end{enumerate}

Let $\Delta $ be a reduced van Kampen diagram over (\ref{Gi}) with the boundary label $\Lab(\partial \Delta)\equiv \Lab(r_1b_1r_2b_2)$. By abuse of notation, we identify $\partial \Delta$ with its canonical image $r_1b_1r_2b_2$ in $\Cay(G_i,X)$. Note that $\Delta $ must contain at least one $\mathcal R$-face; for otherwise, $o$ belongs to $b_1r_2b_2$, which contradicts (\ref{Eq:do}). By Lemma \ref{gamma}, we can find an $\mathcal R$-face $\Pi $ and contiguity subdiagrams $\Gamma_1$, $\Gamma_2$, $\Sigma_1$, $\Sigma_2$ of $\Pi$ to $r_1$, $r_2$, $b_1$, $b_2$ respectively (some of these diagrams may be absent) such that $$\sum\limits_{j=1}^2\Big((\Pi,\Gamma_j, r_j) + (\Pi, \Sigma_j, b_j)\Big) \ge 1-\gamma$$ (see Fig. \ref{fig4-gon}.)

\begin{figure}
  \begin{center}
  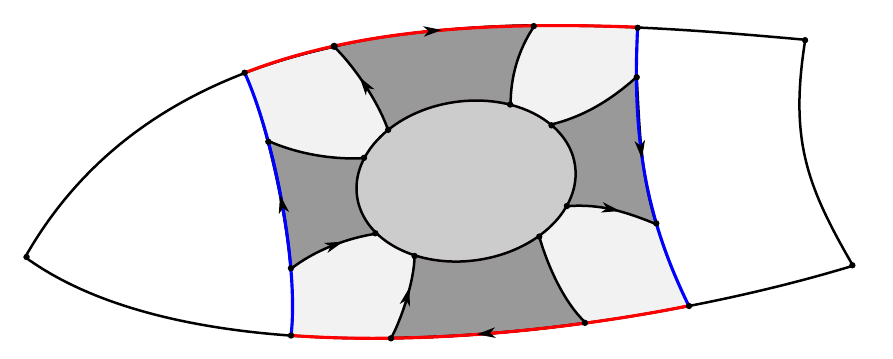
  \end{center}
  \vspace*{-3mm}
  \caption{Case A.}\label{fig4-gon}
 \end{figure}

By Corollary \ref{geod}, at least two of the contiguity subdiagrams $\Gamma_1$, $\Gamma_2$, $\Sigma_1$, $\Sigma_2$ must actually exist. Furthermore, suppose there are exactly two contiguity subdiagrams, and they are adjacent to sides of distinct colors, say $b_1$ and $r_2$ (other cases only differ by notation). Then Lemma \ref{corner} applied to $p=b_1$ and $q=r_2$ implies that
$$
\d_i((b_1)_-, r_2)\le \d_\Delta ((b_1)_-, r_2)< \ell(b_1)= 3Ni,
$$
which contradicts ($\ast$). Thus, there must exist two contiguity subdiagrams of $\Pi$ to sides of the same color (and possibly other contiguity subdiagrams). For $j=1,2$, we denote by $s_j$ and $t_j$ the side sections of $\partial\Gamma_j$ and $\partial\Sigma_j$ as shown on Fig. \ref{fig4-gon} and consider two subcases.

\smallskip

\noindent{\it Case A.1.\;} Suppose first that $\Gamma_1$ and $\Gamma_2$ are present. By Lemma \ref{cont} (a), we have
$$
\d_i((s_1)_-,(s_2)_+)\le \ell(s_1) +\ell(\partial \Pi)/2 + \ell (s_2) < (2\zeta +1/2)\ell (\partial\Pi) < Ni \;\; (LPP),
$$
which contradicts ($\ast$). Thus, this case is impossible.

\smallskip

\noindent{\it Case A.2.\;} Assume that $\Sigma_1$ and $\Sigma_2$ are present.
As above, we obtain $\d_i((t_1)_-, (t_2)_+) < Ni$. Since the path $r_1$ is geodesic, we have $\ell (r_1) \le \ell(b_1) +\d_i((t_1)_-, (t_2)_+) +\ell (b_2) <7 Ni$. Therefore, $\d_i(o, r_2)\le \ell(r_1)/2 +\ell(b_j) < 6.5Ni$.

\smallskip

\begin{figure}
  \begin{center}
  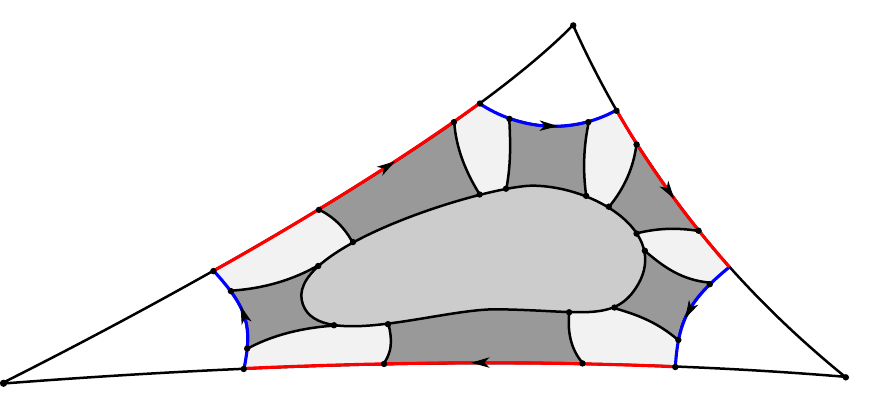
  \end{center}
  \vspace*{-3mm}
  \caption{Case B.}\label{fig6-gon}
 \end{figure}

\noindent{\it Case B.\;} Now suppose that $[z,o]$ (respectively, $[o,x]$) does not intersect the closed $3Ni$-neighborhood of $[x,y]$ (respectively, $[y,z]$).

Let $r_1$ be the shortest subpath of $[z,x]$ containing $o$ such that
$$
\d_i((r_1)_-, [x,y]\cup [y,z])=\d_i((r_1)_+, [x,y]\cup [y,z])=Ni.
$$
Let $u,v\in [x,y]\cup [y,z]$ be the nearest vertices to $(r_1)_+$ and $(r_1)_-$, respectively (see Fig.~\ref{fig6-gon}). The assumption of Case B implies that $u\in [x,y]$, $v \in [y,z]$, and $\d_i(u,v)\ge 2Ni>Ni$. Let $r_2$ (respectively, $r_3$) be the shortest segments of $[x,y]$ (respectively, $[y,z]$) starting at $u$ (respectively, ending at $v$) such that $\d_i((r_2)_+, (r_3)_-)= Ni$. We denote by $b_1$, $b_2$, $b_3$ any geodesic paths in $\Cay(G_i,X)$ connecting $(r_1)_+$ to $u$, $(r_2)_+$ to $(r_3)_-$, and $v$ to $(r_1)_-$, respectively. As above, paths $r_1$, $r_2$, $r_3$ and $b_1$, $b_2$, $b_3$ are called \emph{red} and \emph{blue}, respectively. By construction, the hexagon $r_1b_1r_2b_2r_3b_3$ satisfies the following analog of ($\ast$).

\begin{enumerate}
\item[($\ast\ast$)] The distance in $\Cay(G_i, X)$ between distinct red sides and the length of every blue side are equal to $Ni$.
\end{enumerate}

As above, we consider a reduced van Kampen diagram $\Delta $ over (\ref{Gi}) with boundary $r_1b_1r_2b_2r_3b_3$ (see Fig.~\ref{fig6-gon}) and note that $\Delta $ must contain at least one $\mathcal R$-face. By Lemma \ref{gamma}, there exists an $\mathcal R$-face $\Pi $ and contiguity subdiagrams $\Gamma_j$, $\Sigma_j$ of $\Pi$ to $r_j$ and $b_j$ for $j=1,2,3$ (some of these diagrams may be absent) such that
$$\sum\limits_{j=1}^3\Big((\Pi,\Gamma_j, r_j) + (\Pi, \Sigma_j, b_j)\Big) \ge 1-\gamma.$$

Arguing as in Case A, we show that at least two of these contiguity subdiagrams must be present, and if there are exactly two of them,  they cannot be adjacent to adjacent sides of distinct colors. Furthermore, we cannot have exactly two diagrams adjacent to ``opposite" sides of distinct colors. E.g., if we only have $\Sigma _1$ and $\Gamma_3$, then Lemma \ref{corner} and $(\ast\ast)$ imply that $$\d_i ((b_1)_-, r_3)\le \d_\Delta ((b_1)_-, r_3) \le \ell(b_1)=Ni;$$ this inequality contradicts the assumption that $[o,x]$ does not intersect the closed $3Ni$-neighborhood of $[y,z]$, which we made in Case B. Other cases are treated similarly. Summarising, we conclude that if exactly two contiguity diagrams are present, they must be adjacent to sides of distinct color.  Thus, we are left with only two cases.

\smallskip
\noindent{\it Case B.1.\;}  Two contiguity subdiagrams to red sides are present (here and in Case B.2 below, we do not exclude the possibility of having other contiguity subdiagrams as well). Arguing as in Case A.1, we obtain a contradiction.

\smallskip
\noindent{\it Case B.2.\;} Two contiguity subdiagrams to blue sides are present. First, assume that $\Sigma _1$ and $\Sigma_2$ are present.  Let $s_1$ and $s_2$ be any side sections of $\partial \Sigma_1$ and $\partial\Sigma_2$ (see Fig. \ref{fig6-gon}). By Lemma \ref{cont}~(a), we have
$$
\begin{array}{rcl}
\d_i((r_1)_+, [y,z]) & \le &  \ell(b_1) +\ell(s_1) +\ell(\partial \Pi )/2 +\ell (s_2) +\ell(b_2) \\&&\\ &\le & 2Ni +(2\zeta+1/2) \ell (\partial \Pi)\\&&\\ &< & 3Ni \;\;\; (LPP),
\end{array}
$$
which contradicts the assumption of Case B. Similarly, the case when $\Sigma _2$ and $\Sigma_3$ are present is impossible. Thus, the present contiguity subdiagrams to blue sides must be $\Sigma_1$ and $\Sigma_3$. Arguing as in Case A.2, we obtain $\d_i(o, r_2\cup r_3)\le 6.5 Ni$.

Thus, in all cases, we have (\ref{doxyz}). This implies that $\Cay(G_i, X)$ is hyperbolic with the hyperbolicity constant at most $6.5Ni+1/2< 7Ni$.
\end{proof}

\begin{rem}
A result similar to Proposition \ref{33ni} was proved in \cite[Proposition 6.2 (e)]{OOS} for groups given by a slightly different presentation. The groups considered there have unbounded torsion, which causes the hyperbolicity constant to grow superlinearly in $i$. In fact, the proof given in \cite{OOS} can be modified in order to derive Proposition \ref{33ni}. However, we choose to present a slightly longer yet, in a sense, easier proof. The difference is that the argument given in \cite{OOS} makes use of somewhat more advanced technology based on \emph{$B$-} and \emph{$C$-maps} from \cite[Chapter 7]{book}. In contrast, our approach only uses results about more elementary $A$-maps introduced in \cite[Chapter 5]{book}. This allows us to make our exposition self-contained modulo the input from \cite{book} discussed in the previous section. 
\end{rem}

\begin{thm}\label{Thm:Bmn}
For any $r\ge 1$ and any sufficiently large odd $N$, the free Burnside group $B(r,N)$ is well-approximated by hyperbolic groups. In  particular, $f_{B(r,N)}\sim n/m$.
\end{thm}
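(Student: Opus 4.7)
The plan is to show that $B(r,N)$ is well-approximated by hyperbolic groups in the sense of Definition \ref{Def:wah}, using the presentation (\ref{B}) of $B(r,N)$ and the approximating groups $G_i$ from (\ref{Gi}). Once this is done, the conclusion $f_{B(r,N)}\sim n/m$ will follow immediately from Proposition \ref{Prop:LimHyp}(b). Condition (b) of Definition \ref{Def:wah} is handed to us by Proposition \ref{33ni}: since $Cay(G_i,X)$ is $32Ni$-hyperbolic, we may take $C=32N$, and in particular every $G_i$ is hyperbolic.

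The remaining task is condition (a): I need to show that $\ll\mathcal R_i\rr$ contains every word $w\in F(X)$ of length $\|w\|\le i$ representing $1$ in $B(r,N)$. To prove this, I would fix such a $w$ and pass to a reduced van Kampen diagram $\Delta$ over (\ref{B}) with $\Lab(\partial\Delta)\equiv w$; such a diagram exists by the reduction procedure recalled in Section \ref{Sec:O}. Each $\mathcal R$-face $\Pi$ of $\Delta$ is labeled by $P^N$ for some period $P\in\mathcal P_j$, so $\ell(\partial\Pi)=Nj$. Lemma \ref{beta} then gives
$$Nj = \ell(\partial\Pi) < \frac{\|w\|}{1-\beta} \le \frac{i}{1-\beta},$$
whence $j<i/(N(1-\beta))$. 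Since $\beta<1/2$ by (\ref{para}) and $N\ge 2$, we have $N(1-\beta)>1$, so $j<i$. Therefore every $\mathcal R$-face of $\Delta$ is labeled by an element of $\mathcal R_i$; the van Kampen lemma then realizes $w$ as a product of $F(X)$-conjugates of these labels (the $0$-faces of rank $0$ contribute trivially, since their labels represent $1$ in $F(X)$), and so $w\in\ll\mathcal R_i\rr$.

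The main difficulty has already been packaged into Proposition \ref{33ni}, whose proof relies on the full strength of Olshanskii's graded small cancellation theory from \cite{book}; once that hyperbolicity estimate is in hand, the verification of Definition \ref{Def:wah} reduces to the one-line perimeter estimate above enabled by Lemma \ref{beta}. The degenerate case $r=1$, where $B(1,N)=\ZZ/N\ZZ$ is finite, can be handled separately since finite groups are trivially well-approximated by hyperbolic groups (one may take all $G_i$ equal to the group itself).
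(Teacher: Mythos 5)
Your proposal is correct and follows essentially the same route as the paper: verify Definition \ref{Def:wah} for the Olshanskii presentation, using Lemma \ref{beta} to bound the ranks of faces in a reduced diagram for condition (a) and Proposition \ref{33ni} for condition (b), then conclude via Proposition \ref{Prop:LimHyp}. The only (harmless) difference is that you exploit the factor $N$ in $\ell(\partial\Pi)=Nj$ to keep the approximating sets $\mathcal R_i$ unchanged, whereas the paper reindexes to $\mathcal R_{\lceil i/(1-\beta)\rceil}$ and settles for the bound $64Ni$ on the hyperbolicity constant; your separate treatment of the degenerate case $r=1$ is also fine.
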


\begin{proof}
For every $i\in \NN$, let $j(i)=\lceil i/(1-\beta)\rceil$. We define $\mathcal R^\prime _i= \mathcal R_{j(i)}$ and $G^\prime _i=\langle X\mid \mathcal R^\prime _i\rangle$. By Lemma \ref{beta}, every word $w\in F(X)$ of length $\| w\|\le i$ representing $1$ in $B(r,N)$ belongs to $\ll \mathcal R^\prime_i\rr$. Further, by Proposition \ref{33ni}, $\Cay(G_i^\prime, X)$ is hyperbolic, and its hyperbolicity constant is bounded from above by $7N\lceil i/(1-\beta)\rceil \le 14Ni$ (LPP). Thus, the sequence of presentations  $G^\prime _i$ satisfies Definition \ref{Def:wah} and we obtain the first claim of the theorem. The second claim follows from Proposition \ref{Prop:LimHyp}.
\end{proof}


\subsection{Non-quasi-isometric finitely generated groups of finite exponent}\label{FEQI}


In this section, we prove Theorem \ref{Thm:FE} by applying Proposition \ref{Prop:Cext} to the central extension of the free Burnside group $B(r,N)$ defined as follows. Let $X$ and $\mathcal R$ be as in (\ref{B}) and let $K$ denote the normal closure of $\mathcal R$ in $F(X)$. Fix any integer $p\ge 2$ and consider the group
\begin{equation}\label{Cmnp}
C(r,N,p) =F(X)/K^p[K,F(X)]=\langle X\mid \,R^p=1,\, [R,x]=1,\, R\in \mathcal R,\, x\in X\rangle.
\end{equation}
The lemma below summarizes some properties of $C(r,N,p)$ obtained in \cite{book}.

\begin{lem}[Olshanskii]\label{Cext}
For any integers $r,p\ge 2$ and any sufficiently large odd $N\in \NN$, the center of $C(r,N,p)$ is isomorphic to the direct sum of countably many copies of $\ZZ_p$ and we have $C(r,N,p)/Z(C(r,N,p))\cong B(r,N)$. In particular, $C(r,N,p)$ has exponent $Np$.
\end{lem}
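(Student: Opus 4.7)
The plan is to analyze the central kernel of the natural surjection $C(r,N,p)\to B(r,N)$, determine its abelian structure, and deduce the exponent. Let $K=\ll \mathcal R\rr\trianglelefteq F(X)$, so that $F(X)/K\cong B(r,N)$.

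First, I would verify that the normal closure in $F(X)$ of the defining relators of (\ref{Cmnp}) equals $K^p[K,F(X)]$. The inclusion into $K^p[K,F(X)]$ is immediate, since this subgroup is normal in $F(X)$ and contains each $R^p$ and $[R,x]$. Conversely, the commutator relations force $[K,F(X)]$ to lie in the normal closure, because each $R\in \mathcal R$ becomes central modulo that closure and $\mathcal R$ normally generates $K$; so $K$ becomes abelian modulo the normal closure, and the $p$-th power relations then force $K^p$ to lie there as well. Hence $C(r,N,p)/Z\cong F(X)/K\cong B(r,N)$ with $Z:=K/K^p[K,F(X)]$ central by construction. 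To upgrade ``central'' to ``the whole center'', I would invoke the centerlessness of $B(r,N)$ for $r\geq 2$ and sufficiently large odd $N$; this is a standard consequence of Olshanskii's description of centralizers in $B(r,N)$ as finite cyclic groups of order $N$ (see \cite{book}).

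Next, I would describe $Z$ as an abelian group. Since $K/[K,F(X)]$ is abelian, $Z$ is the quotient $(K/[K,F(X)])/p(K/[K,F(X)])$, and hence an elementary abelian $p$-group. It is at most countable because $K$ has countable rank. The non-trivial part is to show that $Z$ is infinite, and here I would appeal to the graded small cancellation machinery from \cite{book} reviewed in Section~\ref{Sec:O}: using reduction to reduced diagrams and the contiguity estimates of Lemmas~\ref{cont} and \ref{gamma}, one shows that for any finite collection of distinct periods $P_1,\ldots,P_s\in \bigcup_i \mathcal P_i$ and any exponents $\alpha_1,\ldots,\alpha_s\in\{0,1,\ldots,p-1\}$ not all zero, the element $\prod_{j=1}^s (P_j^N)^{\alpha_j}$ represents a non-trivial class in $Z$. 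Since $\bigcup_i \mathcal P_i$ is countably infinite for $r\geq 2$, this yields $Z\cong\bigoplus_{i=1}^\infty\ZZ_p$.

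Finally, for the exponent: for any $g\in C(r,N,p)$, the image of $g$ in $B(r,N)$ has order dividing $N$, so $g^N\in Z$; since $Z$ has exponent $p$, we obtain $g^{Np}=1$, and the exponent of $C(r,N,p)$ divides $Np$. Conversely, each generator $x\in X$ may be taken to be a period of length $1$, so $x^N\in \mathcal R$, and by the independence assertion above its image is a non-trivial element of $Z$ of order exactly $p$; hence $x$ has order exactly $Np$ in $C(r,N,p)$. The main obstacle is the independence assertion of the second paragraph, which is precisely the point where graded small cancellation theory is essential; all other steps are formal consequences of the presentation and established properties of $B(r,N)$.
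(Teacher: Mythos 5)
Your proposal follows essentially the same route as the paper: identify the central subgroup $Z=K/K^p[K,F(X)]$, observe that $C(r,N,p)/Z\cong B(r,N)$, upgrade $Z$ to the full center via centerlessness of $B(r,N)$ (the paper cites \cite[Theorem 19.5]{book}), and deduce the exponent. The one substantive difference is how the structure of $Z$ is established. What you call ``the main obstacle'' --- that the classes of the relators $P^N$, $P\in\mathcal P$, are linearly independent in $K/K^p[K,F(X)]$ --- is precisely the content of \cite[Corollary 31.1]{book}, which asserts that the relation module $K/[F(X),K]$ is free abelian with basis $\{\overline R\mid R\in\mathcal R\}$; the paper simply cites this (together with \cite[Theorem 19.3]{book} for infiniteness of $\mathcal R$) and is done. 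Your plan to re-derive the independence from reduced diagrams and the contiguity estimates of Lemmas~\ref{cont} and \ref{gamma} is only a gesture at a proof: freeness of the relation module is a genuine theorem in \cite{book} whose proof goes well beyond those two lemmas, so as written this step is a gap unless you replace it with the citation. With that citation in place, the rest of your argument (including the explicit verification that each generator has order exactly $Np$, which the paper leaves implicit) is correct.
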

\begin{proof}
By \cite[Corollary 31.1]{book}, the subgroup $K/[F(X),K]$ of $F(X)/[F(X),K]$ is free abelian with basis $\overline{\mathcal R} =\{ \overline {R}\mid R\in \mathcal R\}$, where $\overline{R}=R[F(X), K]$. The set $\mathcal R$ is infinite by \cite[Theorem 19.3]{book}. Therefore, the central subgroup $Z=K/K^p[F(X),K]\le C(r,N,p)$ is isomorphic to the direct sum of countably many copies of $\ZZ_p$ generated by the images of elements $\overline{R}\in \overline{\mathcal R}$. We have $C(r,N,p)/Z\cong B(r,N)$. By \cite[Theorem 19.5]{book}, the group $B(r,N)$ is centerless. Hence,  $Z(C(r,N,p))=Z$.
\end{proof}

Proposition \ref{Prop:Cext} and Lemma \ref{Cext} reduce the proof of Theorem \ref{Thm:FE} to the following.

\begin{prop}\label{mainL}
For any integers $r\ge 4$, $p\ge 2$, and any sufficiently large odd $N\in \NN$, we have $f_{B(r,N)}\not\sim f_{C(r,N,p)}$.
\end{prop}

We will prove this result by showing that the Dehn spectrum of $C(r,N,p)$ is not linear. The assumption $r\ge 4$ is not really essential and is added in order to simplify some technical arguments in the proof; a slightly longer argument would work for any $r\ge 2$.

We begin with auxiliary results. Our first goal is to show that the set of relations in Olshanskii's presentation (\ref{B}) grows exponentially. More precisely, we say that a set $\mathcal W$ of words in some alphabet is \emph{exponential} if there exists $c\in \NN$ such that for all $i\in \NN$, we have
$$
|\{ W\in \mathcal W \mid \| W\| \le ci\} | \ge 2^i.
$$

\begin{lem}[Olshanskii, {\cite[Lemma 1.2]{Ols16}}]\label{C*}
For any $\lambda >0$, there exists an exponential set $\mathcal W$ of words in a $2$-letter alphabet such that the following conditions hold.
\begin{enumerate}
\item[(a)] Every word $W\in \mathcal W$ is $7$-aperiodic; that is, $W$ contains no subwords of the form $U^7$, where $U$ is any non-empty word.
\item[(b)] If $V$ is a common subword of some words $U, W \in \mathcal W$ and $\| V\| \ge \lambda \| W\|$, then $U\equiv W$.
\end{enumerate}
\end{lem}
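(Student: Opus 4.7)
The plan is to combine the exponential abundance of $7$-aperiodic binary words with a greedy counting argument that controls long common subwords. The construction proceeds in three phases.

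\emph{First phase: abundance.} By a classical result of Thue (overlap-freeness) extended to $k$-power-free binary words for every $k\ge 3$, the number $N(n)$ of $7$-aperiodic words of length $n$ over a two-letter alphabet grows exponentially. By Fekete's lemma applied to a suitable supermultiplicative proxy, the limit $\mu := \lim_{n\to\infty} N(n)^{1/n}$ exists and is strictly greater than $1$; in particular $\mu^n/P(n)\le N(n)\le \mu^n$ for some polynomial $P$ and all $n$ large enough. A subword of a $7$-aperiodic word is itself $7$-aperiodic, so for any word $V$ of length $\ell$ the number of $7$-aperiodic words of length $n$ containing $V$ at a fixed position is at most $\mu^{n-\ell}$.

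\emph{Second phase: greedy selection within a single length.} Fix $n$ large and $\ell := \lceil\lambda n\rceil$. For any $7$-aperiodic word $W$ of length $n$, the number of $7$-aperiodic words of length $n$ sharing some common subword of length at least $\ell$ with $W$ is bounded by $n^2\,\mu^{n-\ell}$: choose the starting position of the shared subword inside $W$ and inside the candidate ($n$ choices each), then bound the aperiodic completions. Since $\mu^{\lambda n}/P(n)^2$ grows exponentially, a greedy selection within the pool of $N(n)$ candidate words produces a subset $\mathcal W^{(n)}$ of size at least $\mu^{\lambda n}/P(n)^3 \ge 2^{\lfloor n/c_0\rfloor}$ for a constant $c_0 = c_0(\lambda) > 0$ and all $n$ large enough; by construction any two distinct elements of $\mathcal W^{(n)}$ share no common subword of length $\ge \lambda n$.

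\emph{Third phase: gluing lengths together.} Set $n_j := c_0 j$ for $j\in\NN$ and run the greedy construction inductively over $j$. When choosing $\mathcal W^{(n_j)}$, forbid (on top of the intra-length conflicts of Phase 2) any word of length $n_j$ that shares a subword of length $\ge \lambda n_i$ with some already-selected element of $\mathcal W^{(n_i)}$ for $i<j$. The number of cross-length forbidden candidates is at most
$$
\sum_{i<j} |\mathcal W^{(n_i)}|\, n_i n_j\, \mu^{n_j - \lambda n_i} \,P(n_j) \;\le\; \sum_{i<j} \mu^{n_i + n_j - \lambda n_i}\cdot\mathrm{poly}(n_j),
$$
and this is dominated by the exponential budget $\mu^{n_j}/P(n_j)$ of available $7$-aperiodic words once $c_0$ is taken large enough (the key point is that $n_i + n_j - \lambda n_i = n_j - (1-n_i/n_j)\lambda n_i + n_i$ can be made smaller than $n_j$ by a fixed exponential gap whenever $c_0\lambda$ is suitably large). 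The resulting set $\mathcal W := \bigcup_j \mathcal W^{(n_j)}$ consists entirely of $7$-aperiodic words, any two distinct elements $U,W$ of $\mathcal W$ share no common subword of length $\ge \lambda\min(\|U\|,\|W\|)$, and $|\{W\in\mathcal W : \|W\|\le c_0 j\}| \ge |\mathcal W^{(n_j)}|\ge 2^j$, verifying exponentiality with $c = c_0$.

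The main obstacle is purely bookkeeping: one must absorb into $c_0(\lambda)$ the polynomial overhead from (i) positions of shared subwords, (ii) the discrepancy between $N(n)$ and $\mu^n$, and (iii) accumulation of cross-length constraints in Phase 3. Since each of these contributions is only polynomial in $n_j$ while the available supply $\mu^{n_j}$ and the tolerated loss $\mu^{\lambda n_j}$ are exponential, choosing $c_0$ sufficiently large in terms of $\lambda$ and $\mu$ makes everything go through, and both conditions (a) and (b) hold by the design of the greedy exclusions.
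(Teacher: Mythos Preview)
The paper does not prove this lemma; it is quoted from \cite{Ols16} without argument, so there is no in-paper proof to compare against. Olshanskii's construction there is explicit rather than probabilistic.

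Your greedy approach has a genuine gap at the central estimate. Submultiplicativity of $N(n)$ (which is what ``subwords of $7$-aperiodic words are $7$-aperiodic'' actually gives) together with Fekete yields $N(n)\ge\mu^n$, the \emph{opposite} of the inequality $N(n)\le\mu^n$ you claim; there is no Fekete-style reason for an upper bound $N(n)\le P(n)\mu^n$. Your Phase~2 bound ``at most $\mu^{n-\ell}$ words contain $V$ at a fixed position'' therefore has no justification: from $W=AVB$ you only deduce that $A,B$ are $7$-aperiodic, hence a bound $N(|A|)\,N(|B|)$, for which you possess only \emph{lower} estimates. This is not cosmetic: the trivial replacement $2^{n-\ell}$ kills the argument for small $\lambda$ (it would require $\mu>2^{1-\lambda}$), while the legitimate bound $(\mu+\varepsilon)^{n-\ell}$ coming from $N(k)^{1/k}\to\mu$ rescues Phase~2 but leaves Phase~3 with a stray factor $((\mu+\varepsilon)/\mu)^{n_j}$ on the wrong side. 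Your displayed Phase~3 inequality is also simply false as written: $n_i+n_j-\lambda n_i=n_j+(1-\lambda)n_i>n_j$, so the exponent never drops below $n_j$ regardless of $c_0$. To push a counting argument through across lengths you would need a genuine upper bound $N(k)\le C\mu^k$ --- true for power-free languages but a nontrivial theorem in its own right, not a consequence of Fekete --- or else redesign the scheme so that the short words chosen early do not impose forbidden patterns of bounded length on all later levels.
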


Recall that the sets of periods $\mathcal P_i$ were defined in the process of constructing the presentation (\ref{B}) in Section \ref{Sec:O}. Let
$$
\mathcal P=\bigcup_{i=1}^\infty \mathcal P_i
$$
The proof of the next lemma is inspired by the proof of \cite[Theorem 19.3]{book}.

\begin{lem}\label{exp}
For every $r\ge 2$ and every sufficiently large odd $N\in \NN$, the set $\mathcal P$ is exponential.
\end{lem}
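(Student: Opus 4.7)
The plan is to use Lemma~\ref{C*} as a source of exponentially many candidate periods. Fix $\lambda$ to be small compared to the small cancellation parameters (any $\lambda$ below the smallest parameter in (\ref{para}) will do), and let $\mathcal W$ be the exponential set of $7$-aperiodic words over a two-letter alphabet produced by Lemma~\ref{C*}. Identify that alphabet with $\{x_1, x_2\} \subseteq X$ (possible since $r\ge 2$); because the letters of $W$ all have positive exponent, $W$ is automatically cyclically reduced in $F(X)$. I will define a map $\mathcal W \to \mathcal P$ sending $W\in \mathcal W$ with $\|W\|=i$ to some $P \in \mathcal P_i$ to which $W^{\pm 1}$ is conjugate in $G_{i-1}$, and show that (i) such a $P$ always exists, and (ii) the map is at most $2i$-to-one. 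Since $|\mathcal W \cap \{W : \|W\| \le ci\}| \ge 2^i$ for some constant $c$, this will yield the exponential bound.

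For (i), by the maximality of $\mathcal P_i$ it suffices to verify that $W$ is not conjugate in $G_{i-1}$ to $V^k$ for any $V$ with $\|V\| < i$ and any $k\ge 1$. Assume the contrary and take a reduced van Kampen diagram $\Delta$ over (\ref{Gi}) with boundary label $W t V^{-k} t^{-1}$, with $t$ chosen to minimize the number of faces (in particular each side is geodesic in $\Delta$). Free conjugacy of $W$ and $V^k$ is ruled out because it would make $W$ a non-trivial power of a shorter word, contradicting $7$-aperiodicity. Hence $\Delta$ contains an $\mathcal R$-face $\Pi$ labelled by $P^N$ for some $P\in \mathcal P_j$, $j\le i-1$. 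Applying Lemma~\ref{gamma} to the four-section boundary decomposition, Corollary~\ref{geod} rules out any contiguity subdiagram of degree $\ge 1/2+\alpha$ to the four geodesic sides, so the total contiguity of $\Pi$ to the two short sides (labelled by $t^{\pm 1}$, of length $\le$ the diameter bound coming from minimality) is small, and a single contiguity subdiagram $\Gamma$ attaches $\Pi$ to the $W$-side or the $V^k$-side with degree $>(1-\gamma)/2>\varepsilon$. By Lemma~\ref{cont}(a)--(b), this transfers a subword of $P^N$ of length at least $(1/2-\gamma)(1-2\beta)\|P^N\|$ into $W$ or into $V^k$, which, since $N$ is large, contains $P^7$ as a subword --- impossible in $W$ by $7$-aperiodicity, and impossible in $V^k$ unless $\|V\|$ is comparable to $\|P\|$, in which case an easy cyclic-shift argument shows $V$ is conjugate to a power of $P$ in $G_{j-1}$, reducing to the next lower rank; the argument then closes by induction on $i$.

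For (ii), suppose $W_1,W_2\in \mathcal W$ of length $i$ are both assigned to $P$, so that $W_1$ is conjugate in $G_{i-1}$ to $W_2^{\pm 1}$. The same diagrammatic analysis applied to a reduced diagram $\Delta'$ with this boundary shows that any $\mathcal R$-face in $\Delta'$ would again inject a long $P'$-periodic subword (for some $P'\in \mathcal P$) into one of $W_1, W_2$, contradicting $7$-aperiodicity as soon as $\|P'\|\ge 1$. Therefore $\Delta'$ has no $\mathcal R$-faces, so $W_1$ is conjugate to $W_2^{\pm 1}$ in the free group $F(X)$; since both words are cyclically reduced, $W_2^{\pm 1}$ is a cyclic shift of $W_1$, giving at most $2i$ possibilities. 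Combining (i) and (ii), each period of length $i$ has at most $2i$ preimages under our map, so
$$
|\{P\in\mathcal P : \|P\|\le ci\}| \;\ge\; \frac{|\{W\in\mathcal W : \|W\|\le ci\}|}{2ci} \;\ge\; \frac{2^i}{2ci},
$$
which is exponential (with a slightly worse constant).

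The main obstacle will be the quantitative bookkeeping in (i): precisely extracting a $P^7$ subword from the contiguity estimate, and dispatching the degenerate case in which the offending period $P$ is much shorter than $i$ so that it has "room" to fit inside $V^k$ without directly contradicting $7$-aperiodicity of $W$. This is where the small common subword property (b) of Lemma~\ref{C*}, together with the inductive structure of the ranks $G_j$, must be invoked carefully; in particular $\lambda$ has to be chosen small enough relative to $\gamma$ and $\beta$ so that every long periodic subword forced into a $W_j$ by the diagrammatic analysis genuinely violates properties (a) or (b) of $\mathcal W$.
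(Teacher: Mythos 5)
Your strategy is genuinely different from the paper's: the paper attaches to each $W\in\mathcal W$ a period $A_W$ by citing \cite[Theorem 16.2]{book}, which supplies a \emph{literal} common subword of cyclic shifts of $W^N$ and $A_W^N$ of length at least $\varepsilon N\|A_W\|$, and then gets injectivity of $W\mapsto A_W$ from property (b) of Lemma~\ref{C*} with $\lambda=1/40$. You instead try to show that each $W$ of length $i$ is itself conjugate in $G_{i-1}$ to a period of rank exactly $i$, and to prove this from scratch with contiguity diagrams. That is where the proposal has real gaps, not just bookkeeping to be done.

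Concretely: (1) Your key step (i) --- that no $W\in\mathcal W$ is conjugate in $G_{i-1}$ to a power of a shorter word --- is not established, because the argument conflates equality in a lower-rank group with literal subword containment. A contiguity subdiagram $\Gamma$ of an $\mathcal R$-face $\Pi$ to the $W$-side only says that a subword of $W$ is equal \emph{modulo the lower-rank diagram $\Gamma$} to a word whose length is comparable (via Lemma~\ref{cont}(b)) to a long piece of $P^N$; it does not make $P^7$ a literal subword of $W$, so $7$-aperiodicity of $W$ cannot be invoked at that point. Extracting genuine periodic subwords from contiguity is exactly the content of the $A$-periodic-word machinery in \cite{book} (and of Theorem 16.2), which is why the paper uses it as a black box. (2) The contiguity bookkeeping also fails: choosing $t$ to minimize the number of faces does not make the sides labelled $W$ and $V^{-k}$ geodesic in $\Delta$, and even for geodesic sides Corollary~\ref{geod} only gives the bound $1/2+\alpha$ per side; in the four-section decomposition $WtV^{-k}t^{-1}$, two contiguity subdiagrams to the two $t$-sides can already contribute nearly $1+2\alpha>1-\gamma$, so Lemma~\ref{gamma} forces no contiguity at all to the $W$- or $V$-sides and no contradiction arises. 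Ruling out that configuration needs extra geometric control of the quadrilateral (compare the conditions ($\ast$), ($\ast\ast$) and Lemma~\ref{corner} in the proof of Proposition~\ref{33ni}), which your setup does not provide; nothing bounds $\ell(t)$ either. The same literal-subword issue undermines step (ii) (the $2i$-to-one claim). Unless you import Theorem 16.2 or the periodic-word lemmas --- at which point you essentially reproduce the paper's shorter argument --- the proof does not close.
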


\begin{proof}
Let $\mathcal W$ be an exponential set of words in the alphabet $X$ satisfying conditions (a) and (b) of Lemma \ref{C*} for $\lambda = 1/40$. For every $W\in \mathcal W$, we have $W^N=1$ in $B(r,N)$. By \cite[Theorem 16.2]{book}, there exists a period $A\in \mathcal P$ such that a cyclic shift of the word $W^N$ and a cyclic shift of $A^N$ have a common subword of length at least $\e N \| A\| >60 \| A\|$ (LPP). We fix one such period for each $W\in \mathcal W$ and denote it by $A_W$. Getting rid of cyclic shifts, we obtain that $A_W^{29}$ is a subword of $W^N$.

Suppose that $A_U\equiv A_W$ for some words $U,W\in \mathcal W$. Without loss of generality, we can assume that $\| W\|\le \|U\|$. If $\| A_W\| \le\|W\|/10$, then $A^{29}$ is a subword of $W^4$, which contradicts the assumption that $W$ is $7$-aperiodic. Therefore, $\| A_W\| >\|W\|/10$.
Since $A_W$ is a subword of $W^N$, the words $A_W$ and $W$ have a common subword $B$ of length at least $\|A_W\|/2> \|W\|/20$. Note that $B$ is also a subword of $U^N$ as $A_U\equiv A_W$. Since $\| B\|\le \| W\|\le \| U\|$, the words $B$ and $U$ share a common subword $C$ of length at least $\| B\|/2>\| W\|/40$. Note that $C$ is also a common subword of $U$ and $W$, and hence we have $U\equiv W$ by condition (b) of Lemma \ref{C*}. Thus, the map $W\mapsto A_W$ is injective. The inequality $\| A_W\| \le N\|W\|$ implies that the subset $\{ A_W\mid W\in \mathcal W\}\subseteq\mathcal P$ is exponential.
\end{proof}

Recall that $E(\Delta)$ denotes the set of edges of a van Kampen diagram $\Delta$. We will also need the following result obtained by Grigorchuk and Ivanov in the course of proving Theorem~1.10 in \cite{GI} (it is stated explicitly in the third paragraph of the proof).

\begin{lem}[{\cite{GI}}]\label{GI}
For every reduced van Kampen diagram $\Delta$ over (\ref{B}), we have $|E(\Delta)|\le \ell(\partial\Delta)^{19/12}$.
\end{lem}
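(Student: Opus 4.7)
The plan is to prove the bound by induction on the complexity of $\Delta$, measured by the number of $\mathcal{R}$-faces. The base case, where $\Delta$ contains no $\mathcal{R}$-faces, is trivial since $|E(\Delta)| = \ell(\partial \Delta) \le n^{19/12}$ for all $n \ge 1$. For the inductive step, I would invoke Lemma \ref{gamma} with $\ell = 1$, treating $\partial\Delta$ as a single side. This produces an $\mathcal{R}$-face $\Pi$ together with a contiguity subdiagram $\Gamma$ of $\Pi$ to $\partial\Delta$ satisfying $(\Pi, \Gamma, \partial\Delta) > 1 - \gamma$. Set $t = |\partial \Pi|$; by Lemma \ref{beta} we have $t \le n/(1-\beta)$.

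The strategy is then to excise $\Pi \cup \Gamma$ from $\Delta$, producing a reduced diagram $\Delta_1$ whose boundary length $n_1$ is strictly smaller than $n$. Lemmas \ref{beta} and \ref{cont} give quantitative control over this excision: the boundary shrinks by essentially the contiguity section on $\partial\Delta$ (of length at least $(1-\gamma)t/(1+2\beta)$) while only gaining the complementary arc on $\partial\Pi$ (of length at most $\gamma t$) plus two side sections (each of length at most $\zeta t$). By the lowest parameter principle, this yields $n_1 \le n - c_1 t$ for an explicit constant $c_1 > 0$. On the edge side,
$$
|E(\Delta)| \;\le\; |E(\Delta_1)| + |E(\Gamma)| + t,
$$
the last term accounting for the edges of $\partial\Pi$. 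Applying the inductive hypothesis to both $\Delta_1$ and $\Gamma$ (noting that $\ell(\partial \Gamma) = O(t)$), the inductive step reduces to verifying a recursive inequality of the shape
$$
n^{19/12} \;\ge\; t + C\, t^{19/12} + (n - c_1 t)^{19/12}
$$
uniformly in $t \in [N, n/(1-\beta)]$.

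The main obstacle is pinning down the precise exponent $19/12$. It should arise as the smallest $\alpha$ for which the above inequality can be closed: for small $t$, the linearization $(n - c_1 t)^\alpha \approx n^\alpha - \alpha c_1 t n^{\alpha - 1}$ must absorb the linear term $t$, which forces $\alpha \ge 1 + \varepsilon$; for $t$ of order $n$, the term $C t^\alpha$ must fit within $(1 - (1-c_1)^\alpha) n^\alpha$, which imposes an upper pressure on $\alpha$. Balancing these two regimes against the explicit constants furnished by Lemmas \ref{beta}--\ref{gamma} is where the particular value $19/12$ should emerge. The main technical burden is bookkeeping the small-cancellation constants through this optimization, exactly as in the argument of Grigorchuk--Ivanov in \cite{GI}.
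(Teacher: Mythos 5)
There is no internal proof in the paper to compare with: Lemma \ref{GI} is imported verbatim from Grigorchuk--Ivanov \cite{GI} (it is extracted from the proof of their Theorem 1.10), so your sketch has to stand on its own, and as it stands the inductive step fails. The problem is not ``bookkeeping of constants'' but the shape of the recursion itself. The contiguity subdiagram $\Gamma$ produced by Lemma \ref{gamma} has perimeter at least $\ell(p_1)+\ell(p_2) > (1-\gamma)t + (1-\gamma)t/(1+2\beta)$, i.e.\ essentially $2t$, so when you apply the inductive hypothesis to $\Gamma$ the best you can write is $|E(\Gamma)|\le (c_2 t)^{19/12}$ with $c_2\approx 2$, i.e.\ $C\approx 2^{19/12}>1$ in your displayed inequality. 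Lemma \ref{beta} only gives $t\le n/(1-\beta)$, so $t$ may be a definite fraction of $n$ (even slightly larger than $n$); already for $t\ge n/2$ the single term $C\,t^{19/12}$ exceeds $n^{19/12}$, so the inequality $n^{19/12}\ge t+C\,t^{19/12}+(n-c_1t)^{19/12}$ is false in that regime. Moreover this failure is independent of the exponent: for any $\alpha>1$ the function $x\mapsto x^\alpha$ is superadditive, and your excision trades a boundary reduction of order $t$ for a new piece $\Gamma$ of perimeter of order $2t$, so no choice of $\alpha$ closes the recursion when $t$ is comparable to $n$. In other words, the exponent $19/12$ cannot ``emerge'' from this optimization, because the optimization has no solution; a correct argument must avoid feeding a contiguity subdiagram of perimeter $\sim 2t$ back into the inductive hypothesis (for instance, by first bounding the number of $\mathcal R$-faces of $\Delta$ in terms of $\ell(\partial\Delta)$ and then counting edges via the perimeter bound $\ell(\partial\Pi)\le \ell(\partial\Delta)/(1-\beta)$ and Euler's formula, rather than by a one-face-at-a-time excision).

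Two secondary points. First, your claim $\ell(\partial\Gamma)=O(t)$ needs an upper bound on $\ell(p_2)$ in terms of $\ell(p_1)$; Lemma \ref{cont}(b) as quoted in the paper gives only the opposite comparison $\ell(p_1)<(1+2\beta)\ell(p_2)$, so you would have to invoke the full two-sided statement from Olshanskii's book rather than the lemmas listed here. Second, the base case is not quite ``trivial'': diagrams over (\ref{B}) are allowed to carry $0$-edges and $0$-faces, so $|E(\Delta)|=\ell(\partial\Delta)$ does not hold automatically for diagrams without $\mathcal R$-faces, and some convention (counting only edges with nontrivial labels, or minimality of the diagram) has to be fixed before the induction even starts.
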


In the proof of Proposition \ref{mainL}, we will borrow some ideas from Chapter 10 of \cite{book} and Section 5.3 of \cite{OOS}. In particular, we will make use of the following invariant. Let $A\in \mathcal P$ be a period. For a diagram $\Delta$ over (\ref{B}), we let $$\sigma _A(\Delta)= \sigma_A^+(\Delta) - \sigma_A^-(\Delta),$$ where $\sigma_A^+(\Delta)$ (respectively $\sigma_A^-(\Delta)$) is the number of faces whose boundary label reads $A^N$ (respectively,  $A^{-N}$) in the counterclockwise direction starting from an appropriate vertex.

Every diagram $\Sigma $ over (\ref{Cmnp}) can be converted to a diagram $\Delta $ over (\ref{B}) with the same boundary label by using the following transformations  (see Fig. \ref{fig1}).

\begin{figure}
  \begin{center}
  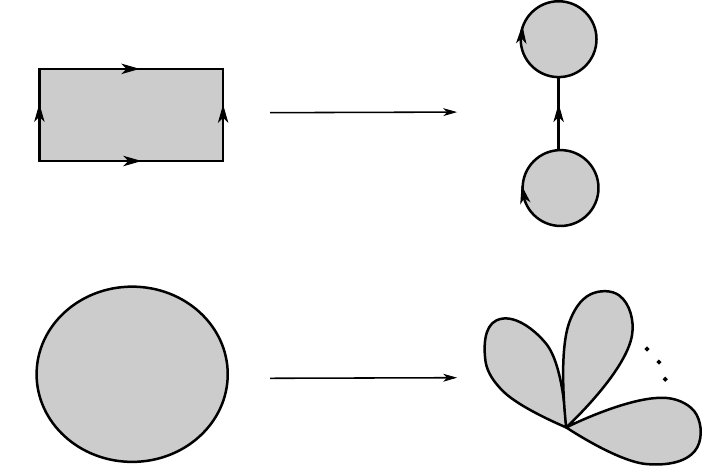
  \end{center}
  \vspace{-3mm}
  \caption{Transformations (T$_1$) and (T$_2$).}\label{fig1}
\end{figure}

\begin{enumerate}
\item[(T$_1$)] Every face with boundary label $[R,x]$ for some $R\in \mathcal R$ and $x\in X$ is replaced with two faces with boundary labels $R$ and $R^{-1}$ (read in the same direction, e.g., clockwise) connected by an edge labelled by $x$.
\item[(T$_2$)] Every face $\Pi$ with boundary label $R^p$ for some $R\in \mathcal R$ is replaced with the wedge of $p$ faces with boundary labels  $R$ (read in the same direction as $\Lab (\partial \Pi)$).
\end{enumerate}

We record an immediate observation.

\begin{lem}\label{conv}
Let $\Sigma $ be any diagram over (\ref{Cmnp}),  $\Delta $ the diagram over (\ref{B}) obtained from $\Sigma$ by transformations (T$_1$) and (T$_2$). For every period $A\in \mathcal P$, we have $\sigma _A(\Delta )\equiv 0 \; ({\rm mod}\, p)$.
\end{lem}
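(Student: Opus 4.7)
My plan is to verify the divisibility face-by-face, tracking the net effect of the transformations (T$_1$) and (T$_2$) on the signed counter $\sigma_A$. This is essentially a bookkeeping argument, so no serious obstacles are expected; the main care needed is to correctly handle orientations so that no sign error sneaks in.

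First, I will observe that every relator $R \in \mathcal R$ has the unique form $R = P^N$ for some period $P \in \mathcal P$, by the construction recalled in Section \ref{Sec:O}. Consequently, a face $\Pi$ of $\Delta$ contributes to $\sigma_A^\pm(\Delta)$ precisely when its associated period is $A$; any face whose associated period differs from $A$ makes no contribution to $\sigma_A$.

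By the presentation (\ref{Cmnp}), every face of $\Sigma$ has boundary label either $[R, x]$ or $R^p$ for some $R \in \mathcal R$ and $x \in X$. In the first case, transformation (T$_1$) replaces the face by two faces of $\Delta$ whose boundary labels are $R$ and $R^{-1}$; when $R = A^N$ this contributes $+1$ to both $\sigma_A^+$ and $\sigma_A^-$, so the net contribution to $\sigma_A$ is zero, while if $R \neq A^{\pm N}$ the contribution is trivially zero. In the second case, transformation (T$_2$) replaces the face by $p$ faces of $\Delta$ all carrying the same label $R$ (or all the same label $R^{-1}$, depending on which direction we read $\partial \Pi$); when $R = A^N$ this contributes either $+p$ to $\sigma_A^+$ or $+p$ to $\sigma_A^-$, i.e. $\pm p$ to $\sigma_A$, and otherwise $0$. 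Summing these contributions across all faces of $\Sigma$---and noting that any edges labeled by $1$ or $0$-faces of rank $0$ play no role in $\sigma_A$---yields $\sigma_A(\Delta) \equiv 0 \pmod p$, as required.
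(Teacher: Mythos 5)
Your argument is correct and is exactly the bookkeeping the authors have in mind: the paper records this lemma as ``an obvious observation'' with no written proof, and the intended justification is precisely that each (T$_1$)-pair contributes $0$ to $\sigma_A$ while each (T$_2$)-wedge contributes $\pm p$. Your handling of orientations and of the fact that every relator is uniquely of the form $P^N$ is accurate, so nothing is missing.
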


Now, we prove the main technical result of this section.

\begin{lem}\label{qg}
Let $r,p\ge 2$ be any integers and let $N\in \NN$ be a sufficiently large odd number. For any $\ell\in \NN$ and any pairwise distinct periods $A_1, \ldots, A_\ell\in \mathcal P$, we have $|A_1^N\ldots A_\ell^N|_X\ge  (\|A_1^N\ldots A_\ell^N\|/2) ^{12/19}$ in the group $C(r,N,p)$.
\end{lem}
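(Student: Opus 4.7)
The plan is to bound the word-length $L=|W|_X$ in $C(r,N,p)$ from below by analyzing a reduced van Kampen diagram over the Olshanskii presentation (\ref{B}) of $B(r,N)$. The key insight is that the $p$-fold central extension structure of $C(r,N,p)$ forces any representative of $W$ in $B(r,N)$ to involve each relator $A_i^N$ with nonzero algebraic multiplicity, which in turn provides enough $\mathcal R$-faces in the diagram to make it long.

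First I fix a geodesic word $U$ in $X\cup X^{-1}$ with $\|U\|=L$ such that $U=W$ in $C(r,N,p)$. Projecting to $B(r,N)=C(r,N,p)/Z(C(r,N,p))$ and using $W=1$ in $B(r,N)$ shows $U\in K:=\ll\mathcal R\rr$. The main preliminary step is the congruence $[U]\equiv[W]\pmod{p}$ in the free abelian group $K/[F(X),K]$ (which has basis $\overline{\mathcal R}$ by \cite[Corollary~31.1]{book}). Indeed, $UW^{-1}=1$ in $C(r,N,p)=F(X)/K^p[F(X),K]$ implies $UW^{-1}\in K^p[F(X),K]$, whose image in $K/[F(X),K]$ lies in $p\cdot K/[F(X),K]$. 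Equivalently, one may apply the transformations (T$_1$), (T$_2$) to a diagram $\Sigma$ over (\ref{Cmnp}) with $\Lab(\partial\Sigma)\equiv WU^{-1}$ to obtain a diagram $\Delta$ over (\ref{B}) with the same boundary label, and invoke Lemma \ref{conv}. Since $[W]=\sum_{i=1}^{\ell}\overline{A_i^N}$, the coefficient of $\overline{A_i^N}$ in $[U]$ is congruent to $1$ modulo $p$, and hence nonzero.

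Next I take any reduced van Kampen diagram $\Xi$ over (\ref{B}) with $\Lab(\partial\Xi)\equiv U$. For each $i$, the value $\sigma_{A_i}(\Xi)$ equals the coefficient of $\overline{A_i^N}$ in $[U]$, which is nonzero, so $\Xi$ contains at least one face with boundary label $A_i^{\pm N}$; such a face has perimeter $N\|A_i\|$. Lemma \ref{GI} applied to $\Xi$ gives $|E(\Xi)|\le\ell(\partial\Xi)^{19/12}=L^{19/12}$. Since each edge of $\Xi$ borders at most two faces, summing perimeters over the $\ell$ distinguished faces yields
$$
\|W\|\;=\;N\sum_{i=1}^{\ell}\|A_i\|\;\le\;\sum_{\text{faces of }\Xi}(\text{perimeter})\;\le\;2|E(\Xi)|\;\le\;2L^{19/12},
$$
which rearranges to $L\ge(\|W\|/2)^{12/19}$.

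The principal obstacle is the key congruence: translating the algebraic statement \emph{``$U=W$ in $C(r,N,p)$,''} a property of the $p$-fold central extension, into the geometric statement \emph{``$\sigma_{A_i}(\Xi)\neq 0$ for every $i=1,\dots,\ell$,''} a property of any reduced diagram over (\ref{B}) with boundary label $U$. Once this is established, the rest is routine edge-and-face-perimeter bookkeeping powered by Lemma \ref{GI}.
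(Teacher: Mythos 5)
Your proof is correct and follows essentially the same route as the paper: force at least one face labelled $A_i^{\pm N}$ for each $i$ in a reduced diagram over (\ref{B}) whose boundary label is a geodesic representative, and then combine the resulting lower bound on the number of edges with Lemma \ref{GI} to get the exponent $12/19$. The only real difference is in how the mod-$p$ information is transported into the diagram: you verify $\sigma_{A_i}\equiv 1\ ({\rm mod}\ p)$ by computing directly in $K/[F(X),K]$ (free abelian on $\overline{\mathcal R}$, so $U=W$ in $C(r,N,p)$ forces $[U]\equiv[W]\ ({\rm mod}\ p)$), whereas the paper reaches the same congruence by diagram surgery --- converting a diagram over (\ref{Cmnp}) via (T$_1$), (T$_2$), wrapping it around the arc labelled $A_1^N\cdots A_\ell^N$, filling the hole with a wedge of $A_i^N$-faces, and observing that reductions do not change $\sigma_{A_i}$.
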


\begin{proof}
Let $S$ be a shortest word representing the same element as $A_1^N\ldots A_\ell^N$ in $C(r,N,p)$. Let $\Sigma $ be a van Kampen diagram over (\ref{Cmnp}) with the boundary label $A_1^N\ldots A_\ell ^NS^{-1}$. We transform $\Sigma $ to a diagram $\Delta$ over (\ref{B}) by using (T$_1$) and (T$_2$). Further, we ``wrap" $\Delta$ around the part of the boundary labelled by $A_1^N\ldots A_\ell ^N$ and fill the obtained hole with a wedge of faces labelled by $A_1^N$, $\ldots$, $A_\ell^N$ (see Fig. \ref{fig3}). Let $\Omega_0$ denote the obtained diagram. Note that $\Omega_0$ may not be reduced. To be able to apply Lemma \ref{GI}, we reduce $\Omega_0$ and denote by $\Omega$ the resulting reduced diagram; this step does not change the boundary of the diagram and thus $\lab(\partial \Omega )\equiv \lab(\partial \Omega_0)\equiv S^{-1}$.

\begin{figure}
  \begin{center}
\hspace*{3mm} 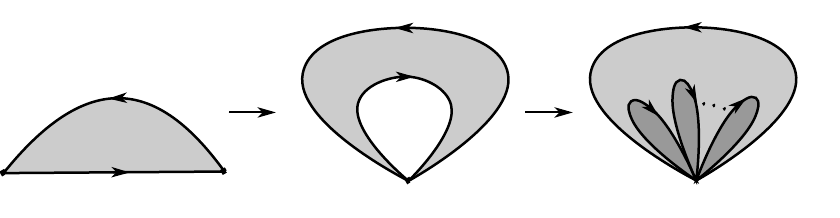
  \end{center}
  \vspace{-3mm}
  \caption{Transforming $\Delta $ to $\Omega _0$.}\label{fig3}
\end{figure}

By Lemma \ref{conv}, we have $\sigma_A (\Delta )=0 \; ({\rm mod}\, p)$ for every period $A$. Since the periods $A_1, \ldots, A_\ell$ are distinct, we have $\sigma_{A_i} (\Omega_0)\equiv 1 \; ({\rm mod}\, p)$ for all $i=1, \ldots, \ell$. The same equality holds for $\Omega$ since the reduction process does not affect the invariant $\sigma _{A_i}$. In particular, the diagram $\Omega $ must contain at least one face labelled by each of the words $A_1^N, \ldots, A_\ell^N$. Therefore, $|E(\Omega)| \ge (\|A_1^N\| +\cdots +\| A_\ell^N\|)/2$. Combining this with Lemma \ref{GI}, we obtain
$$
|A_1^N\ldots A_\ell^N|_X=\| S\| =\ell (\partial\Omega)\ge |E(\Omega)|^{12/19}\ge  (\|A_1^N\ldots A_\ell^N\|/2) ^{12/19}.
$$
\end{proof}

\begin{proof}[Proof of Proposition \ref{mainL}]
Let $X=\{ y_1,y_2,z_1,z_2, ...\}$ be an alphabet of cardinality $r$ and let $B(r,N)$ be the free Burnside group of exponent $N$ with basis $X$. We also denote by $B_Y$ and $B_Z$ the free Burnside groups of exponent $N$ with bases $Y=\{y_1, y_2\}$ and $Z=\{z_1,z_2\}$, respectively.

Since $B(r,N)$ is free in the variety of groups of exponent $N$, there are natural homomorphisms $B(r,N)\to B_Y$ and $B(r,N)\to B_Z$ that send all generators from $X\smallsetminus Y$ (respectively, $X\smallsetminus Z$) to $1$. We denote by $\phi\colon C(r,N,p)\to B_Y$ (respectively, $\psi\colon C(r,N,p)\to B_Z$) the composition of these homomorphisms and the natural homomorphism $C(r,N,p)\to B(r,N)$. Finally, let $$\widehat\phi\colon \Cay(C(r,N,p), X)\to \Cay(B_Y, Y)$$ and $$\widehat\psi\colon \Cay(C(r,N,p), X)\to \Cay(B_Z, Z)$$ be the maps between the Cayley graphs induced by $\phi$ and $\psi$.

Fix some $k\in \NN$ and let $W$ be any word in the alphabet $\{ y_1^{\pm 1}, y_2^{\pm 1}\}$ of length
\begin{equation}\label{W2k}
\|W\|=2^k
\end{equation}
that labels a geodesic path in the Cayley graph $\Cay(B_Y, Y)$. Note that every path labelled by $W$ in $\Cay(C(r,N,p), X)$ is also geodesic. Further, by Lemma \ref{exp}, there is a constant $c\in \NN$ such that the number of periods of length at most $ck$ in Olshanskii's presentation of $B_Z$ is at least $2^k$. Let $A_1, \ldots, A_{2^k}$ be pairwise distinct periods in Olshanskii's presentation of $B_Z$ of length
\begin{equation}\label{Ai}
\| A_i\| \le ck, \;\;\; i=1, \ldots, 2^k,
\end{equation}
There exists $1\le \ell \le 2^k$ such that
\begin{equation} \label{A-side}
2^k\le \| A_1^N \ldots A_\ell^N\| \le 2^k+ckN.
\end{equation}

Let
$$
L\equiv WA_1^N \ldots A_\ell^N W^{-1} (A_1^N \ldots A_\ell^N)^{-1}.
$$
Since every $A_i^N$  belongs to the center of $C(r,N,p)$, $L=1$ in this group. Thus, the word $L$ labels a loop $p=p_1p_2p_3p_4$
in $\Cay(C(r,N,p), X)$, where
$$
\Lab(p_1)\equiv W,\;\;\; \Lab(p_2)\equiv A_1^N \ldots A_\ell^N,\;\;\; \Lab(p_3)\equiv W^{-1},\;\;\; \Lab(p_4)\equiv (A_1^N \ldots A_\ell^N)^{-1}.
$$
Denote the midpoint of $p_1$ by $o$. Note that $o$ is a vertex since $\ell(p_1)=\| W\| =s=2^k$ is even. Let $v$ be a vertex on $p_2p_3p_4$ closest to $o$, let $q$ denote a geodesic path going from $o$ to $v$ in $\Cay(C(r,N,p), X)$, and let $t=\ell(q)$. There are two cases to consider.

\begin{figure}
  \begin{center}
 \hspace*{15mm} 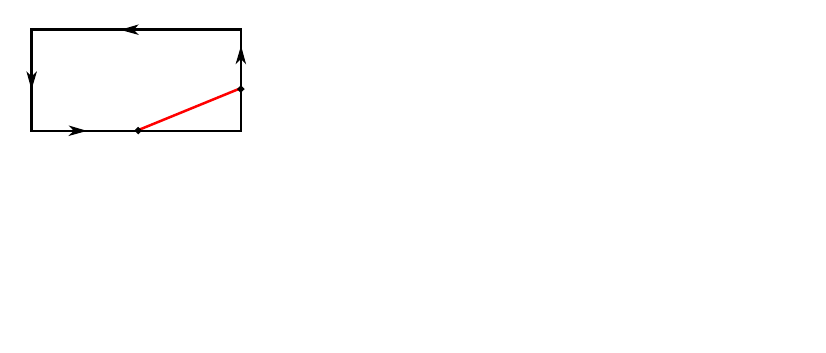
  \end{center}
  \vspace{-3mm}
  \caption{Two cases in the proof of Proposition \ref{mainL}.}\label{fig2}
\end{figure}

\smallskip
\noindent{\emph{Case 1.}}\;  Suppose first that $v$ lies on $p_2$ or $p_4$. To estimate $t$ from below, we project our configuration to $\Cay(B_Y, Y)$ via $\widehat\phi$ (see Fig. \ref{fig2}). We have $\widehat\phi(v)=\widehat\phi(p_1)_+$ (if $v\in p_2$) or $\widehat\phi(v)=\widehat\phi(p_1)_-$ (if $v\in p_4$). Since $\widehat\phi$ maps $p_1$ to a geodesic path of length $\| W\|$ in $\Cay(B_Y, Y)$ by the choice of $W$, we have $$t\ge \| W\| /2=2^{k-1}$$

\smallskip
\noindent{\emph{Case 2.}}\;  Now, assume that $v$ lies on $p_3$. Projecting this configuration to $\Cay(B_Z, Z)$ via the map $\widehat\psi$, we obtain $\widehat\psi (o)=\widehat\psi(p_2)_-$ and $\widehat\psi(v)=\widehat\psi(p_2)_+$. Therefore, $t\ge |A_1^N \ldots A_\ell^N|_Z$ in this case, where $|\cdot |_Z$ denotes the word length in $B_Z$ with respect to $Z$. Applying Lemma~\ref{qg} and using the left inequality in (\ref{A-side}), we obtain $$t\ge(\| A_1^N \ldots A_\ell^N\|/2)^{12/19}\ge 2^{12(k-1)/19}.$$

Note that the relation $L=1$ in $C(r,N,p)$ follows from the relations of the form $[A_i^N, y]=1$, where $i=1, \ldots, \ell$ and $y\in Y$. By (\ref{Ai}), we have
$$
\| [A_i^N, y]\| \le 2 \max\limits_{i=1, \ldots, \ell}\| A^N_i\|+2 \le 2ckN+2<3ckN.
$$
Hence, the loop $p_1p_2p_3p_4$ is $3ckN$-contractible. In both cases considered above, the path $(p_2p_3p_4)^{-1}$ is a $2^{12(k-1)/19}$-detour of the midpoint of $p_1$ (see Definition~\ref{det}). Assume that $C(r,N,p)$ has a linear Dehn spectrum. Applying Proposition \ref{Prop:div} and using (\ref{A-side}) and (\ref{W2k}), we obtain
$$\label{eq on k}
D\left( \frac{2^{12(k-1)/19}}{3ckN} \right)^2 \le \ell(p_2p_3p_4)\le 2\| A_1^N \ldots A_\ell^N\|+\| W\|\le  3\cdot 2^k+2ckN ,
$$
where $D$ is a constant independent of $k$. Clearly, the above inequality on $k$ is nonsense if $k$ is large enough. Thus, the Dehn spectrum of $C(r,N,p)$ is not linear, and the claim of the proposition follows from Theorem \ref{Thm:Bmn} and Theorem \ref{Thm:QI}.
\end{proof}

\begin{rem}\label{Rem:Bow}
We are not aware of any other asymptotic invariant that could be used to distinguish the quasi-isometry classes of $B(r,N)$ and $C(r,N,p)$. For example, the \emph{taut loop length spectrum} introduced by Bowditch \cite{Bow} (which is also called the \emph{relation range} in \cite{BC}) cannot be used here. Indeed, using results of \cite{book}, one can show that $B(r,N)$ and $C(r,N,p)$ are \emph{densely related} in the terminology of \cite{BC}; for $B(r,N)$, this fact can be extracted from the proof of \cite[Theorem 19.3]{book}. This means that the relation ranges of these groups are equivalent and cannot be used to distinguish their quasi-isometry classes. For definitions and details, we refer the interested reader to the papers \cite{BC,Bow}.
\end{rem}

We are now ready to prove Theorem \ref{Thm:FE}, which we state in a more detailed form here. 

\begin{thm}\label{Thm:main-full}
For any integers $r\ge 4$, $p\ge 2$, and any sufficiently large odd $N\in \NN$, there exist $2^{\aleph_0}$ $r$-generated groups of exponent $Np$ with pairwise incomparable Dehn spectra. In particular, there are $2^{\aleph_0}$ quasi-isometry classes of such groups.
\end{thm}

\begin{proof}
Lemma \ref{Cext} and Proposition \ref{mainL} allow us to apply Proposition~\ref{Prop:Cext} to the group $G=C(r,N,p)$ and the first claim of the theorem follows. The second claim follows from the first one and Theorem \ref{Thm:QI}.
\end{proof}

\begin{rem}
It is clear from the proof that the continuum many non-quasi-isometric groups of finite exponent from Theorem \ref{Thm:main-full} have an infinite central subgroup of exponent $p$.
\end{rem}

\section{Open questions}

We conclude our paper with a few open questions. 

The only examples of groups for which we know the Dehn spectrum up to equivalence are those mentioned in the introduction. It would be nice to compute the Dehn spectrum of some other groups, even finitely presented ones.
Specific examples of interest include finitely generated nilpotent groups (in which case we conjecture that $f_G(k,m,n)\sim \delta_G(n)/\delta_G(m)$, at least for free nilpotent groups), the Baumslag-Solitar groups, $SL_3(\ZZ)$, etc. 

Perhaps the most natural question is the realization problem, which is open even for finitely presented groups.

\begin{prob}
    Describe functions that can be realized, up to the equivalence relation introduced in this paper, as Dehn spectra of (a) finitely generated and (b) finitely presented groups.
\end{prob}

Furthermore, the same questions can be asked for specific classes of groups, such as solvable groups, residually finite groups, etc. Conversely, it is natural to ask how the Dehn spectrum of a group influences its algebraic properties. We state one representative question in this direction.

\begin{prob}
    Suppose that $G$ is a finitely generated group with linear Dehn spectrum. Is $G$ well-approximated (or at least approximated) by hyperbolic groups?
\end{prob}

Passing to infinitely presented groups, we suggest the following.

\begin{prob}
Compute the Dehn spectrum of the Grigorchuk group of intermediate growth.
\end{prob}

It is not very difficult to show that the Dehn spectrum of the Grigorchuk group $G$ satisfies the equation $f_G(k,m,n)\sim g(k) (n/m)^2$ for some function $g\colon \mathbb N\to \mathbb N$. However, we do not know whether the equivalence $f_G(k,m,n)\sim (n/m)^2$ holds.

Another interesting class of infinitely presented groups consists of free groups in certain varieties.

\begin{prob}
Compute the Dehn spectrum of finitely generated free solvable groups.
\end{prob}

The answer is not clear even for free metabelian groups. Further, it does not seem to be known whether the non-abelian, free solvable groups of different ranks or different solvability degrees are quasi-isometric. Computing their Dehn spectra may help resolve this question.

Finally, we formulate a natural problem related to our results on free Burnside groups.

\begin{prob}
Let $r,s\ge 2$ and let $M$, $N$ be sufficiently large natural numbers. When are the groups $B(r,M)$ and $B(s, N)$ quasi-isometric? In particular, is it true that $B(r,M)$ is not quasi-isometric to $B(s,N)$ whenever $M$ is even, and $N$ is odd?
\end{prob}

We do not believe that the latter question can be answered by computing the Dehn spectrum; another invariant is likely necessary.

\addcontentsline{toc}{section}{References}

\vspace{5mm}

\noindent \textbf{Denis Osin: } Department of Mathematics, Vanderbilt University, Nashville, TN, U.S.A.

\noindent E-mail: \emph{denis.v.osin@vanderbilt.edu}

\smallskip

\noindent \textbf{Ekaterina Rybak: } Department of Mathematics, Vanderbilt University, Nashville, TN, U.S.A.

\noindent E-mail: \emph{ekaterina.rybak@vanderbilt.edu}


\begin{thebibliography}{99}

\bibitem[AB]{AB}
J. Alonso, M. Bridson,
Semihyperbolic groups, \emph{Proc. London Math. Soc.} \textbf{70} (1995), no. 1, 56-114.

\bibitem[Adi]{Adi}
S.I. Adian, The Burnside problem and identities in groups. Ergebnisse der Mathematik und ihrer Grenzgebiete, \textbf{95}. Springer-Verlag, Berlin-New York, 1979.

\bibitem[Bat]{Bat}
M. Batty,
Groups with a sublinear isoperimetric inequality,
\emph{Irish Math. Soc. Bull.} \textbf{42} (1999), 5-10.

\bibitem[BF]{BF} M. Bestvina, K. Fujiwara, Bounded cohomology of subgroups of mapping class groups, \emph{Geom. Topol.} \textbf{6} (2002), 69-89.


\bibitem[BH]{BH} M.R. Bridson, A. Haefliger,
Metric spaces of non-positive curvature.
Grundlehren der Mathematischen Wissenschaften [Fundamental Principles of Mathematical Sciences] \textbf{319}. \emph{Springer-Verlag, Berlin}, 1999.

\bibitem[Bow]{Bow}
B. Bowditch, Continuously many quasi-isometry classes of $2$-generator groups. \emph{Comment. Math. Helv.} \textbf{73} (1998), no. 2, 232-236.

\bibitem[Bri]{Bri}
M. Bridson, Asymptotic cones and polynomial isoperimetric inequalities,
\emph{Topology} \textbf{38} (1999), no. 3, 543-554.

\bibitem[Bro]{Bro}
R. Brooks, Some remarks on bounded cohomology, Riemann surfaces and related topics.
Proceedings of the 1978 Stony Brook Conference (State Univ. New York, Stony Brook,
N.Y., 1978) (Princeton, N.J.), Ann. of Math. Stud., Vol. \textbf{97}, Princeton Univ. Press, 1981,
53-63.

\bibitem[CIOS]{CIOS}
I. Chifan, A. Ioana, D. Osin, B. Sun, Wreath-like products of groups and their von Neumann algebras II: Outer automorphisms,  \emph{Duke Math. J.}, to appear; \emph{arXiv:2304.07457.}

\bibitem[CLB]{BC}
Y. Cornulier, Yves; A. Le Boudec, Densely related groups, \emph{Ann. Fac. Sci. Toulouse Math.} 28 (2019), no. 4, 619-653.

\bibitem[Cor]{Cor}
Y. Cornulier, Compactly presented groups, arXiv:1003.3959.


\bibitem[Cou]{Cou}
R. Coulon, Infinite periodic groups of even exponents,
\emph{arXiv:1810.08372}.

\bibitem[DG]{DG}
T. Delzant, M. Gromov, Courbure m\'esoscopique et th\'eorie de la toute petite simplification,
\emph{J. Topol.} \textbf{1} (2008), no. 4, 804-836.

\bibitem[DGO]{DGO}
F. Dahmani, V. Guirardel, D. Osin, Hyperbolically embedded subgroups and rotating families in groups acting on hyperbolic spaces,  \emph{Memoirs Amer. Math. Soc.}  \textbf{245} (2017), no. 1156.

\bibitem[DK]{DK}
C. Drutu, M. Kapovich, Geometric group theory. American Mathematical Society Colloquium Publications, \textbf{63}. American Mathematical Society, Providence, RI, 2018.

\bibitem[Dru]{Dru}
C. Drutu, Quasi-isometry invariants and asymptotic cones, Internat. J. Alg. Comp. \textbf{12} (2002), no. 01n02, 99-135.

\bibitem[Ers]{Ers}
A. Erschler, Not residually finite groups of intermediate growth, commensurability and non-geometricity,
\emph{J. Algebra} \textbf{272} (2004), no. 1, 154-172.

\bibitem[Ger]{Ger}
S. M. Gersten, Dehn functions and $\ell_1$-norms of finite presentations, Proc. of a Workshop on Algorithmic Problems, edited by G. Baumslag and C. F. Miller III, Springer-Verlag, 1991.

\bibitem[GI]{GI}
R. Grigorchuk, S. Ivanov, On Dehn functions of infinite presentations of groups,
\emph{Geom. Funct. Anal.} \textbf{18} (2009), no. 6, 1841-1874.

\bibitem[Gre]{Green}
M. Greendlinger, Dehn's algorithm for the word problem,\emph{Comm. Pure Appl. Math.} \textbf{13} (1960), 67-83.

\bibitem[Gri]{Gri}
R. I. Grigorchuk, Degrees of growth of finitely generated groups and the theory of invariant means,
\emph{Math. USSR Izv.} \textbf{25} (1985), no. 2, 259-300.

\bibitem[Gro87]{Gro}
M. Gromov, Hyperbolic groups, Essays in Group Theory, MSRI Series,
Vol.8, (S.M. Gersten, ed.), Springer, 1987, 75-263.

\bibitem[Gro93]{Gro93}
M. Gromov, Asymptotic invariants of infinite groups. Geometric group theory, Vol. 2 (Sussex, 1991), 1-295. London Math. Soc. Lecture Note Ser. \textbf{182}, Cambridge Univ. Press, Cambridge, 1993.

\bibitem[Gru15]{Gru}
D. Gruber, Infinitely presented graphical small cancellation groups. PhD thesis, University of Vienna, 2015.

\bibitem[Iva]{Iva}
S. Ivanov, The free Burnside groups of sufficiently large exponents, \emph{Internat. J. Algebra Comput.} \textbf{4} (1994), no. 1-2, ii+308 pp.

\bibitem[Joh]{Joh}
D. L. Johnson, Presentations of groups.
London Mathematical Society Lecture Note Series \textbf{22}, Cambridge University Press, 1976.

\bibitem[Kec95]{Kec}
A. Kechris, {Classical descriptive set theory}. Graduate Texts in Mathematics \textbf{156}. \emph{Springer-Verlag, New York}, 1995.

\bibitem[Kec02]{Kec02}
A. Kechris, Actions of Polish groups and classification problems. Analysis and logic (Mons, 1997), 115-187,
London Math. Soc. Lecture Note Ser., \textbf{262}, Cambridge Univ. Press, Cambridge, 2002.

\bibitem[Led]{Led}
W. Lederle, Cayley-Abels graphs: Local and global perspectives, arXiv:2210.16284.


\bibitem[LS]{LS}
R. Lyndon, P. Schupp, Combinatorial group theory. Reprint of the 1977 edition. Classics in Mathematics. Springer-Verlag, Berlin, 2001.

\bibitem[Lys]{Lys}
I.G. Lysenok, Infinite Burnside groups of even period, \emph{Izv. Math.} \textbf{60} (1996), no. 3, 453-654.

\bibitem[MO]{MO}
A. Minasyan, D. Osin, Acylindrically hyperbolic groups with exotic properties, \emph{J. Algebra} \textbf{522} (2019), 218-235.

\bibitem[MOW]{MOW}
A. Minasyan, D. Osin, S. Witzel, Quasi-isometric diversity of finitely generated groups, \emph{J. Topology} \textbf{14} (2021), no. 2, 488-503.

\bibitem[NA68a]{NA1}
S.P. Novikov, S.I. Adjan, Infinite periodic groups. I. \emph{Izv. Akad. Nauk SSSR Ser. Mat.} \textbf{32} (1968), 212-244 (Russian).

\bibitem[NA68b]{NA2}
S.P. Novikov, S.I. Adjan, Infinite periodic groups. I. \emph{Izv. Akad. Nauk SSSR Ser. Mat.} \textbf{32} (1968), 251-524 (Russian).

\bibitem[NA68c]{NA3}
S.P. Novikov, S.I. Adjan, Infinite periodic groups. I. \emph{Izv. Akad. Nauk SSSR Ser. Mat.} \textbf{32} (1968), 709-731 (Russian).

\bibitem[Ols83]{Ols83}
A. Olshanskii, The Novikov-Adyan theorem, \emph{Math. USSR Sb.} \textbf{46} (1983), no. 2, 203-235.

\bibitem[Ols91a]{book}
A. Olshanskii, Geometry of Defining Relations in Groups. Mathematics and its Applications, \textbf{70}, Kluwer Academic Publishers Group, Dordrecht, 1991.

\bibitem[Ols91b]{Ols}
A. Olshanskii, Hyperbolicity of groups with linear isoperimetric inequality, \emph{Int. J. Alg. Comput.} \textbf{1} (1991), no. 3, 281-289.

\bibitem[Ols16]{Ols16}
A. Olshanskii, Embedding construction based on amalgamations of group relators. J. Topol. Anal. 8 (2016), no. 1, 1–24.

\bibitem[OOS]{OOS}
A. Olshanskii, D. Osin, M. Sapir, Lacunary hyperbolic groups, with an appendix by M. Kapovich and B. Kleiner, \emph{Geom.\& Topol.} \textbf{13} (2009) 2051-2140.

\bibitem[OS]{OS}
A. Olshanskii,  M. Sapir, Groups with non-simply connected asymptotic cones, \emph{Contemp. Math.} \textbf{394} (2006), 203-208.

\bibitem[Osi21]{Osi21}
D. Osin, A topological zero-one law and elementary equivalence of finitely generated groups,  \emph{Ann. Pure Appl. Logic} \textbf{172} (2021), no. 3, 102915, 36 pp.

\bibitem[Osi07]{Osi07}
D. Osin, Peripheral fillings of relatively hyperbolic groups, \textit{Invent. Math.} {\bf 167} (2007), no. 2,
295-326.

\bibitem[Osi06]{Osi06}
D. Osin, Relatively hyperbolic groups: Intrinsic geometry, algebraic properties, and algorithmic problems, {\it
Memoirs Amer. Math. Soc.} {\bf 179} (2006), no. 843.

\bibitem[Osi02]{Osi02}
D. Osin, Kazhdan constants of hyperbolic groups, {\it Funct. Anal. Appl.} {\bf  36} (2002), no. 4, 290-297.


\bibitem[Pan]{Pan}
P. Pansu, Croissance des boules et des g\'eod\'esiques ferm\'ees dans les nilvari\'et\'es, \emph{Ergod. Th. Dynam. Sys.} \textbf{3} (1983), 415-445.

\bibitem[Pap]{Pap}
P. Papasoglu, On the asymptotic cone of groups satisfying a quadratic isoperimetric inequality, \emph{J. Differential Geom.} \textbf{44} (1996), no. 4, 789-806.

\bibitem[Str]{Str}
R. Strebel, Small cancellation groups. Appendix to: Sur les groupes hyperboliques d\'apr\`es Mikhael Gromov (Bern, 1988), 227-273. Progr. Math. \textbf{83}, Birkh\"auser, Boston, MA, 1990.

\end{thebibliography}
\end{document}